\newcommand{\arxiv}[1]{\href{http://arxiv.org/abs/#1}{\tt arXiv:\nolinkurl{#1}}}
\newcommand{\arXiv}[1]{\href{http://arxiv.org/abs/#1}{\tt arXiv:\nolinkurl{#1}}}
\newcommand{\googlebooks}[1]{(preview at \href{http://books.google.com/books?id=#1}{google books})}
\definecolor{dark-red}{rgb}{0.7,0.25,0.25}
\definecolor{dark-blue}{rgb}{0.15,0.15,0.55}
\definecolor{medium-blue}{rgb}{0,0,.8}
\definecolor{DarkGreen}{RGB}{0,150,0}
\definecolor{darkgreen}{rgb}{0,150,0}
\definecolor{YellowOrange}{HTML}{FAA21A}
\theoremstyle{plain}
\newtheorem{thm}{Theorem}[section]
\newtheorem*{thm*}{Theorem}
\newtheorem{cor}[thm]{Corollary}
\newtheorem*{cor*}{Corollary}
\newtheorem*{conj*}{Conjecture}
\newtheorem{lem}[thm]{Lemma}
\newtheorem{prop}[thm]{Proposition}
\newtheorem*{quest*}{Question}
\newtheorem*{claim*}{Claim}
\theoremstyle{definition}
\newtheorem{defn}[thm]{Definition}
\newtheorem{construction}[thm]{Construction}
\newtheorem{sub-ex}[thm]{Sub-Example}
\newtheorem{rem}[thm]{Remark}
\newtheorem*{rem*}{Remark}
\newtheorem{remark}[thm]{Remark}
\DeclareMathOperator{\End}{End}
\DeclareMathOperator{\ev}{ev}
\DeclareMathOperator{\Hom}{Hom}
\DeclareMathOperator{\op}{op}
\DeclareMathOperator{\id}{id}
\newcommand{\comment}[1]{}
\newcommand{\be}{\begin{enumerate}[label=(\arabic*)]}
	\newcommand{\ee}{\end{enumerate}}
\newcommand{\set}[2]{\left\{#1 \middle| #2\right\}}
\def\semicolon{;}
\def\applytolist#1{
	\expandafter\def\csname multi#1\endcsname##1{
		\def\multiack{##1}\ifx\multiack\semicolon
		\def\next{\relax}
		\else
		\csname #1\endcsname{##1}
		\def\next{\csname multi#1\endcsname}
		\fi
		\next}
	\csname multi#1\endcsname}
\def\calc#1{\expandafter\def\csname c#1\endcsname{{\mathcal #1}}}
\def\bbc#1{\expandafter\def\csname bb#1\endcsname{{\mathbb #1}}}
\def\bfc#1{\expandafter\def\csname bf#1\endcsname{{\mathbf #1}}}
\def\sfc#1{\expandafter\def\csname s#1\endcsname{{\sf #1}}}
\def\fc#1{\expandafter\def\csname f#1\endcsname{{\mathfrak #1}}}
\newcommand{\Rep}{{\sf Rep}}
\newcommand{\Mod}{{\sf Mod}}
\renewcommand{\Vec}{{\sf Vec}}
\newcommand{\fdVec}{{\sf Vec_{fd}}}
\newcommand{\sVec}{{\sf sVec}}
\newcommand{\TwoVec}{{\sf 2Vec}}
\newcommand{\ThreeVec}{{\sf 3Vec}}
\newcommand{\Slice}{{\sf Slice}}
\newcommand{\TwoSlice}{{\sf 2Slice}}
\newcommand{\Alg}{{\sf Alg}}
\newcommand{\TwoAlg}{{\sf 2Alg}}
\newcommand{\noshow}[1]{}
\tikzset{vertex/.style = {shape=circle,draw,fill=black,inner sep=0pt,minimum size=5pt}}
\tikzset{edge/.style = {->,> = latex', bend right}}
\tikzset{
	super thick/.style={line width=3pt}
}
\tikzset{
	quadruple/.style args={[#1] in [#2] in [#3] in [#4]}{
		#1,preaction={preaction={preaction={draw,#4},draw,#3}, draw,#2}
	}
}
\tikzstyle{shaded}=[fill=red!10!blue!20!gray!30!white]
\tikzstyle{unshaded}=[fill=white]
\tikzstyle{empty box}=[circle, draw, thick, fill=white, opaque, inner sep=2mm]
\tikzstyle{annular}=[scale=.7, inner sep=1mm, baseline]
\tikzstyle{rectangular}=[scale=.75, inner sep=1mm, baseline=-.1cm]
\tikzstyle{late>}=[decoration={markings, mark=at position 0.75 with {\arrow{>}}}, postaction={decorate}]
\tikzstyle{late<}=[decoration={markings, mark=at position 0.75 with {\arrow{<}}}, postaction={decorate}]
\tikzstyle{mid>}=[decoration={markings, mark=at position 0.5 with {\arrow{>}}}, postaction={decorate}]
\tikzstyle{mid<}=[decoration={markings, mark=at position 0.5 with {\arrow{<}}}, postaction={decorate}]
\tikzstyle{over}=[double, draw=white, super thick, double=]
\tikzstyle{box} = [rectangle,draw,rounded corners=5pt,very thick]
\tikzset{Rightarrow/.style={double equal sign distance,>={Implies},->},
	triple/.style={-,preaction={draw,Rightarrow}},
	quadruple/.style={preaction={draw,Rightarrow,shorten >=0pt},shorten >=1pt,-,double,double
		distance=0.2pt}}
\tikzstyle{knot}=[preaction={super thick, white, draw}]
\newcommand{\roundNbox}[6]{
	\draw[rounded corners=5pt, very thick, #1] ($#2+(-#3,-#3)+(-#4,0)$) rectangle ($#2+(#3,#3)+(#5,0)$);
	\coordinate (ZZa) at ($#2+(-#4,0)$);
	\coordinate (ZZb) at ($#2+(#5,0)$);
	\node at ($1/2*(ZZa)+1/2*(ZZb)$) {#6};
}
\newcommand{\tikzmath}[2][]
{\vcenter{\hbox{\begin{tikzpicture}[#1]#2
	\end{tikzpicture}}}
}
\begin{document}
	
	\title{Tannaka-Krein reconstruction for fusion 2-categories}
	\author{David Green}
	\date{\today}
	\maketitle
	\begin{abstract}
		We reprove the classical Tannaka-Krein reconstruction theorem by finding a monoidal equivalence of categories between a 1-truncated sub-2-category of the slice 2-category $\Mod(\Vec)/\Vec$ and the category of algebras. We then immediately generalize this approach to find a monoidal equivalence of 2-categories between a 2-truncated sub-3-category of the slice 3-category $\Mod(\TwoVec)/\TwoVec$ and the category of algebras.
		  
		As an immediate consequence, a finite semisimple 2-Hopf algebra $C$ can be recovered from its fusion 2-category of modules together with the monoidal fiber 2-functor to $\TwoVec$. Moreover, every fusion 2-category equipped with a monoidal functor to $\TwoVec$ is of this form.
	\end{abstract}
	\section{Introduction} 
	The classical Tannaka reconstruction theorem \cite{Tannaka} recovers a compact topological group from its category of finite dimensional complex representations and forgetful functor $F$. The counterpart to this theorem, due to Krein \cite{kreuin1949principle} recovers a pair $(C, F)$ of a semisimple linear category and faithful functor $F$ as the representations of $\End(F)$.
	These ideas have been subsequently generalized by many authors to a variety of settings, both with and without analytic concerns. See for example \cite{ Deligne2007, Woronowicz1988, MR1010160}.  Of particular interest is the case when $H$ is a semisimple Hopf algebra, $C = \Mod(H)$ is its fusion category of representations, together with forgetful functor to the category of vector spaces. The text \cite{Joyal1991ANIT} contains an account of the applications to mathematics. 
	
	The fiber functor $F$ plays an essential role; without it (given only the \textit{equivalence class} of $\Mod(H)$), one reconstructs a Morita equivalent object not necessarily isomorphic to $H$. Despite the usefulness of Morita theory (including in this work), there is a natural desire to reconstruct the original object up to isomorphism. Meanwhile, in physical contexts, one expects the fiber functor to describe an explicit symmetry breaking process \cite{PhysRevResearch.2.043086}.
	
	A vertical categorification of the reconstruction theorem requires two definitions. First, we need the higher version of a fusion category; the \textit{fusion 2-category} originally defined in \cite{DR18} and subsequently reformulated in \cite{weakfusion}. This notion has already been applied in both  mathematics to solve the minimal nondegenerate extensions problem \cite{JFR23} and in physics to construct examples of (3 + 1)D topological quantum field theories (TQFTs) \cite{PhysRevB.96.045136, MR4239374}.
	
	Second, we need a higher dimensional version of Hopf algebras: \textit{Hopf categories}.  The definition and representation theory of Hopf categories has also already received attention in mathematics  \cite{DAY199799, neuchl, chen2023categorified} and physics, again to construct (3 + 1)D TQFT's \cite{crane1994}. 

	While pieces have appeared \cite{ Schauenburg1992TannakaDF, DAY199799,Pfeiffer_2007,huang2023tannakakrein,}, there has not yet been a full reconstruction theorem for fusion 2-categories.  We prove one as Theorem \ref{2HopfReconstruction}. In order to state the theorem, we use a higher slice category $\ThreeVec/\TwoVec$. Objects of this category are semisimple 2-categories equipped with a functor into $\TwoVec$, 1-morphisms are pairs consisting of a functor and a natural transformation making the obvious triangle commute, and higher morphisms are defined similarly. 
	
	\begin{thm*}
	There is a symmetric monoidal equivalence, contravariant at the level of 1-morphisms, between the full subcategory of $\ThreeVec/\TwoVec$ consisting of locally faithful functors and the 2-category of 2-Hopf Algebras.
	The natural transformations associated to this equivalence reconstruct a semisimple Hopf category from its fusion 2-category of representations and fiber functor, and a fusion 2-category with fiber functor $F$ from the Hopf category $\End(F)$.
	\end{thm*}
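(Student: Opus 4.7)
The plan is to build on the monoidal equivalence between a sub-2-category of $\Mod(\TwoVec)/\TwoVec$ and the 2-category of 2-algebras in $\TwoVec$ that the preceding sections establish, and then enhance it with the symmetric monoidal structure that packages the comultiplication, counit, and antipode data of a 2-Hopf algebra.

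In the first layer, I would construct explicit functors in each direction. Given a locally faithful $F\colon \mathcal{C} \to \TwoVec$, form the 2-algebra $\End(F)$ of endomorphisms of $F$ as a 2-functor, with algebra structure under vertical composition. In the other direction, send a 2-algebra $A$ to the pair $(\Mod(A), \Forget)$. To see these are mutually inverse it suffices to verify $A \simeq \End(\Forget_A)$ --- via the tautological action of $A$ on itself --- and that the comparison 2-functor $\mathcal{C} \to \Mod(\End(F))$ is an equivalence over $\TwoVec$. The latter is exactly where local faithfulness is used: faithfulness on 2-morphisms makes the comparison fully faithful, and a density argument for $\End(F)$-modules finishes essential surjectivity.

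To promote this to a symmetric monoidal equivalence landing in 2-Hopf algebras, I would equip both sides with their natural symmetric monoidal structures: on the slice, the tensor product descends from the Deligne tensor product $\boxtimes$ of 2-categories together with the fact that $\TwoVec \boxtimes \TwoVec \to \TwoVec$ is the monoidal product of $\TwoVec$; on the algebra side, it is the tensor product of 2-algebras. Transporting the symmetric monoidal structure across the equivalence forces extra structure on $\End(F)$ beyond the underlying algebra: a comultiplication encoding how the monoidal product of $\TwoVec$ interacts with $F$, a counit from the unit object, and an antipode arising from the rigidity structure intrinsic to $\TwoVec$. Together these assemble into a 2-Hopf algebra. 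The contravariance at the 1-morphism level is tracked by observing that restriction of modules along an algebra map $A \to B$ corresponds to a morphism $(\Mod(B), \Forget) \to (\Mod(A), \Forget)$ in the slice going in the opposite direction.

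The principal obstacle I expect is coherence at the 3-categorical level: checking that all the structure isomorphisms and modifications match up to give a genuine \emph{symmetric monoidal} equivalence of 2-categories, rather than merely a bijection on equivalence classes with some algebra homomorphism data. A related subtlety is that the 2-Hopf antipode axioms involve 2-morphism data which must be verified to correspond, under the equivalence, to the appropriate duality 2-morphisms on the slice side. The reconstruction statements about Hopf categories and fusion 2-categories at the end then follow directly by unpacking the unit and counit of the established equivalence on the objects $(\Mod(A), \Forget)$ and $(\mathcal{C}, F)$ respectively.
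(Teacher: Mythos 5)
There is a genuine gap in the second layer of your argument, at the point where you claim that ``transporting the symmetric monoidal structure across the equivalence forces extra structure on $\End(F)$: a comultiplication \dots, a counit \dots, and an antipode arising from the rigidity structure intrinsic to $\TwoVec$.'' The symmetric monoidal structure on the slice does not, by itself, endow a generic object with any coalgebra structure: a bare locally faithful $\cF\colon\cC\to\TwoVec$ reconstructs only an \emph{algebra}. The comultiplication on $\End(\cF)$ exists precisely because $(\cC,\cF)$ is a \emph{monoid object} in the slice, which requires $\cC$ monoidal and $\cF$ a monoidal 2-functor; identifying the monoid objects in the slice with monoidal 2-functors is a substantial step in its own right (in the paper it is Proposition \ref{2MonoidIn2Slice}, a term-by-term comparison of the algebra-object axioms with the monoidal-2-functor axioms). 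Likewise the antipode comes from the rigidity of $\cC$, not of $\TwoVec$: one whiskers $\End(\cF)$ with the dual 2-functor of $\cC$ and verifies the antipode axiom by an explicit graphical computation. If rigidity of $\TwoVec$ sufficed, every monoidal $(\cC,\cF)$ would yield a Hopf algebra rather than a bialgebra, contradicting the fact that bialgebra reconstruction (Corollary \ref{2BialgebraReconstruction}) is a strictly weaker statement than Hopf reconstruction (Theorem \ref{2HopfReconstruction}), which restricts to fusion 2-categories.

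A second, smaller gap is in your first layer: local faithfulness of $\cF$ does not ``make the comparison fully faithful.'' Injectivity of $\cF$ on 2-morphisms gives nothing about surjectivity of $\Hom_\cC(c,c')\to\Hom_{\End(\cF)}(\cF c,\cF c')$; that surjectivity is the content of a categorified double centralizer theorem. The paper's route is Morita-theoretic: $\Mod(-)$ is a triequivalence from the Morita 3-category, local faithfulness makes the module $_\cC\cM_{\TwoVec}$ faithful, and the bimodule $_\cC\cM_{\End(\cM)}$ induces a Morita equivalence if and only if the bicommutant of $\cC$ is $\cC$, which is the categorified bicommutant theorem. Your ``density argument'' would need to be fleshed out into exactly this, so you should name it as the load-bearing input rather than attributing the equivalence to faithfulness alone.
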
	
	This proves the 2-categorical case of the  conjecture of Baez and Neuchl \cite[\S1.1]{Baez1995HigherDA}.
	 We remark that with the recent result \cite{DecYu}, which classifies the  fusion 2-categories admitting fiber functors, a complete classification of Hopf categories is possible.
	 One immediate consequence of the work \cite{DecYu} is that all fusion 2-categories admitting a fiber functor to 2-Vec, and thus all finite semisimple Hopf 1-categories are group-theoretical. This situation differs from the case for fusion 1-categories and Hopf algebras, where non group-theoretical examples are known to exist \cite{MR2480712}.  
	The reconstruction theorem likely remains true in far more generality, as it does for 1-categories: beyond the semisimple case, in $\cV$-enriched contexts, for non-coassociative bialgebras corresponding to fiber functors which fail to be monoidal. This last generalization gives ``weak'' Hopf 1-category reconstruction. We confine our interest to the unenriched case with a monoidal functor, in order to avoid many technical difficulties relating to enrichment such as axiomatizing enriched monoidal 2-categories. This has the disadvantage of excluding the interesting and important case when the target is $\Mod(\sVec)$, which we hope to remedy in the near future. 
	
	Section \ref{AgnosticSection} provides the conceptual underpinning of the work, containing an outline of an approach to Tannaka-Krein reconstruction for fusion $n$-categories and highlighting which results are needed to push our method through. Sections \ref{1CategoryReconstructionSection} and \ref{2CategoryReconstructionSection} contain full proofs of the reconstruction theorem using the outlined approach for fusion 1- and 2- categories respectively, with certain details and computations pushed to the appendix. 
	
	 In particular, the proofs of reconstruction for 1-/2-categories we give are both \textit{natural} and \textit{monoidal}, i.e, the reconstruction procedures are the components of natural transformations associated to certain monoidal equivalences of monoidal 1-/2-categories. While this perspective may be independently useful in the future, it has the immediate advantage of avoiding the need to check certain coherence relations. For instance, when reconstructing a Hopf algebra from its category of representations and forgetful functor, the comultiplication on the reconstructed algebra is immediately seen to be both coassociative and an algebra morphism.  \\

	\subsection{Prior work, enrichment, and the many object case}
	This subsection is not intended to be rigorous, but to expand upon an instance of the ``categorical ladder''. The basic reconstruction theorem recalled in Section \ref{1CategoryReconstructionSection} has been generalized across a series of articles \cite{DAY199617, DAY199799, Coalgebroids2000BalancedCP, McCrudden2000CategoriesOR,  Pfeiffer_2007} to the ``many object case enriched in $V$'', where $V$ is any symmetric monoidal 1-category. As $V$ is symmetric monoidal, it is naturally a symmetric monoid in $\Mod(V)$, so that $\Mod(V)/V$ is again symmetric monoidal. 
	
	 Following these authors, we may thus view a Tannaka-Krein style reconstruction theorem as an \textit{equivalence of monoidal categories} from a given subcategory of $\Mod(V)/V$ to a category of algebraic objects. We say the subcategory of $\Mod(V)/V$ \textit{reconstructs} the category of algebraic objects. Then we can summarize the situation in the following list: 
	 \begin{itemize}
		\item The full subcategory of consisting faithful functors reconstructs the category of algebras in $V$.
		\item The full subcategory of consisting of faithful and monoidal functors reconstructs the category of bialgebras in $V$.
		\item $\Mod(V)/V$ reconstructs the category of ``Hopf Algebroids \cite{DAY199799}''.
	 \end{itemize}
	We could add many more items to this list, by adding adjectives to the domain subcategory of $\Mod(V)$ and reconstructed algebraic category in pairs. Examples include rigid/Hopf, braided/quasitriangular and others (including $C^*$).
	
	In general, $\Mod(V)/V$ is a monoidal 2-category, but certain cases of interest (the first and second items above) have the property that the corresponding full subcategory of $\Mod(V)/V$ is 1-truncated. This makes the reconstruction procedure much easier, as less complex axioms capture the full behavior of $\Mod(V)/V$. 
	
	We now climb the categorical ladder to obtain the following list, with $\cV$ a symmetric monoidal 2-category. As before, $\cV$ is a symmetric monoid in $\Mod(\cV)$ which induces a monoidal structure on $\Mod(\cV)/\cV$.
 \begin{itemize}
	\item The full subcategory of consisting locally faithful 2-functors reconstructs the category of algebras in $V$.
	\item The full subcategory of consisting of faithful and monoidal 2-functors reconstructs the category of bialgebras in $V$.
\end{itemize}
 	Here, $\Mod(\cV)/\cV$ is in general a monoidal 3-category. Axiomatizing such objects (or Hopf 2-algebroids) is beyond the scope of this work (the definition as a 1-object tetracategory in the sense of Trimble \cite{trimbleTetra} works fine for objects, but the correct notion of pointed 4-functors and higher cells is currently unknown to the author). However, the subcategories of $\Mod(\cV)/\cV$ corresponding to (bi)algebras are again truncated, and so the existing axiomatization  of symmetric monoidal 2-categories, 2-functors and their associated higher morphisms is sufficient to make the second and third columns above rigorous.
	\subsection*{Acknowledgements}
	I would like to thank Dave Penneys for his guidance and patience, as well as Thibault D\'eccopet, Giovanni Ferrer, Niles Johnson, Theo Johnson-Freyd, and Sean Sanford for many helpful conversations. The author was supported by NSF grants DMS 1547357, DMS 1654159, and DMS 2154389.
	\subsection{The Reconstruction Procedure} \label{AgnosticSection}
	Here we provide a ``height-agnostic'' approach to bialgebra/Hopf algebra reconstruction. This section informs the approach of the remainder of this article, but is not required. We let $\cV$ be a semisimple symmetric monoidal $n$-category. By induction, as well as the existence of a universal target for symmetric monoidal functors \cite{JFRUpcoming}, $\cV$ admits a fiber functor and so we may assume that $\cV$ is a subcategory of the category of semisimple modules for an $n$-algebra in the universal target. The cases we consider in this article are $\cV = \Vec$ and $\cV = \TwoVec$, providing two levels of reconstruction. 
	\begin{enumerate}
		\item Form the full sub $(n+1)$-category ${\sf{nSlice}}_\cV$ from the slice $(n+1)$-category $\sf{\Mod(\cV)}/\sf{\cV}$ consisting of faithful $\cV$-functors. This full sub $(n+1)$-category is $n$ truncated, i.e equivalent to an $n$-category. 
		\item Show that an appropriate Deligne tensor product induces a monoidal $\cV$-category structure on ${\sf{nSlice}}_\cV$, equivalent to the natural structure given by the monoid structure of $\cV$ as a module over itself.  
		\item Recognize that the assignment $A \mapsto (\Rep(A), \textbf{Forget}_A)$ is a monoidal $\cV$-functor $\sf{(n-1)Alg}_\cV \to {\sf{nSlice}_\cV}$, contravariant at the level of 1-morphisms.
		\item Show the assignment $(C, F) \to \End(F)$ is a monoidal $n$-functor $\End(-) \colon \sf{nSlice}_\cV\to \sf{Alg}_\cV$, contravariant at the level of 1-morphisms, and that these two functors are inverse equivalences. 
		\item Conclude the category $\cM$ of algebra objects in ${\sf{nSlice}_\cV}$ is equivalent to the category of $n$-bialgebras. 
		\item Show any pair $(C, F)$, where $F$ is a monoidal functor, is canonically an algebra object in ${\sf{nSlice}_\cV}$, so that it's image $\End(F)$ is a bialgebra object (bialgebra reconstruction). 
		\item Construct duality morphisms compatible with the equivalence between $\cM$ and the category of bialgebras, realizing the equivalence between the $n$ categories of fusion $\cV$-module categories with fiber functor and semisimple Hopf $n$-algebras. 
	\end{enumerate}
 Some requirements of the above approach (beyond locating correct \textit{and} manageable definitions of the objects under consideration) are the following:
	\begin{itemize}
	\item Step 1 requires a theory of $n$ categories enriched in a symmetric monoidal $n-1$ category $\cV$, in particular so that $\cV$ module categories are $\cV$-categories. Moreover, for the first part of step 4 to be well defined, the endomorphisms of a $\cV$-functor must again lie in $\cV$.
	\item The content of step 4 in one direction is essentially the $\cV$-Yoneda lemma. The other direction should follow once it is known that $\Mod(-)$ is an equivalence from the Morita $(n +1)$-category of algebras in $\cV$ to $\Mod(\cV)$, together with a categorification of the double centralizer theorem. 
	\end{itemize}
	\subsection{Notation, Conventions and Truncation}
	\begin{itemize}
		\item Objects in a 1-, or 2- category will be denoted $c, c', ...$, always in Roman lowercase.
		\item Morphisms in a 1- or 2- category will be denoted $f, f', ...$, generally in Roman lowercase. 
		\item 1-categories, 1-functors will be denoted by capital roman letters, $C, C',...$ for categories and $F, F', \dots$ for functors. For categories of functors, this rule will take precedence.
		\item 2-morphisms and natural transformations will be generally denoted by lowercase Greek letters. 
		\item 3-morphisms and modifications will be generally denoted by capital Greek letters.
		\item 2-categories and 2-functors will be given capital script lettering $(\cC, \cC', \dots)$. 
		\item We use the oplax convention for natural transformations.
		\item We use the coherence theorem for 2-categories \cite[Theorem 3.6.6]{johnson20202dimensional} to suppress compositors and unitors. 
	\end{itemize}
	 All categories, functors, transformations, and modifications are linear over an algebraically closed field $\bbK$. 
	 We will frequently work with structures in $n-1$ truncated $n$-categories. We verify the axioms for these structures only up to a necessarily unique, invertible, $n$ cell. This includes the composition functors. We will also repeatedly refer to the following objects:
	 \begin{itemize}
	 	\item $\TwoVec$, the 2-category of finite semisimple $\bbK$-linear 1-categories, functors, and natural transformations.
	 	\item $\ThreeVec$, the 3-category of finite semisimple $\bbK$-linear 2-categories, functors, natural transformations, and modifications.
	 	\item $\TwoAlg$, the 2-category of algebra objects in $\TwoVec$, which is equivalent(\cite{1509.06811}) to the 2-category of monoidal categories, monoidal functors, and monoidal natural transformations.  
	 \end{itemize}
	 
	\section{Reconstruction for 1-categories} \label{1CategoryReconstructionSection}
	In this section we provide a proof of the reconstruction theorem for fusion 1-categories in terms of finite dimensional semsimple Hopf algebras. The purpose of this section is to emphasize an approach which is both natural (i.e, proves a statement about the \textit{category} of Hopf algebras) and categorifiable, so as to serve as a reference for the later sections of this paper. 
	\subsection{Monoidal Slice Categories and Coalgebra structure} 
	We define the 2-category $\Slice$ as the full subcategory of the slice 2-category $\TwoVec/\Vec$ consisting of faithful functors. Unpacked, the category $\Slice$ has:
	\begin{itemize}
		\item Objects given by pairs $(C, F)$ where $C$ is a finite semisimple linear 1-category and $F \colon C \to \Vec$ is a linear functor, which is injective on $\Hom$-sets (faithful).
		\item Morphisms $(C_1, F_1)$ to $(C_2, F_2)$ are pairs $(T, \tau)$ where $T \colon C_1 \to C_2$ is a linear functor and $\tau \colon F_1 \to F_2 \circ T$ is a natural isomorphism. 
		\item 2-morphisms $(T, \tau) \Rightarrow (T', \tau')$ are natural transformations $\sigma \colon T \to T'$ satisfying the ``ice cream cone" condition: 
		
		\begin{equation}\label{IceCreamCondition1Cat}\begin{tikzcd} 
				{C_1} && {C_2} && {C_1} && {C_2} \\
				&&& {=} \\
				& \Vec &&&& \Vec
				\arrow[""{name=0, anchor=center, inner sep=0}, "{F_1}"', from=1-1, to=3-2]
				\arrow["{F_2}", from=1-3, to=3-2]
				\arrow[""{name=1, anchor=center, inner sep=0}, "{F_1}"', from=1-5, to=3-6]
				\arrow["{F_2}", from=1-7, to=3-6]
				\arrow[""{name=2, anchor=center, inner sep=0}, "{T}"', curve={height=12pt}, from=1-1, to=1-3]
				\arrow[""{name=3, anchor=center, inner sep=0}, "{T'}", curve={height=-12pt}, from=1-1, to=1-3]
				\arrow["{T'}", from=1-5, to=1-7]
				\arrow["\sigma"', shorten <=3pt, shorten >=3pt, Rightarrow, from=2, to=3]
				\arrow["{\tau}"', shift right=3, shorten <=8pt, shorten >=17pt, Rightarrow, from=0, to=1-3]
				\arrow["{\tau'}"', shorten <=8pt, shorten >=17pt, Rightarrow, from=1, to=1-7]
			\end{tikzcd}.
		\end{equation}
		At $c \in C_1$ the above pasting diagram has the equational form
		\begin{equation} \label{IceCreamEquationOnLegs}
			\tau'_c = F_2(\sigma_{c})\tau_c
		\end{equation}
		\item Composition is given by pasting.
	\end{itemize}
	A 1-category is said to be \textit{balanced} if every morphism which is both monic and epic is an isomorphism. All abelian, and thus all semisimple categories, are balanced. 
	\begin{lem}\label{FaithfulBalancedImpliesConservative}
	Any faithful functor with balanced domain reflects isomorphisms, i.e $F(f)$ is an isomorphism if and only if $f$ is.  
	\end{lem}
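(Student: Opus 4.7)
The plan is to prove the two directions separately. The forward direction, that $f$ being an isomorphism implies $F(f)$ is an isomorphism, is immediate since any functor preserves isomorphisms: apply $F$ to the identity $ff^{-1}=\id$ and $f^{-1}f=\id$.

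For the reverse direction, suppose $F(f)$ is an isomorphism. Since the domain $C$ is balanced, it suffices to show that $f$ is both monic and epic. I would show monic first: suppose $f\circ g = f\circ h$ for some morphisms $g,h$ in $C$. Applying $F$ gives $F(f)\circ F(g) = F(f)\circ F(h)$, and since $F(f)$ is an isomorphism, it is in particular monic in $\Vec$, so $F(g)=F(h)$. Faithfulness of $F$ then forces $g=h$, establishing that $f$ is monic. The epic case is symmetric: if $g\circ f = h\circ f$, post-composition with the inverse of $F(f)$ in $\Vec$ (or equivalently using that $F(f)$ is epic) gives $F(g)=F(h)$, and faithfulness yields $g=h$.

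Combining the two, $f$ is both monic and epic in $C$, and balancedness of $C$ produces the desired inverse. I do not anticipate any serious obstacle here; the only subtlety is to remember that one needs $F(f)$ isomorphism (not merely, say, $F(f)$ monic) in order to use it as an epimorphism on the other side, which is why the hypothesis is stated as reflecting isomorphisms rather than as reflecting monos or epis individually.
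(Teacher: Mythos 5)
Your proof is correct and follows essentially the same route as the paper: the paper's proof simply invokes the standard fact that a faithful functor reflects monics and epics, which is exactly the argument you spell out with the $f\circ g = f\circ h$ cancellation, and then concludes by balancedness. No gaps.
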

	\begin{proof}
	Suppose $F(f)$ is an isomorphism. Since $F$ is faithful, $F$ reflects both monics and epics. Therefore $f$ is monic and epic, and as the domain is balanced, $f$ is an isomorphism.  
	\end{proof}
	\begin{lem} \label{1Slice1Truncated}
		$\Slice$ is $1$-truncated, i.e there is at most one 2-morphism $(T, \tau) \Rightarrow (T', \tau')$, and it is invertible if it exists. 
	\end{lem}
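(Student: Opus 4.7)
The plan is to deduce both uniqueness and invertibility of such a $\sigma$ directly from the pointwise equation \eqref{IceCreamEquationOnLegs}, using that each $\tau_c$, $\tau'_c$ is already an isomorphism and that $F_2$ is faithful with balanced (semisimple) target.

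For uniqueness, I would take two candidate 2-morphisms $\sigma, \sigma' \colon T \Rightarrow T'$ both fitting into \eqref{IceCreamCondition1Cat}. Evaluating \eqref{IceCreamEquationOnLegs} at each object $c \in C_1$ gives $F_2(\sigma_c)\tau_c = \tau'_c = F_2(\sigma'_c)\tau_c$. Right-canceling the isomorphism $\tau_c$ yields $F_2(\sigma_c) = F_2(\sigma'_c)$, and faithfulness of $F_2$ then forces $\sigma_c = \sigma'_c$ for every $c$. So $\sigma = \sigma'$.

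For invertibility, given any $\sigma$ satisfying \eqref{IceCreamEquationOnLegs}, I would solve for $F_2(\sigma_c) = \tau'_c \tau_c^{-1}$, which is a composite of isomorphisms in $\Vec$. Thus $F_2$ sends $\sigma_c$ to an isomorphism, and since $C_2$ is semisimple (hence balanced) and $F_2$ is faithful, Lemma \ref{FaithfulBalancedImpliesConservative} applies to give that $\sigma_c$ itself is an isomorphism in $C_2$. As this holds for every object $c$, the natural transformation $\sigma$ is a natural isomorphism.

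Neither step appears to be a real obstacle: the entire argument is a one-line application of cancellation followed by one invocation of the preceding lemma. The only mild subtlety worth flagging is the implicit use that $\tau_c$ is invertible (built into the definition of a 1-morphism of $\Slice$, since $\tau$ is required to be a natural isomorphism) and that $F_2$, being faithful with semisimple domain $C_2$, reflects isomorphisms — both of which are exactly what Lemma \ref{FaithfulBalancedImpliesConservative} was set up to provide.
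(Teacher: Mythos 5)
Your proof is correct and follows essentially the same route as the paper: rearrange \eqref{IceCreamEquationOnLegs} to exhibit $F_2(\sigma_c)$ as a composite of isomorphisms, use faithfulness of $F_2$ for uniqueness, and invoke Lemma \ref{FaithfulBalancedImpliesConservative} for invertibility. (Minor slip: in your opening paragraph you say ``balanced (semisimple) \emph{target}'' where you mean the domain $C_2$ of $F_2$, as you correctly state later.)
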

	\begin{proof}
		We use notation as in \eqref{IceCreamCondition1Cat}.
		Since $\tau'$ is invertible, we rearrange \eqref{IceCreamEquationOnLegs} to $\tau_c^{-1}\tau'_c = F_2(\sigma_{c})$. Since $F_2$ is faithful and $C_2$ is semisimple, $\sigma_{c}$ is uniquely determined and is moreover an  isomorphism by the previous lemma. 
	\end{proof}
	We observe that $T$ must be faithful as well. With the above lemma in mind, when defining monoidal structures on $\Slice$, we will verify the axioms for a monoidal 1-category (up to isomorphism), and likewise for monoids. The category $\Slice$ is a natural target for the (contraviariant) functor $\Mod(-) \colon \Alg \to \Slice$, defined by $A \mapsto (\Mod(A), \textbf{Forget})$. There is another map in the opposite direction:
	\begin{lem}  The assignments \label{EndFunctorWellDefined}
		\begin{itemize} 
			\item $Q(C, F) \coloneqq \End(F)$
			\item $Q(T, \tau)(\eta_2) \coloneqq \tau^{-1} \circ (\eta_2 \circ T) \circ \tau$
			\item $Q(\sigma) = \id_{Q(T, \tau)}$
		\end{itemize}
		provide a well defined, contravariant functor $Q \colon \Slice \to \sf{Alg}$, the 1-category of algebras. 
	\end{lem}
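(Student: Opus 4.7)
The proof breaks into checking well-definedness of the three assignments (on objects, 1-morphisms, and 2-morphisms) and then verifying contravariant functoriality. The object assignment is immediate: $\End(F)$ is a linear algebra under vertical composition of natural transformations, with the identity transformation as unit.

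For the 1-morphism assignment, given $(T, \tau) \colon (C_1, F_1) \to (C_2, F_2)$ and $\eta_2 \in \End(F_2)$, the composite $\tau^{-1} \circ (\eta_2 \circ T) \circ \tau$ is a natural transformation $F_1 \Rightarrow F_1$ because $\eta_2 \circ T$ is a natural endotransformation of $F_2 T$ and $\tau$ is an invertible transformation $F_1 \Rightarrow F_2 T$. To see $Q(T,\tau)$ is an algebra homomorphism, one checks it sends the identity to the identity (trivially) and preserves multiplication via the telescoping $\tau^{-1} \eta \tau \circ \tau^{-1} \eta' \tau = \tau^{-1} (\eta \circ \eta') \tau$.

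The 2-morphism assignment is the only step that requires a real computation. Since $\Alg$ is an ordinary 1-category, to have $Q(\sigma) = \id$ be well defined it suffices to verify that whenever a 2-morphism $\sigma \colon (T, \tau) \Rightarrow (T', \tau')$ exists, $Q(T, \tau) = Q(T', \tau')$ as algebra homomorphisms. Evaluated at a component $c$, the ice cream cone condition \eqref{IceCreamEquationOnLegs} gives $\tau'_c = F_2(\sigma_c)\tau_c$, and naturality of $\eta_2 \colon F_2 \Rightarrow F_2$ applied to $\sigma_c \colon T(c) \to T'(c)$ gives $F_2(\sigma_c)(\eta_2)_{T(c)} = (\eta_2)_{T'(c)} F_2(\sigma_c)$. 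Substituting the first identity into the conjugation defining $Q(T', \tau')(\eta_2)_c$ and then applying naturality collapses it to $\tau_c^{-1}(\eta_2)_{T(c)}\tau_c$, which is $Q(T, \tau)(\eta_2)_c$.

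Contravariant functoriality is then a direct pasting computation: the identity 1-morphism $(\id_C, \id)$ yields conjugation by the identity, which acts trivially, and for composable pairs $(T_1, \tau_1)$ and $(T_2, \tau_2)$ one checks that conjugating $\eta$ first by $\tau_2 \circ T_1$ and then by $\tau_1$ matches conjugation by the composite $(\tau_2 \circ T_1)\tau_1$, which is the 2-cell component of the composite 1-morphism in $\Slice$. The order of the composites in $\Alg$ reverses that in $\Slice$, yielding contravariance. The only place nontrivial input appears is the 2-morphism compatibility via the naturality square for $\eta_2$; this is the subtlest point but poses no real obstacle.
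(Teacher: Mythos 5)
Your proposal is correct and follows essentially the same route as the paper: well-definedness on 2-morphisms via the ice cream cone identity $\tau'_c = F_2(\sigma_c)\tau_c$ combined with the naturality square of $\eta_2$ at $\sigma_c$ (the paper presents exactly this as a commutative rectangle whose outer squares are the cone condition and whose inner square is that naturality), followed by the same telescoping conjugation arguments for the algebra-morphism property and contravariant functoriality. No gaps.
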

	\begin{proof}
		We first show that if there exists $\sigma \colon (T, \tau) \Rightarrow (T', \tau')$, then $Q(T, \tau) = Q(T', \tau')$, i.e the functor $Q$ is well defined. We have the following diagram:
		\[\begin{tikzcd}
			{F_1(c)} & {F_2T(c)} & {F_2T(c)} & {F_1(c)} \\
			\\
			{F_1(c)} & {F_2T'(c)} & {F_2T'(c)} & {F_1(c)}
			\arrow[Rightarrow, no head, from=1-1, to=3-1]
			\arrow["\tau", from=1-1, to=1-2]
			\arrow["{\tau'}"', from=3-1, to=3-2]
			\arrow["{F_2(\sigma)}"', from=1-2, to=3-2]
			\arrow["{F_2(\eta_2)}", from=1-2, to=1-3]
			\arrow["{\tau^{-1}}", from=1-3, to=1-4]
			\arrow[Rightarrow, no head, from=1-4, to=3-4]
			\arrow["{F_2(\eta_2)}"', from=3-2, to=3-3]
			\arrow["{(\tau')^{-1}}"', from=3-3, to=3-4]
			\arrow["{F_2(\sigma)}", from=1-3, to=3-3]
		\end{tikzcd}\]
		The composite along the top row is $Q(T, \tau)$ and the bottom is $Q(T', \tau')$. The outer squares commute by \eqref{IceCreamEquationOnLegs}, and the inner square is the naturality of $\sigma$. That $Q$ preserves identities is clear. Let $(T_1, \tau_1) \colon (C_1, F_1) \to (C_2, F_2)$ and $(T_2, \tau_2) \colon (C_2, F_2) \to C_3, F_3)$. The verification that $Q$ is a contravariant functor to $\Vec$ is: 
		\begin{align*}
			Q(T_1, \tau_1) \circ Q(T_2, \tau_2)(\eta_3) &= \tau_1^{-1} \circ (Q(T_2, \tau_2) \circ T_1) \circ \tau_1 \\
			&= \tau_1^{-1} \circ (T_1 \circ \tau_2)^{-1}\circ ((T_2 \circ T_1) \circ \eta_3) \circ \tau_2 \circ \tau_1 \\
			&= Q(T_1 \circ T_2, \tau_1  \circ T_1\tau_2)
		\end{align*}
		Finally, each morphism $Q(T, \tau)$ is given by conjugation and is therefore an algebra morphism. 
	\end{proof}
	\begin{rem}
	Previous iterations of this approach \cite{Schauenburg1992TannakaDF, Pfeiffer_2007} define $\Slice$ as a 1-category with morphisms given by equivalence classes in lieu of considering $\Alg$ as a locally discrete 2-category. A refinement of the statement that $Q$ is well defined, working with coalgebras instead of algebras appears as Proposition 2.1.2.1 in \cite{rivano1972categories}.
	\end{rem}
\begin{lem}[{\cite[Lemma 2.1.3]{Schauenburg1992TannakaDF}}] \label{EquivalenceInSlice}
	Let $(T, \tau) \in \Slice$. Then $(T, \tau)$ is an isomorphism if and only if $T$ is an equivalence.
\end{lem}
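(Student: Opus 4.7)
My plan is to verify the forward direction directly and to construct a quasi-inverse by hand for the reverse direction, using Lemma \ref{1Slice1Truncated} to reduce all coherence checks to the ice-cream cone relation \eqref{IceCreamEquationOnLegs}. The forward direction is essentially free: if $(T, \tau)$ admits a quasi-inverse $(S, \sigma)$ in $\Slice$, the 2-isomorphisms $(S, \sigma) \circ (T, \tau) \cong \id_{(C_1, F_1)}$ and $(T, \tau) \circ (S, \sigma) \cong \id_{(C_2, F_2)}$ descend on underlying 1-cells to natural isomorphisms $S T \cong \id_{C_1}$ and $T S \cong \id_{C_2}$, exhibiting $T$ as an equivalence with quasi-inverse $S$.

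For the reverse direction, suppose $T$ is an equivalence. I first promote $T$ to an adjoint equivalence with quasi-inverse $S \colon C_2 \to C_1$, unit $u \colon \id_{C_1} \Rightarrow S T$, and counit $v \colon T S \Rightarrow \id_{C_2}$ satisfying the triangle identities. I then define the natural isomorphism
\[
\sigma \;\coloneqq\; \tau_S^{-1} \circ F_2(v^{-1}) \colon F_2 \Rightarrow F_2 T S \Rightarrow F_1 S,
\]
so that $(S, \sigma)$ is a 1-morphism in $\Slice$ from $(C_2, F_2)$ to $(C_1, F_1)$. This choice of $\sigma$ is exactly what is needed for $v$ itself to lift to a 2-morphism in $\Slice$ from $(T S, \tau_S \circ \sigma) = (T S, F_2(v^{-1}))$ to $(\id_{C_2}, \id_{F_2})$, since the ice-cream cone condition \eqref{IceCreamEquationOnLegs} collapses to $\id_{F_2} = F_2(v) \circ F_2(v^{-1})$.

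It remains to check the corresponding lift for $u$, namely that $\sigma_T \circ \tau = F_1(u)$. Unfolding the definition of $\sigma$, the left-hand side becomes $\tau_{ST}^{-1} \circ F_2(v_T^{-1}) \circ \tau$; the triangle identity rewrites $v_T^{-1}$ as $T(u)$, after which naturality of $\tau$ at the morphism $u$ telescopes the composite to $F_1(u)$. By Lemma \ref{1Slice1Truncated}, the 2-morphisms $u$ and $v$ so produced are automatically invertible 2-cells in $\Slice$, witnessing $(S, \sigma)$ as a quasi-inverse of $(T, \tau)$. The only real obstacle is identifying $\sigma$; once it is pinned down by forcing $v$ to lift, the remaining verification is bookkeeping via naturality of $\tau$ and the triangle identity, with no further coherence to address thanks to the 1-truncatedness of $\Slice$.
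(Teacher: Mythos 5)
Your argument is correct, but note that the paper itself supplies no proof of this lemma at all: it is quoted verbatim from Schauenburg (Lemma 2.1.3 of \cite{Schauenburg1992TannakaDF}), so there is no internal argument to compare against. Your construction is the standard one and it checks out. The forward direction is immediate since 2-morphisms in $\Slice$ are in particular natural transformations of the underlying functors, invertible by Lemma \ref{1Slice1Truncated}. For the converse, the two key moves are exactly right: upgrading $T$ to an adjoint equivalence so that a triangle identity is available, and then reverse-engineering $\sigma = \tau_S^{-1}\circ F_2(v^{-1})$ so that the counit $v$ satisfies the ice-cream cone condition \eqref{IceCreamEquationOnLegs} tautologically. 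The remaining identity $\sigma_T\circ\tau = F_1(u)$ does follow componentwise from $T(u_c)=v_{T(c)}^{-1}$ together with naturality of $\tau$ at $u_c\colon c\to ST(c)$, as you say. The only point worth flagging is that invertibility of the resulting 2-cells needs no appeal to Lemma \ref{1Slice1Truncated} --- $u$ and $v$ are already natural isomorphisms --- though citing it does no harm. This is a complete, self-contained replacement for the external citation.
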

	\begin{lem} \label{1EquivLegs}
		The functors $\Mod(-)$ and $Q$ are inverse equivalences. 
	\end{lem}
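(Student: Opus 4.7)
The plan is to exhibit natural isomorphisms $\id_{\Alg} \cong Q \circ \Mod(-)$ and $\id_{\Slice} \cong \Mod(-) \circ Q$, both composites being covariant since $\Mod$ and $Q$ are each contravariant on 1-morphisms.

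For the first, I send $a \in A$ to the natural endomorphism of $\textbf{Forget}_A$ whose component at a left $A$-module $M$ is left multiplication by $a$; $A$-linearity of module maps gives naturality, and composition corresponds to multiplication in $A$. Conversely, any $\eta \in \End(\textbf{Forget}_A)$ is determined by $\alpha_\eta := \eta_A(1)$: naturality applied to the $A$-linear maps $A \to M$, $1 \mapsto m$, forces $\eta_M(m) = \alpha_\eta \cdot m$. The two assignments are mutually inverse algebra isomorphisms, and naturality in $A$ follows from unwinding the formula for $Q(\Mod(\phi), \id)$ on an algebra map $\phi$, which reduces to $\phi$ itself under the identification.

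For the second, given $(C, F) \in \Slice$, I define a 1-morphism $(\Phi, \id) \colon (C, F) \to (\Mod(\End(F)), \textbf{Forget})$ by equipping $F(c)$ with the tautological action $\eta \cdot v := \eta_c(v)$ and setting $\Phi(f) := F(f)$; the naturality condition defining $\End(F)$ is exactly what makes $\Phi(f)$ an $\End(F)$-module map. By Lemma \ref{EquivalenceInSlice}, it suffices to show $\Phi$ is an equivalence. Fix representatives $\{s_i\}_{i \in I}$ of the isomorphism classes of simples of $C$. Faithfulness of $F$ implies each $F(s_i)$ is nonzero, and $\Hom_C(s_i, s_j) = 0$ for $i \ne j$ combined with the semisimple decomposition of objects of $C$ yields an algebra isomorphism $\End(F) \cong \prod_{i \in I} \End_\bbK(F(s_i))$. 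This identifies $\End(F)$ as a finite-dimensional semisimple algebra whose simple modules are precisely the $F(s_i)$; thus $\Phi$ sends a complete set of simples of $C$ bijectively onto a complete set of simples of $\Mod(\End(F))$ and induces isomorphisms on hom-spaces between simples (via $F$ and a dimension count), so is an equivalence by semisimplicity of both categories.

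The main obstacle is verifying that the tautological $\Phi$ is an equivalence, which rests entirely on the product decomposition $\End(F) \cong \prod_i \End_\bbK(F(s_i))$: without faithfulness of $F$ some factors would vanish and $\Phi$ would miss simple $\End(F)$-modules, and without semisimplicity of $C$ the factorization would fail. Naturality of the resulting isomorphism in $(C, F)$ is routine from the explicit formulas, and the choice of the 2-cell witnessing it in $\Slice$ is rigidly determined by Lemma \ref{1Slice1Truncated}.
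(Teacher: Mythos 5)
Your proof is correct, and while the first half coincides with the paper's argument, the second half takes a genuinely different route. For $\id_{\Alg} \cong Q \circ \Mod(-)$, evaluating a natural endomorphism of $\textbf{Forget}_A$ at $1$ in the regular representation and using naturality against the maps $A \to M$, $1 \mapsto m$, is exactly the paper's Cauchy-completion/Yoneda argument unwound into explicit formulas, so there is no real difference there. For $\id_{\Slice} \cong \Mod(-) \circ Q$, however, you prove that the tautological functor $C \to \Mod(\End(F))$ is an equivalence via the Wedderburn-type decomposition $\End(F) \cong \prod_{i} \End_\bbK(F(s_i))$ and a matching of simples and hom-spaces on both sides, whereas the paper instead presents $(C,F)$ as $(\Mod(A),\, -\otimes_A M)$ for a faithful bimodule $_AM_\bbK$ and invokes Morita theory (Morita III together with the double centralizer theorem) to see that $_AM_{\End(M)}$ induces a Morita equivalence. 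Your argument is more elementary and self-contained, and it isolates cleanly where each hypothesis enters: the product decomposition of $\End(F)$ holds for any linear functor on a finite semisimple category, faithfulness only guaranteeing that no factor vanishes so that every simple $\End(F)$-module is hit, and algebraic closedness of $\bbK$ supplying the Schur-lemma dimension count on hom-spaces. What the paper's heavier machinery buys is categorifiability: Lemma \ref{2EquivLegs} runs the identical Morita/bicommutant argument one level up, where no analogue of the explicit matrix-algebra decomposition is available. The remaining bookkeeping in your write-up (the identity $2$-cell making $(\Phi,\id)$ a morphism in $\Slice$, promotion of an equivalence of underlying categories to an equivalence in $\Slice$ via Lemma \ref{EquivalenceInSlice}, and rigidity of the witnessing $2$-cells from Lemma \ref{1Slice1Truncated}) is handled correctly.
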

\begin{proof}

	We begin by defining the natural isomorphism $\gamma \colon \id_{\sf{Alg}} \to Q \circ \Mod$. Let $A$ be an algebra with forgetful functor $F_A \colon \Mod(A) \to \Vec$. To define a map $A \to \End(F_A)$, for each module $(V, \rho)$ we require a map $A \to \End(V)$. We choose $\rho$, the action of $A$ on $V$. Since $\Mod(A)$ is the Cauchy completion of $\mathbf{B}A$ and $\Vec$ is Cauchy complete, restriction to $\mathbf{B}A$ gives an isomorphism $\End(F_A) \cong \End(F_A |_{\mathbf{B}A}) \cong \Hom_{\mathbf{B}A}(*, -)$. Thus, by the Yoneda Lemma, we have 
	\[
	\End(F_A) \cong \End(Hom_{\mathbf{B}A}(*, -)) \cong A
	\]  
	As constructed, $\gamma$ is manifestly natural. 
	Next, we define a natural transformation $\zeta \colon 1 \simeq \Mod \circ Q$ as follows. For $(C, F)$ in $\Slice$, define a functor $\zeta_C \colon C \to \Mod(\End(F))$  as follows: 
	\begin{itemize}
		\item On objects: $c \mapsto (F(c), \End(F)|_c)$; the vector space $F(c)$ with $\End(F)$ action given by taking the leg at $c$.
		\item On morphisms: $c \mapsto Ff$; this intertwines the $\End(F)$ action by definition of $\End(F)$.
	\end{itemize}
	Then $(\zeta_C, =)$ is a map $(C, F) \to (\Mod(\End(F)), \textbf{Forget}_{\End(F)})$ in $\Slice$, which is manifestly natural in $(C, F)$. It remains to verify that $\zeta_C$ is an equivalence of categories, and therefore an equivalence in $\Slice$. Choose an algebra $A$ such that $C = \Mod(A)$, and a bimodule $_AM_\bbC$ such that 
	\[
	\begin{tikzcd}
		\Mod(A) \ar[d, "\sim"] \ar[r, " - \otimes M"] & \bbC -\Mod  \ar[d, "\sim"] \\ C \ar[r, "F"] & \Vec 
	\end{tikzcd}
	\]
	commutes exactly. Since $\Mod(-)$ is an equivalence from the Morita 2-category to $\TwoVec$, we have 
	\[
	\Mod(\End(F))\simeq \Mod(\End_{\Mod(A)}(_A M_\bbC)) = \Mod(\End_{\Mod(A)}(_A M))
	\]
	Since $F$ is a faithful functor, $M$ is a faithful module. The bimodule $_A M_{\End_{\Mod(A)}(M)}$ corresponds to a functor $\Mod(A) \to \Mod(\End_{\Mod(A)}(_A M))$ which is exactly $F \colon C \to \Mod(\End(F))$. 
	
	We claim the bimodule $_A M_{\End_{\Mod(A)}(M)}$ induces a Morita equivalence. This is a consequence of Morita III( see \cite[\S18]{lam2012lectures}), the faithfulness of $M$, and the double centralizer theorem. 
	\end{proof}
	We next give $\Slice$ the structure of a monoidal category such that $Q$ is a monoidal functor to the category of algebras. To do this we will need to recall the 2-universal property of the Deligne tensor product from \cite{décoppet20212deligne}, specialized to the case when the categories involved are semisimple, as well as some further properties. 
	\begin{thm}[{\cite[\S1]{décoppet20212deligne}}]
		Given $C$ and $D$ two finite semisimple linear categories, there exists a finite semisimple linear category $C \boxtimes D$ and linear functor $\boxtimes \colon C \times D \to C \boxtimes D$ such that precomposition with $\boxtimes$ induces an equivalence 
		\[
		\Hom(C \boxtimes D, E) \simeq \Hom_\textit{bilin}(C \times D, E)
		\]
		for all finite $E$. This equivalence is natural in all three variables. Unpacked, this means:
		\begin{itemize} 
			\item For every finite $E$ and bilinear bifunctor $F \colon C \times D $ there exists a functor $\bar F \colon C \boxtimes D \to E$ and natural isomorphism $u \colon \bar F \circ \boxtimes \Rightarrow F$.
			\item For every two functors $G, H \colon C \boxtimes D \to E$ and natural transformation $t \colon G \circ \boxtimes \Rightarrow H \circ \boxtimes$, there exists a \textit{unique} natural transformation $t' \colon G \to H$ such that $t' \circ \boxtimes = t$.
		\end{itemize}
		Furthermore: 
		\begin{itemize}
			\item If $C$ and $D$ are monoidal, then so is $C \boxtimes D$. With this monoidal structure, the functor $\boxtimes$ is monoidal. 
			\item If $F \colon C \times D \to E$ is monoidal, then so is the induced functor $\bar F \colon C \boxtimes D \to E$
		\end{itemize}
		Finally, for algebras $A, B$, then 
		\[
		\Mod(A) \boxtimes \Mod(B) \simeq \Mod(A \otimes B)
		\]
		and this equivalence is natural in $A$ in $B$.
	\end{thm}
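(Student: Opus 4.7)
The plan is to reduce everything to the module-category picture via the equivalence between finite semisimple linear categories and module categories over finite-dimensional semisimple algebras. Given $C$ and $D$, choose algebras $A$ and $B$ with chosen equivalences $C \simeq \Mod(A)$ and $D \simeq \Mod(B)$, and \emph{define} $C \boxtimes D \coloneqq \Mod(A \otimes B)$, with $\boxtimes : C \times D \to C \boxtimes D$ sending $(M, N)$ to $M \otimes_\bbK N$ equipped with its evident $A \otimes B$-action. This immediately delivers the last displayed equivalence of the theorem (by construction), and it is independent of the chosen algebras up to canonical equivalence because $\Mod(-)$ is an equivalence from the Morita $2$-category to $\TwoVec$.

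For the universal property, let $F\colon C\times D\to E$ be a bilinear functor with $E$ finite semisimple. Writing $E \simeq \Mod(R)$, the key observation is that $F(A,B)$ -- the value on the rank-one free modules -- acquires three commuting actions (an $A$-action, a $B$-action and an $R$-action) and is thus an $A\otimes B\otimes R^{\op}$-module. Tensoring with this bimodule defines a linear functor $\bar F\colon \Mod(A\otimes B)\to \Mod(R)\simeq E$, and the isomorphism $u\colon \bar F\circ\boxtimes\Rightarrow F$ is assembled from the evident identifications on free modules, extended by semisimplicity (equivalently, Cauchy completeness). For the $2$-dimensional part, any natural transformation $t\colon G\circ\boxtimes\Rightarrow H\circ\boxtimes$ is determined by its component at $(A,B)$ because $A\otimes B$ is a progenerator of $\Mod(A\otimes B)$; one then defines $t'$ by extending this component along the image of $\boxtimes$, and uniqueness is forced by the generating property. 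The naturality in the three variables amounts to observing that the construction of $\bar F$ uses only the hom-$2$-functor structure and commutes with change of $A$, $B$, and $R$.

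For the monoidal enhancement, suppose $C$ and $D$ carry monoidal structures $\otimes_C$ and $\otimes_D$; these are linear bifunctors and therefore, by the universal property applied to $C\times C$ and to $D\times D$, factor through $C\boxtimes C\to C$ and $D\boxtimes D\to D$. The composite
\[
(C\boxtimes D)\boxtimes(C\boxtimes D)\;\simeq\;(C\boxtimes C)\boxtimes(D\boxtimes D)\;\longrightarrow\;C\boxtimes D
\]
supplies the tensor product on $C\boxtimes D$, where the first equivalence comes from iterating the universal property (symmetry and associativity of $\boxtimes$). Coherence of the resulting monoidal structure, and monoidality of the functor $\boxtimes\colon C\times D\to C\boxtimes D$, reduce via the $2$-universal property to coherence in $C$ and $D$ separately. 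The second bullet in the monoidal addendum -- that a monoidal bilinear $F$ induces a monoidal $\bar F$ -- follows because the monoidal structure constraints on $\bar F$ are determined by their restrictions along $\boxtimes$, where they coincide with those of $F$.

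The main obstacle I anticipate is not the construction itself but verifying the $2$-categorical naturality and coherence cleanly: one must check that the universal property is natural in $C$, $D$, and $E$ as a statement about an equivalence of hom-categories, and that the induced monoidal associator and unitor on $C\boxtimes D$ are coherent. Both issues become tractable by systematically exploiting the uniqueness clause of the universal property, which forces any two candidate $2$-cells that agree after restriction along $\boxtimes$ to be equal. Thus after the initial construction, all remaining verifications become applications of a single ``extend and compare on $\boxtimes(c,d)$'' principle, which handles the bookkeeping that would otherwise be the main source of difficulty.
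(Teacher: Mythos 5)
This statement is quoted background: the paper does not prove it, but imports it from \cite{décoppet20212deligne} (where it is in turn the classical semisimple case of Deligne's tensor product), so there is no in-paper proof to compare against. Your proposal is essentially the standard argument for this result and is sound: model $C\simeq\Mod(A)$, $D\simeq\Mod(B)$, set $C\boxtimes D\coloneqq\Mod(A\otimes B)$, and use an Eilenberg--Watts style argument to verify the universal property, with the monoidal addendum extracted from the uniqueness clause. A few points deserve to be made explicit rather than left implicit. First, you need $A\otimes B$ to be again finite semisimple for $\Mod(A\otimes B)$ to be an object of $\TwoVec$; this holds here because the paper works over an algebraically closed field (so $A$ and $B$ are products of matrix algebras), but it is not automatic over a general base. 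Second, the handedness bookkeeping in ``$F(A,B)$ is an $A\otimes B\otimes R^{\op}$-module'' should be checked: the $A$- and $B$-actions arise from $\End_C(A)\cong A^{\op}$ and $\End_D(B)\cong B^{\op}$ acting through functoriality of $F$, and bifunctoriality (not merely bilinearity) is what makes them commute with each other and with the $R$-action. Third, for the uniqueness clause you should say why a natural transformation $G\Rightarrow H$ is determined by its restriction along $\boxtimes$: the point is that $A\boxtimes B=A\otimes B$ is a progenerator lying in the image of $\boxtimes$, and $\End_{C\boxtimes D}(A\boxtimes B)$ is spanned by morphisms of the form $f\boxtimes g$, so naturality of $t$ already forces full $\End(A\otimes B)$-equivariance of $t_{(A,B)}$. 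With those details filled in, your argument gives a complete proof, and indeed a more self-contained one than the paper offers, since the paper simply defers to the cited reference.
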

	\begin{construction} \label{Monoidal1SliceConstruction}
		Let $(C, F)$ and $(C', F')$ be objects in $\Slice$. We define their tensor product $\boxtimes$ as 
		\[
		(C, F) \boxtimes (C', F') := (C \boxtimes C', \otimes_\Vec \circ (F \boxtimes F')).
		\]
		This assignment extends to a functor $\Slice \times \Slice \to \Slice$ by the assignment:
		\[
		(T, \tau) \boxtimes (T', \tau') := (T \boxtimes T', \otimes_\Vec \circ (\tau \boxtimes \tau')).
		\]
		The monoidal unit is $\id_\Vec$. To define the associator we will add additional data to the associator on $\TwoVec$, which is essentially the Cartesian monoidal structure. We need a $2$-morphism $\sim$ as below:
		\[\begin{tikzcd}
			{(C \boxtimes C') \boxtimes C''} && {C \boxtimes (C' \boxtimes C'')} \\
			\\
			& \Vec
			\arrow["{\alpha_\TwoVec}", from=1-1, to=1-3]
			\arrow[""{name=0, anchor=center, inner sep=0}, "{\otimes \circ ((\otimes \circ (F \boxtimes F')) \boxtimes F'')}"', from=1-1, to=3-2]
			\arrow["{\otimes \circ (F \boxtimes (\otimes \circ (F' \boxtimes F'')))}", from=1-3, to=3-2]
			\arrow["{\sim }"', shorten <=11pt, shorten >=16pt, Rightarrow, from=0, to=1-3]
		\end{tikzcd}.\]
		After expanding and rearranging the tensor factors on the functors into $\Vec$, we may define the associator using the 2-naturality of the Deligne tensor product as the morphism: 
		\begin{equation}
			\label{1SliceAssociator}\begin{tikzcd} 
				{(C \boxtimes C') \boxtimes C''} && {C \boxtimes (C' \boxtimes C'')} \\
				{(\Vec \boxtimes \Vec) \boxtimes \Vec} && {\Vec \boxtimes (\Vec \boxtimes \Vec)} \\
				{\Vec \boxtimes \Vec} && {\Vec \boxtimes \Vec} \\
				& \Vec
				\arrow["{\alpha_\TwoVec}", from=1-1, to=1-3]
				\arrow["{(F \boxtimes F') \boxtimes F'}"', from=1-1, to=2-1]
				\arrow["{\alpha_\TwoVec}"', from=2-1, to=2-3]
				\arrow["{F \boxtimes (F' \boxtimes F'')}", from=1-3, to=2-3]
				\arrow["{\alpha_\TwoVec}", shorten <=13pt, shorten >=20pt, Rightarrow, from=2-1, to=1-3]
				\arrow["{\otimes \boxtimes 1}"', from=2-1, to=3-1]
				\arrow["{1\boxtimes \otimes}", from=2-3, to=3-3]
				\arrow["\otimes"', from=3-1, to=4-2]
				\arrow["\otimes", from=3-3, to=4-2]
				\arrow["{\alpha_\Vec}"', shorten <=14pt, shorten >=14pt, Rightarrow, from=3-1, to=2-3]
			\end{tikzcd}.
		\end{equation}
		We will verify the pentagon axiom somewhat indirectly, to best mirror the approach for 2-categories.  Because the functors $Q \colon \Slice \to \Alg$ and $A \mapsto (\Mod(A), \textbf{Forget}_A)$ are equivalences, we know that the functor $\tilde \boxtimes$ given by 
		\[
		(C, F) \tilde \boxtimes (C', F') := (\Mod(\End(F) \otimes \End(F')), \textbf{Forget}_{\End(F) \otimes \End(F')})
		\]
		defines a symmetric monoidal structure on $\Slice$. There is a natural isomorphism 
			\[
			(C \boxtimes C', \otimes_\Vec \circ (F \boxtimes F')) \to  (\Mod(\End(F) \otimes \End(F')), \textbf{Forget}_{\End(F) \otimes \End(F')})
			\]
		 given by $c \boxtimes c' \mapsto F(c) \otimes F'(c')$ with the evident $\End(F) \otimes \End(F')$ action.
			This evidently natural map, together with the identity component natural transformation, is an equivalence in $\Slice$ by Lemmas \ref{EquivalenceInSlice} and \ref{1EquivLegs}. 
			Transporting the morphism \eqref{1SliceAssociator} across this equivalence results in the associator for $\TwoVec$, by construction.  As a consequence, both $Q$ and $\Mod$ may be  enhanced to symmetric monoidal equivalences. 
	\end{construction}
	\begin{rem*} 
		An important feature of the above construction is that $\alpha_{\TwoVec} \colon (\Vec \boxtimes \Vec) \boxtimes \Vec \to \Vec \boxtimes (\Vec \boxtimes \Vec)$ is the induced morphism from the associator on $\Vec$, and likewise for the structures on the monoidal unit $\id_\Vec$. 
	\end{rem*}
	\begin{rem*} \label{InducedFromHigherMonoidalStructure}
		This monoidal structure on $\Slice$ can alternatively be obtained as follows. Consider the monoidal 2-category structure on $\TwoVec$. As $\Vec$ is a monoid object in $\TwoVec$, the 2-category $\TwoVec/\Vec$ is monoidal. 
	\end{rem*}
	\begin{cor} \label{1CategoriesOfMonoidsEquivalent}
		$Q$ induces a symmetric monoidal equivalence between the category of monoids in $\Slice$ and the category of bialgebras.
	\end{cor}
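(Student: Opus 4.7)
The plan is to leverage the fact that a contravariant symmetric monoidal equivalence automatically induces an equivalence between monoid objects in the source and comonoid objects in the target, and then to identify comonoids in $\Alg$ with bialgebras in the classical sense. Because Construction \ref{Monoidal1SliceConstruction} already promotes $Q$ to a symmetric monoidal equivalence contravariant on 1-morphisms, the bulk of the work has already been done, and what remains is essentially formal bookkeeping.

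First I would unpack what a monoid in $\Slice$ looks like. A monoid consists of an object $(C,F)$ together with a multiplication $m\colon (C,F)\boxtimes (C,F)\to (C,F)$ and unit $u\colon \id_{\Vec}\to (C,F)$ satisfying associativity and unitality. Under $Q$, and using that $Q$ sends the monoidal product $\boxtimes$ to $\otimes$ and the monoidal unit to $\bbK$, these data pull back to morphisms $Q(m)\colon \End(F)\to \End(F)\otimes\End(F)$ and $Q(u)\colon \End(F)\to \bbK$ in $\Alg$. Because $Q$ is contravariant at the 1-morphism level and symmetric monoidal, the associator and unitor diagrams for $(C,F,m,u)$ become precisely the coassociativity and counit diagrams for $(\End(F),Q(m),Q(u))$.

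Next I would observe that a comonoid in $(\Alg,\otimes,\bbK)$ is by definition an algebra equipped with a comultiplication and counit, both of which are algebra morphisms, satisfying coassociativity and counitality — in other words, a bialgebra. Thus restricting the symmetric monoidal equivalence $Q$ to monoids lands exactly in the category of bialgebras, and conversely, a bialgebra yields the monoid $(\Mod(H),\mathbf{Forget}_H)$ in $\Slice$ via the inverse equivalence $\Mod(-)$. Fully faithfulness of $Q$ on morphisms implies fully faithfulness on morphisms of monoids/bialgebras (which are required to respect the multiplication and unit).

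Finally, to upgrade to a \emph{symmetric monoidal} equivalence, I would note that the category of monoids in any symmetric monoidal 1-category inherits a symmetric monoidal structure from termwise tensor product, and similarly for comonoids; since $Q$ is itself symmetric monoidal, these induced structures transport along the equivalence. The main obstacle, such as it is, lies in making sure the coherence witnesses of $Q$ restrict compatibly to the monoid-level structure, but because $\Slice$ is $1$-truncated by Lemma \ref{1Slice1Truncated} and $\Alg$ is a $1$-category, all such coherence data are uniquely determined, so no further verification is required beyond invoking the existing monoidal structure on $Q$.
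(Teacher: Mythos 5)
Your proposal is correct and matches the paper's (implicit) argument: the paper states this as an immediate corollary of Construction \ref{Monoidal1SliceConstruction}, which upgrades $Q$ to a contravariant symmetric monoidal equivalence, so that monoids in $\Slice$ correspond to comonoids in $\Alg$, i.e.\ bialgebras. The only addition you make — noting that $1$-truncation of $\Slice$ renders all coherence checks automatic — is exactly the point the paper relies on throughout, so no discrepancy.
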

	\begin{prop} \label{1MonoidIn1Slice}
		Every monoidal functor $(C, F)$ in $\Slice$ is canonically a monoid, and with respect to this structure, every morphism $(T, \tau)$ where $T$ and $\tau$ are monoidal is a monoid homomorphism. Additionally, every monoid in $\Slice$ is isomorphic to one of this form.  
	\end{prop}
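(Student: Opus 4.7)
The plan is to build explicit two-way translations: from a monoidal pair $(C,F)$ to a monoid structure on $(C,F)$ in $\Slice$, and, conversely, to recover monoidal data on $C$ and $F$ from an abstract monoid structure in $\Slice$. For the forward direction, the tensor product $\otimes_C \colon C \times C \to C$ is bilinear, hence by the universal property of the Deligne tensor product it extends to a linear functor $m \colon C \boxtimes C \to C$ together with a natural iso $m \circ \boxtimes \cong \otimes_C$. The monoidality of $F$ supplies a natural iso $F(c \otimes c') \cong F(c) \otimes F(c')$, which combined with the universal iso yields $\mu \colon \otimes_\Vec \circ (F \boxtimes F) \Rightarrow F \circ m$, packaging into a 1-morphism $(m,\mu) \colon (C,F) \boxtimes (C,F) \to (C,F)$ in $\Slice$. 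The monoidal unit $\mathbf{1}_C$ together with the unit constraint of $F$ similarly assembles into a 1-morphism $(e, \epsilon) \colon \id_\Vec \to (C,F)$. Verifying the monoid axioms reduces, via Construction \ref{Monoidal1SliceConstruction}, which builds the associator in $\Slice$ from the associator of $\TwoVec$ together with compatibility data, to the pentagon and triangle identities for $C$ and the coherence equations for the monoidal functor $F$.

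For the morphism claim, suppose $(T, \tau) \colon (C,F) \to (C',F')$ with $T$ and $\tau$ monoidal. The structural natural iso $T(c \otimes c') \cong T(c) \otimes' T(c')$ provides the 2-cell comparing $(T,\tau) \circ (m,\mu)$ with $(m',\mu') \circ ((T,\tau) \boxtimes (T,\tau))$, while monoidality of $\tau$ supplies the compatibility between $\mu$ and $\mu'$; the unit comparison is analogous. By Lemma \ref{1Slice1Truncated}, once these underlying natural transformations are exhibited the homomorphism coherence equations hold automatically.

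For the converse, given an abstract monoid $((m,\mu), (e,\epsilon))$ on $(C,F)$, define $\otimes_C \coloneqq m \circ \boxtimes \colon C \times C \to C$ and $\mathbf{1}_C \coloneqq e(\bbK)$. The associator 2-isomorphism of the monoid, pulled back along $\boxtimes \times \id$ and $\id \times \boxtimes$ using the 2-naturality of Deligne, yields a natural iso $\otimes_C \circ (\otimes_C \times \id) \Rightarrow \otimes_C \circ (\id \times \otimes_C)$, which serves as the associator on $C$; unitors arise similarly from $e$. The pentagon and triangle for $C$ follow from the corresponding monoid axioms in $\Slice$, and the constraint $\mu$ restricts along $\boxtimes$ to the monoidal coherence data for $F$. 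The main obstacle is the bookkeeping across the universal property of $\boxtimes$: every constraint must be translated between functors out of $C \boxtimes C$ and bilinear functors out of $C \times C$, and the $\Slice$-associator from Construction \ref{Monoidal1SliceConstruction} must be matched against the associator of $C$ on the nose. However, 1-truncation (Lemma \ref{1Slice1Truncated}) collapses every coherence check to an equality of underlying 1-morphisms, and faithfulness of $F$ (together with Lemma \ref{FaithfulBalancedImpliesConservative}) ensures that the unique 2-cells of $\Slice$ faithfully record the monoidal structure on $C$, so no higher coherence needs to be explicitly verified.
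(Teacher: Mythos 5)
Your forward direction and the morphism claim match the paper's proof essentially step for step: the multiplication is $(\otimes_C,\chi)$ induced via the universal property of $\boxtimes$, the associativity 2-cell in $\Slice$ has underlying natural isomorphism $\alpha_C$ with the ice-cream cone condition being exactly the hexagon axiom for the tensorator of $F$, and for $(T,\tau)$ the tensorator of $T$ supplies the 2-cell while monoidality of $\tau$ gives the cone condition. Where you genuinely diverge is the converse. The paper dispatches ``every monoid is of this form'' in one line by citing Corollary \ref{1CategoriesOfMonoidsEquivalent}: a monoid in $\Slice$ transports under the monoidal equivalence $Q$ to a bialgebra $B$, and is therefore isomorphic to the canonical monoid on $(\Mod(B),\textbf{Forget}_B)$. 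You instead reconstruct the monoidal structure on $C$ directly, setting $\otimes_C \coloneqq m\circ\boxtimes$ and $\mathbf{1}_C \coloneqq e(\bbK)$ and extracting the associator and the tensorator of $F$ from the monoid's structure 2-cells. This is more explicit and avoids leaning on the bialgebra equivalence, at the cost of the bookkeeping you acknowledge. One phrasing to tighten: the pentagon for $\alpha_C$ does not follow from ``the corresponding monoid axioms'' --- in a 1-truncated 2-category a monoid carries no pentagon-type axiom at all --- but rather from Lemma \ref{1Slice1Truncated} directly, since both composites in the pentagon are parallel 2-cells in $\Slice$ (each satisfies an ice-cream cone condition as a pasting of such) and hence coincide. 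You make this point in your closing sentence, so the argument goes through; just be aware that truncation, not a monoid axiom, is doing the work there.
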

	\begin{proof}
		The final part of this lemma is a consequence of Corollary \ref{1CategoriesOfMonoidsEquivalent}. We let $\chi$ be the tensorator for $F = (F, \chi)$. The multiplication $\mu^{(F, \chi)}$ and unit $\iota^{(F, \chi)}$ are given by the morphisms (induced by universal property of the Deligne tensor product): 
		\[\begin{tikzcd}
			& {C\boxtimes C} && C && \Vec && C \\
			{\mu^{(F, \chi)}=} &&&& {\iota^{(F,\chi)} =} \\
			&& \Vec &&&& \Vec
			\arrow["\otimes", from=1-2, to=1-4]
			\arrow["F", from=1-4, to=3-3]
			\arrow[""{name=0, anchor=center, inner sep=0}, "{\otimes \circ (F\boxtimes F)}"', from=1-2, to=3-3]
			\arrow["{\bbC \mapsto 1}", from=1-6, to=1-8]
			\arrow["F", from=1-8, to=3-7]
			\arrow[""{name=1, anchor=center, inner sep=0}, Rightarrow, no head, from=1-6, to=3-7]
			\arrow["\chi"', shorten <=9pt, shorten >=9pt, Rightarrow, from=0, to=1-4]
			\arrow["{\iota_F}"', shorten <=8pt, shorten >=13pt, Rightarrow, from=1, to=1-8]
		\end{tikzcd}\]
		Expanding composites as in \eqref{1SliceAssociator}, the shorter composite of the pentagon axiom for $(C, F)$ is: 
		\[\begin{tikzcd}
			{(C \boxtimes C) \boxtimes C} &&& {C \boxtimes C} && C \\
			& {(\Vec \boxtimes \Vec) \boxtimes\Vec} && {\Vec \boxtimes \Vec} && {} \\
			&& {\Vec \boxtimes \Vec} \\
			&&& \Vec
			\arrow["{\otimes \boxtimes 1}", from=1-1, to=1-4]
			\arrow["\otimes", from=1-4, to=1-6]
			\arrow["F", from=1-6, to=4-4]
			\arrow["{F \boxtimes F}", from=1-4, to=2-4]
			\arrow["\otimes", from=2-4, to=4-4]
			\arrow["{(F \boxtimes F) \boxtimes F}"'{pos=0.2}, from=1-1, to=2-2]
			\arrow["{\otimes \boxtimes 1}"', from=2-2, to=3-3]
			\arrow["\otimes"', from=3-3, to=4-4]
			\arrow["\chi"{pos=0.2}, shorten <=5pt, shorten >=28pt, Rightarrow, from=2-4, to=2-6]
			\arrow["{\otimes \circ (\chi \boxtimes 1)}", shorten <=29pt, shorten >=29pt, Rightarrow, from=2-2, to=2-4]
		\end{tikzcd}\]
		and the longer composite is: 
		\[\begin{tikzcd}
			{(C \boxtimes C) \boxtimes C} && {C \boxtimes (C \boxtimes C)} &&& {C \boxtimes C} && C \\
			{(\Vec \boxtimes \Vec) \boxtimes \Vec} && {\Vec \boxtimes (\Vec \boxtimes \Vec)} && {} && {} \\
			{\Vec \boxtimes \Vec} && {\Vec \boxtimes \Vec} & \Vec \\
			\\
			&& \Vec
			\arrow["{\alpha_\TwoVec}", from=1-1, to=1-3]
			\arrow[""{name=0, anchor=center, inner sep=0}, "{(F \boxtimes F) \boxtimes F}"', from=1-1, to=2-1]
			\arrow["{\alpha_\TwoVec}"', from=2-1, to=2-3]
			\arrow[""{name=1, anchor=center, inner sep=0}, "{F \boxtimes (F \boxtimes F)}", from=1-3, to=2-3]
			\arrow["{\otimes \boxtimes 1}"', from=2-1, to=3-1]
			\arrow["{1 \boxtimes \otimes}", from=2-3, to=3-3]
			\arrow["\otimes"', from=3-1, to=5-3]
			\arrow["\otimes", from=3-3, to=5-3]
			\arrow["{1 \boxtimes \otimes}", from=1-3, to=1-6]
			\arrow["\otimes", from=1-6, to=1-8]
			\arrow["\otimes", from=3-4, to=5-3]
			\arrow["\otimes", curve={height=-30pt}, from=1-8, to=5-3]
			\arrow["{F \boxtimes F}", from=1-6, to=3-4]
			\arrow["{\alpha_\Vec}"', shorten <=15pt, shorten >=15pt, Rightarrow, from=3-1, to=3-3]
			\arrow["\chi"', shorten <=22pt, shorten >=22pt, Rightarrow, from=2-5, to=2-7]
			\arrow["{\otimes \circ (1 \boxtimes \chi)}"', shorten <=19pt, shorten >=19pt, Rightarrow, from=2-3, to=2-5]
			\arrow["{\alpha_\TwoVec}", shorten <=25pt, shorten >=37pt, Rightarrow, from=0, to=1]
		\end{tikzcd}\]
		We show these are isomorphic morphisms in $\Slice$. The requisite natural isomorphism $\otimes \circ (\otimes \boxtimes 1) \simeq \otimes \circ (1 \boxtimes \otimes) \circ \alpha_\TwoVec$ is given by $\alpha_C$. Then the ice-cream cone condition \eqref{IceCreamCondition1Cat} is precisely the hexagon axiom for $F$. 
		The unit axioms are proven in an identical way. Now let $(T, \tau) \colon (C, F) \to (C', F')$ be a morphism where both $T$ and $\tau$ are monoidal. We show $(T, \tau)$ is a monoid homomorphism. We need to show the following morphisms are isomorphic in $\Slice$:
		\[\begin{tikzcd}
			{C \boxtimes C} && C && {C'} && {C \boxtimes C} && {C' \boxtimes C'} && {C'} \\
			&&&&& {\text{and}} \\
			&& \Vec &&&&&& \Vec
			\arrow[""{name=0, anchor=center, inner sep=0}, "{\otimes \circ (F \boxtimes F)}"', from=1-1, to=3-3]
			\arrow[""{name=1, anchor=center, inner sep=0}, "F"'{pos=0.7}, from=1-3, to=3-3]
			\arrow["{F'}", from=1-5, to=3-3]
			\arrow["\otimes", from=1-1, to=1-3]
			\arrow["T", from=1-3, to=1-5]
			\arrow["{T \boxtimes T}", from=1-7, to=1-9]
			\arrow["\otimes", from=1-9, to=1-11]
			\arrow[""{name=2, anchor=center, inner sep=0}, "{\otimes \circ (F \boxtimes F)}"', from=1-7, to=3-9]
			\arrow[""{name=3, anchor=center, inner sep=0}, "{\otimes \circ (F' \boxtimes F')}"{description, pos=0.7}, from=1-9, to=3-9]
			\arrow["{F'}", from=1-11, to=3-9]
			\arrow["\tau"', shorten <=11pt, Rightarrow, from=1, to=1-5]
			\arrow["\chi"', shorten <=6pt, Rightarrow, from=0, to=1-3]
			\arrow["{\otimes \circ (\tau \boxtimes \tau)}", shorten <=6pt, Rightarrow, from=2, to=1-9]
			\arrow["{\chi'}"', shorten <=12pt, Rightarrow, from=3, to=1-11]
		\end{tikzcd}\]
		The requisite natural isomorphism $\otimes \circ (T \boxtimes T) \simeq T \circ \otimes$ is precisely the tensorator of $T$, and monoidality of $\tau$ gives the ice cream cone condition. 
	\end{proof}
	\begin{cor}[Bialgebra Reconstruction Theorem]
		The functors $(C, F) \mapsto \End(F)$ and $A \to (\Mod(A), \textbf{Forget}_A)$ are contravariant monoidal equivalences between the category of semisimple bialgebras and the category of monoids in $\Slice$. Here, the monoidal structure on $\Mod(A)$ is the standard one induced from the comultiplication, so that the forgetful functor is monoidal. 
	\end{cor}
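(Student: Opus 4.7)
The plan is to stitch together results already established. By Corollary \ref{1CategoriesOfMonoidsEquivalent}, $Q$ (with quasi-inverse $\Mod(-)$) restricts to a symmetric monoidal equivalence, contravariant at the level of 1-morphisms, between the category of monoids in $\Slice$ and the category of bialgebras. By Proposition \ref{1MonoidIn1Slice}, every monoid in $\Slice$ is, up to unique isomorphism, of the form $(C,F)$ with $F$ a monoidal functor equipped with its canonical monoid structure, and monoid morphisms between such objects are exactly the morphisms $(T,\tau)$ in $\Slice$ for which $T$ and $\tau$ are monoidal. Composing these two identifications yields the claimed monoidal equivalence between bialgebras and pairs $(C,F)$ with $F$ faithful and monoidal.

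The one remaining point to nail down is that the monoid structure on $(\Mod(A),\mathbf{Forget}_A)$ corresponding to a bialgebra $(A,\Delta,\varepsilon)$ agrees with the standard one, in which $V\otimes W$ carries the $A$-action pulled back along $\Delta$ and $\mathbf{Forget}_A$ is strong monoidal with trivial tensorator $\chi=\id$. Substituting $\chi=\id$ into the formulas of Proposition \ref{1MonoidIn1Slice} shows that the canonical multiplication $\mu^{(\mathbf{Forget}_A,\id)}$ in $\Slice$ has underlying functor the usual tensor product of $A$-modules; applying $Q$ and invoking the Yoneda argument of Lemma \ref{1EquivLegs} recovers precisely the linear map $A\to A\otimes A$ encoding the $A$-action on tensor products of modules, which is $\Delta$. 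A parallel argument treats the unit $\iota^{(\mathbf{Forget}_A,\id)}$ and $\varepsilon$. Contravariance at the level of 1-morphisms is inherited from Lemma \ref{EndFunctorWellDefined}.

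The main obstacle is mostly bookkeeping: tracking the associators relating the Deligne tensor product to the ordinary tensor product of vector spaces so that the bialgebra axioms on $(\Delta,\varepsilon)$ translate into the monoidality of $\mathbf{Forget}_A$, and conversely. However, this bookkeeping was already absorbed into Construction \ref{Monoidal1SliceConstruction} and Proposition \ref{1MonoidIn1Slice}, so no further substantive computation is required and the corollary follows.
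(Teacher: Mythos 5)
Your proposal is correct and follows essentially the same route as the paper, which states this corollary without a separate written proof precisely because it is the composite of Corollary \ref{1CategoriesOfMonoidsEquivalent} (the monoidal equivalence between monoids in $\Slice$ and bialgebras) with Proposition \ref{1MonoidIn1Slice} (the identification of monoids in $\Slice$ with pairs $(C,F)$ where $F$ is monoidal). Your additional check that the monoid structure on $(\Mod(A),\textbf{Forget}_A)$ induced by $\Delta$ is the standard one is a sensible explicit verification of what the paper absorbs into the statement itself.
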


	\subsection{Duals}
	Finally, we enhance the above to the following: 
	
	\begin{thm}\label{1BialgReconstruction}
		The functors $(C, F) \mapsto \End(F)$ and $A \to (\Mod(A), \textbf{Forget}_A)$ are contravariant monoidal equivalences between the category of semisimple Hopf algebras and $\sf{Fus1Cat}/\fdVec$.
	\end{thm}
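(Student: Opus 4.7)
The plan is to restrict the bialgebra equivalence established in Corollary \ref{1CategoriesOfMonoidsEquivalent} and Proposition \ref{1MonoidIn1Slice} to appropriate full subcategories on each side; the theorem then reduces to showing that under that equivalence, semisimple Hopf algebras correspond exactly to objects of $\sf{Fus1Cat}/\fdVec$, and morphisms of Hopf algebras to morphisms in $\sf{Fus1Cat}/\fdVec$.

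For the object-level correspondence in the forward direction, if $A$ is a semisimple Hopf algebra, then $A$ is finite-dimensional and $\Mod(A)$ is fusion: it is finite semisimple by hypothesis, and rigid with dual $(V, \rho)^\vee := (V^*, \rho^* \circ S)$ equipped with the standard evaluation and coevaluation of $\fdVec$; the forgetful functor lands in $\fdVec$ because $A$ is finite-dimensional. For the reverse direction, given $(C, F) \in \sf{Fus1Cat}/\fdVec$, I would construct the antipode on $\End(F)$ directly: since $F$ is a monoidal functor between rigid monoidal categories, for every $c \in C$ there is a canonical isomorphism $\kappa_c \colon F(c^\vee) \xrightarrow{\sim} F(c)^*$ in $\fdVec$ built from the tensorator of $F$ and the duality data of $c$, and one defines $S \colon \End(F) \to \End(F)$ by $S(\phi)_c := \kappa_c^{-1} \circ (\phi_{c^\vee})^* \circ \kappa_c$.

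For the morphism-level correspondence, a bialgebra morphism between Hopf algebras commutes with antipodes automatically, and the induced morphism via $\Mod(-)$ lies in $\sf{Fus1Cat}/\fdVec$. Conversely, a morphism $(T, \tau)$ in $\sf{Fus1Cat}/\fdVec$ has $T$ preserving duals up to canonical isomorphism (as duals in any rigid monoidal category are unique up to unique isomorphism), and the associated bialgebra map from Corollary \ref{1CategoriesOfMonoidsEquivalent} then intertwines the antipodes, again by uniqueness of $S$.

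The main obstacle is verifying that the $S$ constructed on $\End(F)$ is a well-defined natural transformation and satisfies the antipode identity $S * \id = \iota \epsilon = \id * S$ with respect to the convolution coming from the bialgebra structure of Proposition \ref{1MonoidIn1Slice}. Naturality in $c$ follows from naturality of $\kappa$; the antipode identity reduces, upon applying legs at $c$ and using the explicit description of the comultiplication on $\End(F)$ through the Deligne tensor product, to the triangle identities for $(c, c^\vee)$ in $C$ together with the monoidality of $F$. Monoidality of $F$ is indispensable: without it $\End(F)$ would not carry a bialgebra structure at all, and the convolution product defining the antipode identity would not exist.
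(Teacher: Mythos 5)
Your proposal is correct and follows essentially the same route as the paper: duals on $\Mod(A)$ are induced by the antipode, the antipode on $\End(F)$ is obtained by conjugating the transpose at the dual object through the canonical isomorphism $F(c^\vee)\cong F(c)^*$ built from the tensorator and duality data (the paper's $\delta$), the antipode identity is verified from the zigzag identities together with monoidality of $F$, and uniqueness of duals/antipodes handles the morphism level. (One notational slip: as written $S(\phi)_c := \kappa_c^{-1}\circ(\phi_{c^\vee})^*\circ\kappa_c$ does not typecheck, since $(\phi_{c^\vee})^*$ is an endomorphism of $F(c^\vee)^*$; you should conjugate by $\kappa_c^*$, or equivalently transpose the conjugate as in the paper's $S(\eta)_c=(\delta^{-1}\eta_{{}^*c}\delta)^*$.)
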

	\begin{proof}
		The uniqueness properties of duals and antipodes imply that we need only show the existence of these structures. If $H$ is a finite semisimple Hopf algebra with antipode $S$, it is well known that $\Mod(H)$ has left and right duals. Let $\rho \colon H \to \End(V)$ be a representation. Then the representations $^*\rho$ and $\rho^*$ defined on $\End(^*V)$ and $\End(V^*)$ respectively are given by
		\begin{align*}
			\rho^* &= ~ ~(-)^*  \circ \rho \circ S\\
			^*\rho &= ~ ^*(-) \circ \rho \circ S^{-1},
		\end{align*}using the left and right dual functors of $\Vec$. The antialgebra homomorphism properties of $S$ and $(-)^*$ conspire to ensure these maps are algebra homomorphisms. Now, given a pair $(C, F)$ where $C$ has left and right dual functors , we use the left dual functor to induce a morphism in $\Slice$: 
		
		\[\begin{tikzcd}
			{C^{0,1\text{op}}} && C \\
			\\
			& \Vec
			\arrow["{c \mapsto ^*c }", from=1-1, to=1-3]
			\arrow["F", from=1-3, to=3-2]
			\arrow[""{name=0, anchor=center, inner sep=0}, "{^*F}"', from=1-1, to=3-2]
			\arrow["\delta"', shorten <=8pt, Rightarrow, from=0, to=1-3]
		\end{tikzcd}\]
		from which we obtain a map $\End(F)  \to \End( ^*(-) \circ F)$. The 2-morphism $\delta$ is the canonical equivalence $F(^*c) \simeq ^*F(c^*)$. Whiskering with the right dual functor and using the isomorphism $(^*(-))^* \circ \cF \Rightarrow \cF$ we obtain an anti(co)algebra homomorphism $S \colon \End(F) \to \End(F)$, which has the formula
		$$S(\eta)_{c} = (\delta^{-1}\eta_{^*c}\delta)^*$$
		We expand slightly on \cite[Prop. 5.3.1]{MR3242743} for the verification that this assignment satisfies the antipode axiom. We will represent the morphism $\delta$ by $\bullet$, and make no graphical distinction between $\delta$ and $\delta^{-1}$. No confusion is possible, since only one of the morphisms $\delta, \delta^{-1}$ will type check.
		We claim
		\begin{equation} \label{1AntipodeAxiomSnake}
			(\mu \circ (1 \otimes S)\circ \Delta)(\eta) = 
			\tikzmath{
				\draw[thick] (0,-1) -- (0, 1.5)  arc (180:0:.5cm) -- (1,.5)  arc (-180:0:.5cm) -- (2, 2.666);
				\roundNbox{fill=white}{(.5,1)}{.4}{.5}{.5}{${\Delta(\eta)_{c \otimes {}^*c}}$};
				\filldraw[black] (1,1.5) circle (1.5pt);
				\filldraw[black] (1,.5) circle (1.5pt);
			},    
		\end{equation}
		which is clear upon evaluating $(\mu \circ (1 \otimes S))(\eta_{(1)} \otimes \eta_{(2)})$, using Sweedler notation. Additionally, we have the following equation relating $F(\ev_c)$  with $\ev_{F(c)}$, up to an instance of the isomorphism $\varphi \colon 1 \to F(1)$, which is henceforth suppressed. 
		
		\begin{equation} \label{1CatDualReplacement}
			\tikzmath{
				\draw[thick] (0, .5) -- (0, 1.5)  arc (180:0:.5cm) -- (1,.5);
				\filldraw[black] (1,1.5) circle (1.5pt);
			} \quad  =\quad  
			\tikzmath{
				\draw[thick] (0, .5) -- (0, 1);.5);
				\draw[thick] (.3, 1) -- (.3, 1.5)  arc (180:0:.2cm) -- (.7,1);
				\draw[thick] (1,1) -- (1,.5);
				\roundNbox{fill=white}{(.5,1)}{.3}{.4}{.4}{${J_{c, ^*c}}$};
				\roundNbox{fill=white}{(.5,1.85)}{.3}{.4}{.4}{${F(\ev_X)}$};  
			}
		\end{equation}
		Separated strings are tensored in the target while close strings were tensored in the domain. In particular, $J$ brings two separated strings together. 		
		The verification of the antipode axiom is the following graphical manipulation, valid by naturality of $\eta$. Here, we omit subscripts on $J, \eta, \ev$ to avoid clutter.
 		\begin{equation} \label{1AntipodeAxiomVerification}
			\tikzmath{
				\draw[thick] (0,-1) -- (0, 1.5)  arc (180:0:.5cm) -- (1,.5)  arc (-180:0:.5cm) -- (2, 2.666);
				\roundNbox{fill=white}{(.5,1)}{.4}{.5}{.5}{${\Delta(\eta)}$};
				\filldraw[black] (1,1.5) circle (1.5pt);
				\filldraw[black] (1,.5) circle (1.5pt);
			}  
			~~ \underset{\text{def}, \eqref{1CatDualReplacement}}{=} ~~ 
			\tikzmath{
				\draw[thick] (0,-1) -- (0, .4);
				\draw[thick] (0, -1) -- (0, .4);
				\draw[thick] (.3, .4) -- (.3, 1.5)  arc (180:0:.2cm) -- (.7,.4);
				\draw[thick] (1,.4) -- (1,.0)  arc (-180:0:.5cm) -- (2, 2.666);
				\roundNbox{fill=white}{(.5,1)}{.2}{.5}{.5}{${\eta}$};
				\roundNbox{fill=white}{(.5,.4)}{.2}{.5}{.5}{${J}$};
				\roundNbox{fill=white}{(.5, 1.6)}{.3}{.4}{.3}{${F(\ev)}$};
				\filldraw[black] (1,0) circle (1.5pt);
			} 
			~~ \underset{\text{nat.~} \eta}{=} ~~ 
			\tikzmath{
				\draw[thick] (0,-1) -- (0, .4);
				\draw[thick] (0, -1) -- (0, .4);
				\draw[thick] (.3, .4) -- (.3, 1.5)  arc (180:0:.2cm) -- (.7,.4);
				\draw[thick] (1,.4) -- (1,.0)  arc (-180:0:.5cm) -- (2, 2.666);
				\roundNbox{fill=white}{(.5,2.3)}{.25}{0}{0}{${\eta}$};
				\roundNbox{fill=white}{(.5,.4)}{.2}{.5}{.5}{${J}$};
				\roundNbox{fill=white}{(.5, 1.5)}{.3}{.4}{.3}{${F(\ev)}$};
				\filldraw[black] (1,0) circle (1.5pt);
			} 
			~~ \underset{\eqref{1CatDualReplacement}}{=} ~~ 
			\tikzmath
			{
				\draw[thick] (0,-1) -- (0, 1.2)  arc (180:0:.5cm) -- (1,.5)  arc (-180:0:.5cm) -- (2, 2.666);
				\roundNbox{fill=white}{(.5,2.3)}{.25}{0}{0}{${\eta}$};
				\filldraw[black] (1,1.2) circle (1.5pt);
				\filldraw[black] (1,.5) circle (1.5pt);
			} 
			~~ = ~~  
			\tikzmath{
				\roundNbox{fill=white}{(.5,.833)}{.25}{0}{0}{${\eta}$};
				\draw[thick] (1, -1) -- (1, 2.666);
			}
		\end{equation}
		
		The other half of the antipode axiom is proven similarly. Finally, since $C$ has a right dual, then $S$ is invertible, and thus $\End(F)$ is a Hopf algebra.
	\end{proof}
	

\section{Reconstruction for 2-categories} \label{2CategoryReconstructionSection}
We categorify the approach and results of the previous section.  We refer the reader to \cite{johnson20202dimensional} for general background on 2-categories and \cite{barrett2018gray, Jones_2022} for explanations of the graphical calculus we use for functors between monoidal 2-categories.
\subsection{Monoidal Slice 2-categories and 2-coalgebra structures} 
\begin{defn}
	We define the 3-category $\TwoSlice$ as the full subcategory of $\ThreeVec/\TwoVec$ consisting of locally faithful functors. Unpacked, the 2-category $\TwoSlice$ has the following $\Hom$-sets.  
	\begin{itemize}
		\item Objects $(\cC, \cF)$ where $\cC$ is a finite semisimple 2-category and $\cF$ is a locally faithful linear 2-functor $\cC \to \TwoVec$. 
		\item A 1-morphism from $(\cC, \cF)$ to $(\cD, \cG)$ is a pair $(\cT, \tau)$ where $\cT \colon \cC \to \cD$ is a $\cT$ linear 2-functor and $\tau$ is a natural equivalence $\cF \Rightarrow \cG\cT$. 
		\item A 2-morphism from $(\cT, \tau)$ to $(\cT', \tau')$ is a pair $(\sigma, \Sigma)$ where $\sigma$ is a natural equivalence $\cT' \Rightarrow \cT$ and $\Sigma$ is an invertible modification as below:
		\[\begin{tikzcd}
			\cC && \cD && \cC && \cD \\
			&& {} && {} \\
			& \TwoVec &&&& \TwoVec
			\arrow["\cT", from=1-1, to=1-3]
			\arrow[""{name=0, anchor=center, inner sep=0}, "\cF"', from=1-1, to=3-2]
			\arrow["\cG", from=1-3, to=3-2]
			\arrow["\cG", from=1-7, to=3-6]
			\arrow[""{name=1, anchor=center, inner sep=0}, "{\cT'}"', curve={height=12pt}, from=1-5, to=1-7]
			\arrow[""{name=2, anchor=center, inner sep=0}, "\cF"', from=1-5, to=3-6]
			\arrow[""{name=3, anchor=center, inner sep=0}, "\cT", curve={height=-12pt}, from=1-5, to=1-7]
			\arrow["\Sigma", triple, from=2-3, to=2-5]
			\arrow["\tau"', shorten <=16pt, shorten >=16pt, Rightarrow, from=0, to=1-3]
			\arrow["{\tau'}"'{pos=0.3}, shift right=4, shorten <=8pt, shorten >=24pt, Rightarrow, from=2, to=1-7]
			\arrow["\sigma"', shorten <=3pt, shorten >=3pt, Rightarrow, from=1, to=3]
		\end{tikzcd}\]
		which is also expressible as
		\[\begin{tikzcd}
			\cT && {\cG\cT'} \\
			\\
			& \cG\cT
			\arrow["\cG\sigma"', Rightarrow, to=3-2, swap, from=1-3]
			\arrow["\tau'", Rightarrow, from=1-1, to=1-3]
			\arrow[""{name=0, anchor=center, inner sep=0}, "\tau"', Rightarrow, from=1-1, to=3-2]
			\arrow["\Sigma"', triple, shorten <=19pt, shorten >=19pt, from=0, to=1-3]
		\end{tikzcd}\]
		\item A 3-morphism from $(\Sigma, \sigma)$ to $(\Sigma', \sigma')$ is a modification $\Theta \colon \sigma' \Rrightarrow \sigma$ such that: 
		\[\begin{tikzcd}
			\cT && {\cG\cT'} && \cT && \cG\cT' \\
			&&& {=} \\
			& \cG\cT &&&& \cG\cT
			\arrow["\cG\sigma"', Rightarrow, from=3-2, to=1-3]
			\arrow["{\tau'}", Rightarrow, from=1-1, to=1-3]
			\arrow[""{name=0, anchor=center, inner sep=0}, "\tau"', Rightarrow, from=1-1, to=3-2]
			\arrow[""{name=1, anchor=center, inner sep=0}, "\tau"', Rightarrow, from=1-5, to=3-6]
			\arrow["{\tau'}", Rightarrow, from=1-5, to=1-7]
			\arrow[""{name=2, anchor=center, inner sep=0}, "{\cG\sigma'}"'{pos=0.7}, curve={height=15pt}, Rightarrow, from=1-7, to=3-6]
			\arrow[""{name=3, anchor=center, inner sep=0}, "\cG\sigma", curve={height=-15pt}, Rightarrow, from=1-7, to=3-6]
			\arrow["\Sigma"', shorten <=19pt, triple, shorten >=19pt, from=0, to=1-3]
			\arrow["{\Sigma'}"{pos=0.3}, shift left=3, triple, shorten <=14pt, shorten >=24pt, from=1, to=1-7]
			\arrow["{\cG\Theta}", triple, shorten <=5pt, shorten >=5pt, from=2, to=3]
		\end{tikzcd}\]
	\end{itemize}
	The directions of composition may at first seem strange; compare \cite{Jones_2022}, Definition 3.2. 
\end{defn}The following lemma is proven identically to Lemma \ref{1Slice1Truncated}, using local faithfulness of $\cG$ and local semisimplicity of $\cD$.
\begin{lem}
	$\TwoSlice$ is 2-truncated.
\end{lem}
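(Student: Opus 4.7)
The plan is to mirror the proof of Lemma \ref{1Slice1Truncated} one dimension up, invoking the two ingredients the author flags: local faithfulness of $\cG$ and local semisimplicity of $\cD$. Fix a parallel pair $(\sigma,\Sigma),(\sigma',\Sigma') \colon (\cT,\tau) \Rightarrow (\cT',\tau')$ of 2-morphisms and let $\Theta \colon \sigma' \Rrightarrow \sigma$ be a 3-morphism between them. Unpacking the pasting equation in the definition of a 3-morphism at each object $c \in \cC$ produces the 2-cell equation
\[
\Sigma_c \;=\; \bigl(\cG(\Theta_c) \ast \tau'_c\bigr) \circ \Sigma'_c
\]
inside $\Hom_{\TwoVec}(\cF(c), \cG\cT(c))$, where $\ast$ denotes whiskering and $\circ$ vertical composition.

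Since $\Sigma$ and $\Sigma'$ are invertible modifications and $\tau'_c$ is a 1-equivalence, this equation solves to
\[
\cG(\Theta_c) \;\cong\; \bigl(\Sigma_c \circ (\Sigma'_c)^{-1}\bigr) \ast (\tau'_c)^{-1},
\]
whose right-hand side depends only on the ambient 2-morphism data. Local faithfulness of $\cG$ at the level of 2-cells then forces $\Theta_c$ to be uniquely determined, establishing uniqueness of $\Theta$ as a modification (if it exists).

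For invertibility, the displayed formula exhibits $\cG(\Theta_c)$ as a composite of invertible 2-cells, hence an isomorphism. Local semisimplicity of $\cD$ means the hom 1-category $\Hom_\cD(\cT'(c), \cT(c))$ is semisimple, in particular balanced, so Lemma \ref{FaithfulBalancedImpliesConservative} applied to the faithful linear functor induced by $\cG$ between these hom 1-categories implies $\Theta_c$ is itself an isomorphism. This yields invertibility of $\Theta$ at each object and hence as a modification.

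There is no genuine obstacle beyond bookkeeping: the main thing to get right is translating the 3-cell pasting diagram into the correct component equation while tracking the slightly unusual convention $\sigma \colon \cT' \Rightarrow \cT$ and the placement of $\tau$ versus $\tau'$. Once this is handled, the argument is a direct transcription of the 1-categorical proof, since the only properties of the target used there (faithfulness of the forgetful-type functor and balancedness of the target) have exact analogues one level up.
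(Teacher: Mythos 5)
Your proof is correct and follows essentially the same route as the paper, which simply observes that the 3-morphism axiom forces $\cG(\Theta)$ to equal $\Sigma(\Sigma')^{-1}$ pointwise (after cancelling the whiskering by the equivalence $\tau'$), so that local faithfulness of $\cG$ pins down $\Theta$ uniquely and the balanced/conservative argument of Lemma \ref{FaithfulBalancedImpliesConservative} makes it invertible. Your identification of the relevant semisimple hom-category as $\Hom_\cD(\cT'(c),\cT(c))$ is the right reading of the paper's slightly inconsistent attributions ($\cD$ in the main text, $\cV$ in the appendix).
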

\begin{lem} \label{2FunctorAxiomsForQ}With notation as above, the assignments
	\begin{itemize}
		\item $\cQ(\cC, \cF) \coloneqq \End(\cF) $
		\item $\cQ(\cT, \tau)(\eta) \coloneqq \tau^{-1}\eta\tau$
		\item $\cQ(\sigma, \Sigma) \coloneqq$  $$ \tikzmath{
			\draw[thick] (0,0) -- (0,4);
			\roundNbox{fill=white}{(0,1)}{.3}{.05}{.05}{${\tau}$};
			\roundNbox{fill=white}{(0,2)}{.3}{.05}{.05}{${\eta}$};
			\roundNbox{fill=white}{(0,3)}{.3}{.05}{.05}{${\tau^{-1}}$};
		} \quad \underset{\Sigma}{\Rightarrow} \quad 
		\tikzmath{
			\draw[thick] (0,0) -- (0,4);
			\roundNbox{fill=white}{(0,0.5)}{.3}{.05}{.05}{${\tau'}$};
			\roundNbox{fill=white}{(0,1.5)}{.3}{.05}{.05}{${\cF\sigma}$};
			\roundNbox{fill=white}{(0,2.5)}{.3}{.05}{.05}{${\eta}$};
			\roundNbox{fill=white}{(0,3.5)}{.3}{.05}{.05}{${\tau^{-1}}$};
		}
		\quad \underset{\eta_{F\sigma}}{\Rightarrow} \quad 
		\tikzmath{
			\draw[thick] (0,0) -- (0,4);
			\roundNbox{fill=white}{(0,0.5)}{.3}{.05}{.05}{${\tau'}$};
			\roundNbox{fill=white}{(0,2.5)}{.3}{.05}{.05}{${\cF\sigma}$};
			\roundNbox{fill=white}{(0,1.5)}{.3}{.05}{.05}{${\eta}$};
			\roundNbox{fill=white}{(0,3.5)}{.3}{.05}{.05}{${\tau^{-1}}$};
		}
		\quad \underset{\Sigma^{-1}}{\Rightarrow} \quad 
		\tikzmath{
			\draw[thick] (0,0) -- (0,4);
			\roundNbox{fill=white}{(0,1)}{.3}{.05}{.05}{${\tau'}$};
			\roundNbox{fill=white}{(0,2)}{.3}{.05}{.05}{${\eta}$};
			\roundNbox{fill=white}{(0,3)}{.3}{.25}{.25}{${(\tau')^{-1}}$};
		}
		$$
		\item $\cQ(\Theta) = \id_{\cQ(\sigma, \Sigma)}$ 
	\end{itemize}
	form a 2-functor $\TwoSlice^{1op} \to \TwoAlg$. 
\end{lem}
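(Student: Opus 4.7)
My plan is to verify the 2-functor data level by level. Because $\TwoSlice$ is 2-truncated and $\TwoAlg$ is merely a 2-category (so has trivial 3-cells), it is enough to produce each assignment and to check all axioms up to a necessarily unique invertible 3-cell. In particular, the clause $\cQ(\Theta) = \id_{\cQ(\sigma,\Sigma)}$ is well-posed once we show $\cQ(\sigma,\Sigma) = \cQ(\sigma',\Sigma')$ whenever a 3-morphism $\Theta \colon \sigma' \Rrightarrow \sigma$ satisfying the pasting equation in the definition exists.

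On objects, $\End(\cF)$ is a monoidal category under horizontal composition of 2-natural endotransformations with unit $\id_\cF$, so it is an object of $\TwoAlg$. On 1-morphisms, for $(\cT,\tau) \colon (\cC,\cF) \to (\cD,\cG)$ the rule $\eta \mapsto \tau^{-1} \circ (\eta\cT) \circ \tau$ is functorial on the hom-categories of $\End(\cG)$ (by whiskering modifications with $\cT$ and with $\tau^{\pm 1}$) and carries horizontal composition of endotransformations to horizontal composition via the canonical ``pop'' isomorphism $\tau \circ \tau^{-1} \simeq \id$ that appears between consecutive conjugates; this is directly parallel to the 1-categorical case of Lemma~\ref{EndFunctorWellDefined}. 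The resulting invertible modification supplies the tensorator, and invertibility of $\tau$ gives the unitor, so $\cQ(\cT,\tau)$ is a monoidal functor $\End(\cG) \to \End(\cF)$, contravariant in $(\cT,\tau)$.

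For 2-morphisms I verify that the displayed three-step composite is a monoidal modification $\cQ(\cT,\tau) \Rightarrow \cQ(\cT',\tau')$. Naturality in $\eta$ is explicit in the construction: the outer two steps replace $\tau$ and $\tau^{-1}$ by $\tau'$ and $(\tau')^{-1}$ using $\Sigma$ and $\Sigma^{-1}$, while the middle step invokes ordinary naturality of $\eta$ against the 2-cell $\cF\sigma$. Monoidality against the pop tensorator reduces to the fact that $\cF\sigma$ is a natural transformation and therefore slides through horizontal composition of endotransformations. For 3-morphisms, the defining equation involving $\cG\Theta$ rewrites the instance of $\Sigma$ in the first (equivalently, last) step of $\cQ(\sigma,\Sigma)$ into $\Sigma'$ and simultaneously replaces the middle $\cF\sigma$ by $\cF\sigma'$, producing the needed equality $\cQ(\sigma,\Sigma) = \cQ(\sigma',\Sigma')$.

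The main obstacle is assembling the 2-functor compositors governing horizontal composition of 1-morphisms $(\cT_2,\tau_2) \circ (\cT_1,\tau_1)$: one must produce an invertible modification comparing iterated conjugation by $\tau_1$ and by $\tau_2 \cT_1$ with conjugation by the composite equivalence $(\tau_2 \cT_1) \circ \tau_1$, and check that it satisfies the appropriate pentagon against the tensorator from the previous paragraph. The construction is formal via the graphical calculus of \cite{barrett2018gray}, and the 2-truncation of both source and target collapses all resulting coherences above associativity of these compositors to tautologies, so no genuinely higher diagram chasing is required.
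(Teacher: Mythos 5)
Your proposal matches the paper's proof in both substance and strategy: the paper likewise treats the object-, 1-morphism-, and compositor-level verifications as routine and devotes its argument to showing that the existence of a 3-morphism $\Theta$ forces $\cQ(\sigma,\Sigma)=\cQ(\sigma',\Sigma')$, via exactly the rewriting you describe (the defining pasting equation of $\Theta$ handles the outer $\Sigma$/$\Sigma'$ replacements and the modification axiom for $\cF\Theta$ handles the middle naturality step). The extra detail you supply on the monoidal structure of $\End(\cF)$, the tensorator of $\cQ(\cT,\tau)$, and the truncation argument for coherence is consistent with what the paper leaves implicit.
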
 
\begin{proof}
	We show that $\cQ(\Theta)$ is well defined; the remaining verifications are routine. The commutative diagram is 
	\renewcommand{\roundNbox}[6]{
		\draw[rounded corners=5pt, very thick, #1] ($#2+(-#3,-#3)+(-#4,0)$) rectangle ($#2+(#3,#3)+(#5,0)$);
		\coordinate (ZZa) at ($#2+(-#4,0)$);
		\coordinate (ZZb) at ($#2+(#5,0)$);
		\node[label={[yshift=-0.4cm]#6}] at ($1/2*(ZZa)+1/2*(ZZb)$) {};
	}
	\[\hspace{-1.25cm}\begin{tikzpicture}[baseline= (a).base] \node (a) at (0,0){
			\begin{tikzcd}[sep = large]
				\tikzmath{
					\draw[thick] (0,0) -- (0,4);
					\roundNbox{fill=white}{(0,1)}{.3}{.05}{.05}{${\tau}$};
					\roundNbox{fill=white}{(0,2)}{.3}{.05}{.05}{${\eta}$};
					\roundNbox{fill=white}{(0,3)}{.3}{.05}{.05}{${\tau^{-1}}$};
				} \ar[r, Rightarrow, "\Sigma"] \ar[d,Rightarrow, no head]
				&	
				\tikzmath{
					\draw[thick] (0,0) -- (0,4);
					\roundNbox{fill=white}{(0,0.5)}{.3}{.05}{.05}{${\tau'}$};
					\roundNbox{fill=white}{(0,1.5)}{.3}{.05}{.05}{${\cF\sigma}$};
					\roundNbox{fill=white}{(0,2.5)}{.3}{.05}{.05}{${\eta}$};
					\roundNbox{fill=white}{(0,3.5)}{.3}{.05}{.05}{${\tau^{-1}}$};
				} \ar[r, Rightarrow, "\eta_{\cF\sigma}"] \ar[d,Rightarrow, "\cF\Theta"]
				&
				\tikzmath{
					\draw[thick] (0,0) -- (0,4);
					\roundNbox{fill=white}{(0,0.5)}{.3}{.05}{.05}{${\tau'}$};
					\roundNbox{fill=white}{(0,2.5)}{.3}{.05}{.05}{${\cF\sigma}$};
					\roundNbox{fill=white}{(0,1.5)}{.3}{.05}{.05}{${\eta}$};
					\roundNbox{fill=white}{(0,3.5)}{.3}{.05}{.05}{${\tau^{-1}}$};
				}  \ar[r, Rightarrow, "\Sigma^{-1}"] \ar[d, Rightarrow, "\cF\Theta"]
				&
				\tikzmath{
					\draw[thick] (0,0) -- (0,4);
					\roundNbox{fill=white}{(0,1)}{.3}{.05}{.05}{${\tau'}$};
					\roundNbox{fill=white}{(0,2)}{.3}{.05}{.05}{${\eta}$};
					\roundNbox{fill=white}{(0,3)}{.35}{.225}{.225}{${(\tau')^{-1}}$};
				}  \ar[d, Rightarrow, no head]\\
				\tikzmath{
					\draw[thick] (0,0) -- (0,4);
					\roundNbox{fill=white}{(0,1)}{.3}{.05}{.05}{${\tau}$};
					\roundNbox{fill=white}{(0,2)}{.3}{.05}{.05}{${\eta}$};
					\roundNbox{fill=white}{(0,3)}{.3}{.05}{.05}{${\tau^{-1}}$};
				}  \ar[r, Rightarrow, "\Sigma'"] 
				&
				\tikzmath{
					\draw[thick] (0,0) -- (0,4);
					\roundNbox{fill=white}{(0,0.5)}{.3}{.05}{.05}{${\tau'}$};
					\roundNbox{fill=white}{(0,1.5)}{.3}{.05}{.05}{${\cF\sigma'}$};
					\roundNbox{fill=white}{(0,2.5)}{.3}{.05}{.05}{${\eta}$};
					\roundNbox{fill=white}{(0,3.5)}{.3}{.05}{.05}{${\tau^{-1}}$};
				}  \ar[r, Rightarrow, "\eta_{F\sigma'}"]
				&
				\tikzmath{
					\draw[thick] (0,0) -- (0,4);
					\roundNbox{fill=white}{(0,0.5)}{.3}{.05}{.05}{${\tau'}$};
					\roundNbox{fill=white}{(0,2.5)}{.3}{.05}{.05}{${F\sigma'}$};
					\roundNbox{fill=white}{(0,1.5)}{.3}{.05}{.05}{${\eta}$};
					\roundNbox{fill=white}{(0,3.5)}{.3}{.05}{.05}{${\tau^{-1}}$};
				} \ar[r, Rightarrow, "(\Sigma')^{-1}"]
				&
				\tikzmath{
					\draw[thick] (0,0) -- (0,4);
					\roundNbox{fill=white}{(0,1)}{.3}{.05}{.05}{${\tau'}$};
					\roundNbox{fill=white}{(0,2)}{.3}{.05}{.05}{${\eta}$};
					\roundNbox{fill=white}{(0,3)}{.35}{.225}{.225}{${(\tau')^{-1}}$};
				} 
		\end{tikzcd}};
	\end{tikzpicture}\]
	The outer squares commute by definition and the inner square is the modification axiom for $\cF\Theta$. We see that if there exists $\Theta \colon (\Sigma, \sigma) \Rrightarrow (\Sigma', \sigma')$ then $\cQ (\Sigma, \sigma) = \cQ  (\Sigma', \sigma')$.
\end{proof}
\begin{lem}\label{2EquivLegs}
	The functors $\cQ$ and $\Mod(-)$ are inverse equivalences between $\TwoSlice$ and $\TwoAlg$.
\end{lem}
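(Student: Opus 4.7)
The plan is to categorify the proof of Lemma \ref{1EquivLegs} by constructing natural equivalences \(\gamma \colon \id_{\TwoAlg} \Rightarrow \cQ \circ \Mod\) and \(\zeta \colon \id_{\TwoSlice} \Rightarrow \Mod \circ \cQ\), mirroring the approach there but replacing Yoneda/Morita inputs with their 2-categorical analogues. Because both \(\TwoSlice\) and \(\TwoAlg\) are 2-truncated, it suffices to construct these equivalences component-wise, verify their naturality up to a (necessarily unique invertible) 3-cell, and produce equivalences at every object.

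For \(\gamma\), fix a 2-algebra \(A\) with forgetful functor \(\cF_A \colon \Mod(A) \to \TwoVec\). I would construct a 2-algebra morphism \(A \to \End(\cF_A)\) by sending \(a \in A\) to the natural transformation whose component at a module \((\cM, \rho)\) is the action \(\rho(a)\). To see this is an equivalence, I would invoke the fact that \(\Mod(A)\) is the Cauchy (idempotent/adjoint) completion of the one-object 2-category \(\mathbf{B}A\), so restriction gives \(\End(\cF_A) \simeq \End(\cF_A|_{\mathbf{B}A})\); since \(\cF_A|_{\mathbf{B}A}\) is representable by the single object, the 2-categorical Yoneda lemma then identifies this endomorphism 2-algebra with \(A\) itself. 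Naturality of \(\gamma\) in 2-algebra morphisms, 2-cells, and 3-cells is automatic once the assignment is expressed in this representable form.

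For \(\zeta\), I would send \((\cC, \cF)\) to the pair \((\zeta_\cC, \id)\), where \(\zeta_\cC \colon \cC \to \Mod(\End(\cF))\) is defined on objects by \(c \mapsto (\cF(c), \text{leg at } c)\), on 1-morphisms by \(f \mapsto \cF(f)\) with its canonical intertwining structure, and on 2-morphisms likewise. To show \(\zeta_\cC\) is an equivalence, pick an equivalence \(\cC \simeq \Mod(A)\) for some 2-algebra \(A\) (using that \(\Mod(-)\) gives an equivalence from the Morita 3-category of 2-algebras to \(\ThreeVec\)), and a bimodule \({}_A\cM\) realizing \(\cF\) up to this equivalence. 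Local faithfulness of \(\cF\) translates into faithfulness of \(\cM\) as an \(A\)-module, and then a 2-categorical analogue of the double centralizer theorem identifies \(\End(\cF) \simeq \End_A({}_A\cM)\) in such a way that \({}_A\cM\) induces a Morita equivalence \(A \simeq \End_A({}_A\cM)\); this is precisely what is needed to conclude that \(\zeta_\cC\) is an equivalence, exactly as in the 1-categorical proof.

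The main obstacle will be the honest verification of the 2-categorical Yoneda lemma and double centralizer theorem in the enriched, semisimple setting required here, together with the bookkeeping needed to produce the coherent modifications witnessing pseudo-naturality of \(\gamma\) and \(\zeta\). The truncation results of the excerpt (and the remark that the composition functors and structure are to be checked only up to a unique invertible top cell) let me suppress most coherence data, but the core representability/centralizer inputs are genuinely substantive and will have to be invoked or proved carefully — one can expect these to appear in the appendix rather than in the main body.
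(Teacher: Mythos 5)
Your proposal matches the paper's proof essentially step for step: the unit $\gamma$ is built from module actions and identified as an equivalence via Cauchy completion of $\mathbf{B}A$ plus the 2-Yoneda lemma, and the counit $\zeta_\cC \colon c \mapsto (\cF(c), \End(\cF)|_c)$ is shown to be an equivalence by passing through the Morita triequivalence $\Mod(-)$, using local faithfulness of $\cF$ to get faithfulness of the bimodule $_\cC\cM_{\TwoVec}$, and invoking the categorified double centralizer (bicommutant) theorem. The only difference is one of sourcing: the paper outsources the substantive inputs you flag (the triequivalence, the bicommutant criterion, and the double centralizer statement) to the literature rather than proving them in its appendix.
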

\begin{proof}
	For every monoidal category $C$, we require a 2-natural equivalence from $C \to \End(\textbf{Forget}_C) = \End(\cF_C)$. Let $(M, \rho, m)$ be a $C$-module and $(F, s) $ a $C-$module functor.
	We define the natural map $\cY \colon C \to \End(\cF_C)$ on objects $c, c'$ and a morphism $ f\colon c \to c'$ as   
	\begin{itemize}
		\item $\cY(c)_M \coloneqq \cF(\rho(c)) \colon \cF(M) \to \cF(M)$
		\item $\cY(c)_F \coloneqq \cF(s)$
		\item $\cY(f) \coloneqq \cF(\rho(f)) \colon \cF f(c) \Rightarrow \cF f(c')$
	\end{itemize}
	This map is a 2-natural isomorphism by the linear 2-Yoneda lemma, and the fact that $\Mod(C)$ is the Cauchy completion of $\textbf{B}C$ \cite[Example 2.1.10]{MR4372801}. That is, restriction to $\mathbf{BC}$ provides an equivalence $\End(F)$ to $\End(\Hom(*, -))$. The 2-Yoneda lemma then provides $\End(\Hom(*, -)) \cong C$.  The module associativity constraint  $m$ makes $\cY$ a monoidal functor. 
	Next, for every monoidal 2-category $\cC$ we require a natural 2-equivalence in $\TwoSlice$:
	\[
	(\cC, \cF) \simeq (\Mod(\End(\cF)), \textbf{Forget}_{\End{\cF}})
	\]
	We choose the manifestly natural maps 
	\begin{itemize}
		\item $c \mapsto (\cF(c), \End(\cF)|_c)$
		\item $f \colon c' \to c' \mapsto \cF(f)$
		\item $\sigma \colon f \Rightarrow f' \mapsto \cF(\sigma)$
	\end{itemize}
	together with the equality 2-morphism. That restriction to $c$ is a monoidal functor $\End(\cF) \to \End(\cF(c))$ is clear. The assigments on 1- and 2-morphisms commute appropriately with the $\End(\cF)$ action by definition of $\End(\cF)$. This morphism is invertible in $\TwoSlice$ if the underlying functor is, by lemma \ref{EquivalenceInTwoSlice}. 
	The key fact is that from every bimodule category $_\cC \cM_\TwoVec$ we may obtain another bimodule $_\cC \cM_{\End_{\Mod_\cC}(_\cC\cM)}$; this procedure corresponds to the map above. It is enough to verify that this bimodule induces a Morita equivalence, since we have by \cite[Lemma 2.2.2]{DECOPPET2022107029} that $\Mod$ is a triequivalence and therefore
	
	\[\Mod(\End(\cF)) \simeq \Mod(\End_{\Mod(\cC)}(_\cC \cM_\TwoVec)) = \Mod(\End_{\Mod(\cC)}(_\cC \cM))\]
	
	Since $\cF$ is locally faithful, the module $\cM$ is faithful. The bimodule $\cM$ induces a Morita equivalence if and only if the bicommutant of the image of $\cC$ is $\cC$ \cite[Prop. 4.2]{MR2677836}. This is precisely \cite[Thm. 7.12.11]{MR3242743}.	
\end{proof}
\begin{rem*}
	Proposition 2.3.1 of \cite{decoppet2023drinfeld} is a further categorification of a result used here to characterize Morita equivalence, which is in turn a categorification of the double centralizer theorem from classical algebra.  
\end{rem*}
Since $\Mod(-)$ is a symmetric monoidal (\ref{ModSymmetric2Monoidal}) equivalence, its pseudoinverse is as well. 
\begin{cor} $\cQ$ has symmetric monoidal structure. 
\end{cor}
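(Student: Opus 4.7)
The plan is to obtain the symmetric monoidal structure on $\cQ$ by transport of structure along the equivalence with $\Mod(-)$, rather than constructing it from scratch. By Lemma \ref{2EquivLegs}, we have inverse equivalences $\cQ \colon \TwoSlice \rightleftarrows \TwoAlg \colon \Mod(-)$ of (underlying) 2-categories, together with a chosen pseudoinverse data (the 2-natural equivalences $\cY$ and the componentwise equivalence $c \mapsto (\cF(c), \End(\cF)|_c)$ constructed in the proof of that lemma). By the cited result \ref{ModSymmetric2Monoidal}, $\Mod(-)$ is symmetric monoidal. Since symmetric monoidal structure transports across an equivalence of (3-)categories in a standard way, the pseudoinverse $\cQ$ inherits a symmetric monoidal structure for free.

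More concretely, I would proceed in the following steps. First, define the tensorator $\cQ(\cC,\cF) \otimes \cQ(\cC', \cF') \to \cQ((\cC,\cF) \boxtimes (\cC', \cF'))$ by conjugating the symmetric monoidal structure on $\Mod(-)$ through the unit and counit of the equivalence from Lemma \ref{2EquivLegs}; in formulas, this reads as the composite
\[
\End(\cF) \otimes \End(\cF') \xrightarrow{\cY \otimes \cY} \cQ\Mod(\End(\cF)) \otimes \cQ\Mod(\End(\cF')) \to \cQ\bigl(\Mod(\End(\cF)) \boxtimes \Mod(\End(\cF'))\bigr) \to \cQ((\cC,\cF) \boxtimes (\cC',\cF')),
\]
where the middle arrow uses the tensorator of $\Mod(-)$ (read via its inverse, $\cQ$) and the last arrow uses the counit equivalence $(\Mod(\End(\cF)), \textbf{Forget}) \simeq (\cC, \cF)$ from the proof of Lemma \ref{2EquivLegs}. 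Similarly define a unit constraint using that $\Mod$ of the unit algebra is $\TwoVec$. Define the associator, unitor, and symmetry constraint modifications analogously.

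Second, verify the coherence axioms (pentagonator, triangulator, hexagonator/syllepsis for the symmetry) by pulling them back to the corresponding axioms for $\Mod(-)$, which hold by hypothesis. Because $\TwoSlice$ is 2-truncated (hence $\TwoAlg$ is as well), all these coherence diagrams live in at most 2-truncated hom-spaces, and the unique invertible 2-cell filling them is obtained by transport. The main subtlety, and essentially the only nontrivial point, is to confirm that transport along an equivalence between symmetric monoidal 3-categories (with one side 2-truncated) really does produce a symmetric monoidal structure on the pseudoinverse satisfying all the coherence axioms; this is the analogue, one categorical level up, of the elementary fact that an equivalence of categories endows the pseudoinverse with any algebraic structure carried by the original. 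I expect this to be the main obstacle not because any single diagram is hard, but because one must either invoke a general transport-of-structure result for symmetric monoidal tricategories or write out the few relevant axioms by hand, using 2-truncation to avoid chasing coherence data past the top dimension.
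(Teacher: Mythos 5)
Your proposal is exactly the paper's argument: the paper derives this corollary from the single observation that $\Mod(-)$ is a symmetric monoidal equivalence (Lemma \ref{ModSymmetric2Monoidal} together with Lemma \ref{2EquivLegs}), so its pseudoinverse $\cQ$ inherits the symmetric monoidal structure by transport. Your more concrete unpacking of the tensorator via the unit and counit of the equivalence is a correct elaboration of the same one-line proof.
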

We recall from Definition 3.1 of \cite{décoppet20212deligne} the 3-universal property of the Deligne 2-tensor product once again specialized to the semisimple case.  
\begin{thm}
	Given $\cC$ and $\cD$ two finite semisimple linear 2-categories, there exists a finite semisimple linear 2-category $\cC \boxdot \cD$ and linear 2-functor $\boxdot \colon \cC \times \cD \to \cC \boxdot \cD$ such that precomposition with $\boxdot$ induces an equivalence 
	\[
	\Hom(\cC \boxdot \cD, \cE) \simeq \Hom_\textit{bilin}(\cC \times \cD, \cE)
	\]
	for all finite $\cE$. This equivalence is natural in all three variables. Unpacked, this means:
	\begin{itemize} 
		\item For every finite 2-category $\cE$ and bilinear bi-2-functor $\cF \colon \cC \times \cD $ there exists a 2-functor $\bar \cF \colon \cC \boxdot \cD \to \cE$ and 2-natural equivalence $u \colon \bar F \circ \boxdot \Rightarrow F$.
		\item For every two functors $\cG, \cH \colon \cC \boxdot \cD \to \cE$ and 2-natural transformation $\tau \colon \cG \circ \boxdot \Rightarrow \cH \circ \boxdot$, there exists a 2-natural equivalence $\tau' \colon \cG \to \cH$ and invertible modification $\Sigma \colon \tau' \circ \boxdot \Rrightarrow \tau$.
		\item Finally, for every two 2-natural transformations $\tau, \tau' \colon \cG \to \cH$ and modification $\Pi \colon \tau \circ \boxdot \rightarrow \tau' \circ \boxdot$, there exists a \textit{unique} invertible modification $\Pi' \colon \tau \to \tau$ such that $\Pi' \circ \boxdot = \Pi$.
	\end{itemize}
	Furthermore: 
	\begin{itemize}
		\item If $\cC$ and $\cD$ are monoidal, then so is $\cC \boxdot \cD$. With this monoidal structure, the 2-functor $\boxdot$ is monoidal. 
		\item If $\cF \colon \cC \times \cD \to \cE$ is monoidal, then so is the induced 2-functor $\bar F \colon \cC \boxdot \cD \to \cE$
	\end{itemize}
	Finally, for finite semisimple linear monoidal 1-categories $A, B$, then 
	\[
	\Mod(A) \boxdot \Mod(B) \simeq \Mod(A \boxtimes B)
	\]
	and this equivalence is natural in $A$ in $B$.
\end{thm}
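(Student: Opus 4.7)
Since the theorem is recalled from \cite{décoppet20212deligne}, the cleanest route is to cite directly. For an independent approach, the plan is to reduce the 2-Deligne tensor product to the 1-categorical one via Morita theory. By \cite{DECOPPET2022107029} (as invoked in the proof of Lemma \ref{2EquivLegs}), every finite semisimple 2-category $\cC$ admits a presentation $\cC \simeq \Mod(\cA)$ for a multifusion 1-category $\cA$. Choosing such presentations $\cC \simeq \Mod(\cA)$ and $\cD \simeq \Mod(\cB)$, I would \emph{define}
\[
\cC \boxdot \cD \coloneqq \Mod(\cA \boxtimes \cB),
\]
with $\boxdot \colon \cC \times \cD \to \cC \boxdot \cD$ sending a pair of modules to their external Deligne product, regarded as a module over $\cA \boxtimes \cB$. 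The final bullet of the theorem then holds by construction, and independence from the chosen presentation follows from functoriality of the 1-categorical Deligne product on the Morita 3-category of multifusion 1-categories.

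For the universal property, I would restrict a bilinear 2-functor $\cF \colon \Mod(\cA) \times \Mod(\cB) \to \cE$ along the generating inclusion $\mathbf{B}\cA \times \mathbf{B}\cB \hookrightarrow \Mod(\cA) \times \Mod(\cB)$ and then extend back by Cauchy completion, in the same spirit as the proof of Lemma \ref{2EquivLegs}. The restricted data is precisely an $\cA \boxtimes \cB$-module structure on the single object $\cF(\ast, \ast) \in \cE$, equivalently a linear 2-functor $\Mod(\cA \boxtimes \cB) \to \cE$. The three unpacked bullets then correspond to matching this equivalence at the object, 1-morphism, and 2-morphism levels of the 2-category of 2-functors, with the final uniqueness-on-modifications clause flowing from the joint essential surjectivity of $\boxdot$ at every level after idempotent splitting.

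For the monoidal enhancement, if $\cC$ and $\cD$ are monoidal 2-categories then $\cA$ and $\cB$ inherit braided monoidal structures making their tensor products of modules again modules; consequently $\cA \boxtimes \cB$ is braided, and $\cC \boxdot \cD = \Mod(\cA \boxtimes \cB)$ is again monoidal. That $\boxdot$ is monoidal, and that a monoidal bilinear $\cF$ induces a monoidal $\bar \cF$, both follow by transporting the analogous 1-categorical assertions across the Morita triequivalence.

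The main obstacle will be the coherence at the level of modifications, i.e., the third unpacked bullet. In dimension 1 this was trivialized by $\bbK$-linearity of $\boxtimes$, but in dimension 2 one must track the pseudonatural transformations between 2-functors together with their whiskerings and tensorators, and verify that equivalence and uniqueness survive at the top cell. I expect a from-scratch argument here to essentially recapitulate the technical core of \cite{décoppet20212deligne}, which is why the theorem is merely recalled rather than reproved.
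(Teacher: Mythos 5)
The paper does not prove this statement: it is recalled verbatim (specialized to the semisimple case) from \cite{décoppet20212deligne}, so your primary suggestion of citing directly is exactly what the paper does. Your independent sketch also tracks the existence proof in that reference fairly closely — present $\cC \simeq \Mod(\cA)$, $\cD \simeq \Mod(\cB)$, set $\cC \boxdot \cD = \Mod(\cA \boxtimes \cB)$, and verify the universal property by restricting to the generating objects of $\mathbf{B}\cA \times \mathbf{B}\cB$ and re-extending along Cauchy completion — and your assessment that the three-tiered universal property is where the technical weight sits is accurate.

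There is, however, a genuine error in your monoidal enhancement step. You assert that a monoidal structure on $\cC \simeq \Mod(\cA)$ induces a braiding on $\cA$ making the given product on $\cC$ the relative Deligne product of modules. This is false: the equivalence $\cC \simeq \Mod(\cA)$ is only an equivalence of linear $2$-categories and carries no monoidal information. A concrete failure: $\mathbf{2Vec}_G$ is, as a linear $2$-category, $\Mod(\Fun(G,\Vec))$, and $\Fun(G,\Vec)$ is even symmetric; but the monoidal structure this braiding induces on module categories is the pointwise one, not the convolution product of $\mathbf{2Vec}_G$ — the group multiplication is simply invisible in the presenting multifusion category. So for nonabelian (indeed any nontrivial) $G$ your mechanism produces the wrong monoidal $2$-category, and for general monoidal $\cC$ no compatible braiding on $\cA$ exists at all. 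The correct mechanism — the one used in \cite{décoppet20212deligne} and implicitly throughout this paper (compare Construction \ref{Monoidal1SliceConstruction} and Lemma \ref{ModSymmetric2Monoidal}) — is to induce $\otimes_{\cC \boxdot \cD}$ directly from the universal property: the bilinear $2$-functor $(\cC \times \cD) \times (\cC \times \cD) \to \cC \boxdot \cD$ given by swapping the middle factors and applying $\otimes_\cC$ and $\otimes_\cD$ descends along $\boxdot$, using that $\boxdot$ is symmetric and associative up to coherent equivalence; no braiding on presenting algebras is needed or available. The remainder of your sketch (independence of presentation, the final bullet, uniqueness at the modification level) is plausible in outline, though each appeal to functoriality over the Morita $3$-category is itself a substantial input rather than a formality.
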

We provide details of these constructions in Appendix \ref{2CatAppendix}.
\begin{construction}
	The Deligne 2-tensor product, together with the symmetric monoidal structure on $\TwoVec$ give $\TwoSlice$ the structure of a symmetric monoidal 2-category. 
\end{construction}
\begin{lem}
With the above structure (and the standard one on $\TwoAlg$), the functor $\cQ$ is a symmetric monoidal equivalence. 
\end{lem}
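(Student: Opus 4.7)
The plan mirrors Construction \ref{Monoidal1SliceConstruction} in the 1-categorical case. Since $\Mod(-) \colon \TwoAlg \to \TwoSlice$ is a symmetric monoidal 3-functor by the last clause of the Deligne 2-tensor theorem above, and $\cQ$ is already known to be an inverse equivalence by Lemma \ref{2EquivLegs}, it suffices to produce a symmetric monoidal natural equivalence between the monoidal structure on $\TwoSlice$ inherited from $\boxdot$ and the one transported from $\TwoAlg$ along $\Mod(-)$. Doing so automatically upgrades $\cQ$ to a symmetric monoidal equivalence.

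First I would write down the transported structure
\[(\cC,\cF) \tilde\boxdot (\cC',\cF') := \big(\Mod(\End(\cF) \boxtimes \End(\cF')), \mathbf{Forget}\big),\]
with associator, braiding, and unit inherited from the standard symmetric monoidal structure on $\TwoAlg$. Then I would exhibit a 2-natural equivalence $\Phi \colon \boxdot \Rightarrow \tilde\boxdot$ in $\TwoSlice$ whose underlying 2-functor sends $c \boxdot c' \mapsto \cF(c) \boxtimes \cF'(c')$ equipped with the evident action of $\End(\cF) \boxtimes \End(\cF')$, and whose 2-cell component on forgetful functors is (up to the Deligne universal property) the identity. That $\Phi$ is an equivalence in $\TwoSlice$ follows from the categorified analogue of Lemma \ref{EquivalenceInSlice}: a 1-morphism in $\TwoSlice$ is an equivalence if and only if the underlying 2-functor is, and here the underlying 2-functor is an equivalence of 2-categories because $\Mod(-)$ is a triequivalence on the Morita 3-category of algebras.

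To promote $\Phi$ to a monoidal natural equivalence, I would transport the associator of $\boxdot$ — built from $\alpha_{\TwoVec}$ together with the associator of $\otimes$ on $\TwoVec$, exactly as in \eqref{1SliceAssociator} — across $\Phi$ and identify the result with the associator inherited from $\TwoAlg$, using that $\Mod(A) \boxdot \Mod(B) \simeq \Mod(A \boxtimes B)$ naturally in $A$ and $B$. The analogous checks apply for the braiding and unitors. By 2-truncation of $\TwoSlice$, the higher coherence modifications (pentagonator, hexagonators, syllepsis) are uniquely determined from their underlying 2-cell data, so once the associator and braiding have been matched at the level of 1-cells and 2-cells, all remaining coherences agree automatically.

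The main obstacle will be bookkeeping the explicit tensorator $\cQ(\cT \boxdot \cT') \simeq \cQ(\cT) \boxtimes \cQ(\cT')$ and the symmetrator as invertible 2-cells in $\TwoAlg$ that are compatible with the defining data of $\Phi$, especially sorting out the direction reversals coming from the contravariance of $\cQ$ on 1-morphisms and 2-morphisms. The 2-truncation makes all further coherence comparisons formal, so the technical heart of the argument is constructing $\Phi$ and carrying out the associator transport; this is where the last clause of the Deligne theorem (compatibility with $\Mod$) does the essential work.
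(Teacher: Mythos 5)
Your proposal is correct and is essentially the paper's own argument: the paper proves in Appendix Lemma \ref{ModSymmetric2Monoidal} that $\Mod(-)$ is symmetric monoidal for the $\boxdot$-structure --- its tensorator $\Mod(C)\boxdot\Mod(D)\to\Mod(C\boxtimes D)$, induced by the universal property of $\boxdot$ and Cauchy completeness and made compatible with the forgetful functors, is exactly your comparison equivalence --- and then concludes that the pseudoinverse $\cQ$ of a symmetric monoidal equivalence is symmetric monoidal. Your repackaging as transport-and-compare, mirroring Construction \ref{Monoidal1SliceConstruction}, carries the same data, and both arguments dispatch the higher coherences by $2$-truncation together with the universal property of the Deligne $2$-tensor product.
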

From this we have the following:
\begin{cor} \label{2CategoriesOfMonoidsEquivalent}
	 With the standard monoidal structure on modules for a coalgebra object, the functors $\cQ$ and $\Mod$ form a symmetric monoidal equivalence between the $2$-categories of algebras in $\TwoSlice$ and coalgebras in $\TwoAlg$. By Theorem 3.9 of \cite{neuchl}, the 2-category of coalgebra objects in $\TwoAlg$ is the 2-category of bialgebra objects in $\TwoVec$; i.e  \cite{1509.06811} finitely semisimple categories which are compatibly both monoidal and comonoidal. 
\end{cor}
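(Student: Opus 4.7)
The plan is to derive this entirely from the symmetric monoidal equivalence $\cQ \colon \TwoSlice \to \TwoAlg$ established in the previous lemma, together with the general principle that symmetric monoidal equivalences of symmetric monoidal $n$-categories induce equivalences between their 2-categories of algebra objects, with contravariance on 1-morphisms exchanging algebras with coalgebras.

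First, I would recall/spell out this general principle: if $\cF \colon \cM \to \cN$ is a symmetric monoidal 2-functor between symmetric monoidal 2-categories and $(m,\mu,\iota)$ is an algebra object in $\cM$ with $\mu \colon m\otimes m \to m$ and $\iota \colon 1 \to m$, then applying $\cF$ together with its tensorator produces morphisms $\cF(m)\otimes \cF(m) \to \cF(m\otimes m) \to \cF(m)$ and $1 \to \cF(1) \to \cF(m)$ satisfying the algebra axioms; if $\cF$ is instead contravariant at the level of 1-morphisms, the induced 1-morphisms $\cF(m) \to \cF(m)\otimes\cF(m)$ and $\cF(m) \to 1$ satisfy coassociativity and counit axioms, and the 2-morphism data extends to give a 2-functor between the respective 2-categories of (co)algebras; when $\cF$ is an equivalence so is the induced functor.

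Second, I would apply this principle to $\cQ$ to obtain an equivalence $\sf{Alg}(\TwoSlice) \simeq \sf{Coalg}(\TwoAlg)$, with inverse induced by $\Mod$. Then I would quote Theorem 3.9 of \cite{neuchl} directly to identify $\sf{Coalg}(\TwoAlg)$ with the 2-category of 2-bialgebras in $\TwoVec$.

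Finally, I would verify the description of the monoidal structure on $\Mod(H)$ for a 2-bialgebra $H$. On the algebraic side, the coalgebra structure gives a monoidal 1-functor $\Delta \colon H \to H \boxtimes H$ in $\TwoAlg$; applying $\Mod$ and using $\Mod(H\boxtimes H) \simeq \Mod(H) \boxdot \Mod(H)$ provides the standard monoidal structure on $\Mod(H)$, with the forgetful functor being monoidal via the counit $H \to \Vec$. On the slice side, this is exactly the structure of $(\Mod(H), \textbf{Forget}_H)$ as a monoid object in $\TwoSlice$ produced by the lifted equivalence. The agreement of these two structures is essentially tautological given that $\cQ$ and $\Mod$ were already shown to be \emph{symmetric monoidal} equivalences, so tensor products and units are preserved coherently. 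The only subtle point is that the tensorator for the composite $\Mod \circ \cQ$ must be traced through the construction of the Deligne 2-tensor product, but this has already been handled in the construction of the monoidal structure on $\TwoSlice$; the main obstacle here is notational bookkeeping rather than any genuine coherence check.
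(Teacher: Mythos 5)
Your proposal is correct and takes essentially the same route as the paper, which states this as an immediate corollary of the preceding lemma that $\cQ$ is a symmetric monoidal equivalence (contravariant on 1-morphisms) and offers no further argument; you are simply making explicit the transport-of-structure principle and the appeal to Neuchl's Theorem 3.9 that the paper leaves implicit.
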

We have the following description of the $2$-category of algebras in $\TwoSlice$:
\begin{prop} \label{2MonoidIn2Slice}
	Every monoidal 2-functor $(\cC, \cF)$ in $\TwoSlice$ is canonically an algebra. With respect to this structure every 1-morphism $(\cT, \tau)$ in $\TwoSlice$ is an algebra homomorphism if $T$ and $\tau$ are monoidal, and likewise for $2$-morphisms. Moreover, every algebra is equivalent to one of this form.
\end{prop}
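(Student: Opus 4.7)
The plan is to categorify Proposition \ref{1MonoidIn1Slice} almost verbatim, leveraging the $2$-truncation of $\TwoSlice$ to supply all $3$-coherence data automatically, so that only finitely many pasting diagrams require explicit inspection.

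First, I would dispatch the final clause---that every algebra in $\TwoSlice$ is equivalent to one of the canonical form---using Corollary \ref{2CategoriesOfMonoidsEquivalent}. Given a $2$-bialgebra $B$ in $\TwoVec$, the comultiplication upgrades the forgetful $2$-functor $\Mod(B) \to \TwoVec$ to a monoidal $2$-functor, producing exactly an object $(\Mod(B), \textbf{Forget}_B)$ of the claimed form, and the corollary identifies every algebra in $\TwoSlice$ with a bialgebra in this way.

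Next, for a monoidal $2$-functor $(\cC, \cF)$ with tensorator $\chi$ and unit equivalence $\iota_\cF$, I would define the multiplication and unit $1$-morphisms of the algebra structure on $(\cC, \cF)$ by
\[
\mu^{(\cC,\cF)} := (\otimes_\cC,\, \chi), \qquad \iota^{(\cC,\cF)} := (u_\cC,\, \iota_\cF),
\]
where $u_\cC \colon \TwoVec \to \cC$ denotes the unit $2$-functor and $\chi$ is viewed, via the $3$-universal property of $\boxdot$, as a $2$-natural equivalence $\otimes_\TwoVec \circ (\cF \boxdot \cF) \Rightarrow \cF \circ \otimes_\cC$. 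The associator $2$-morphism of the algebra structure combines the associator $\alpha_\cC$ of $\cC$, which supplies the underlying natural equivalence between the two iterated multiplications, with the pentagonator $\omega$ of $\cF$, which fills the ice-cream cone against the ambient associator on $\TwoSlice$ (the latter being constructed as in Construction \ref{Monoidal1SliceConstruction}). The unitor $2$-morphisms are built identically from the unitors of $\cC$ and the unit coherences of $\cF$.

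For the morphism and $2$-morphism parts, I would use the tensorator of $\cT$ together with the monoidality of $\tau$ to assemble the algebra-homomorphism data for $(\cT, \tau)$, exactly mirroring the final pasting diagram in the proof of Proposition \ref{1MonoidIn1Slice}; the $2$-morphism version is identical with monoidal modifications providing the filling $3$-cells. The main obstacle I anticipate is the bookkeeping of pasting diagrams through $\boxdot$: the ambient associator on $\TwoSlice$ lives only up to the $3$-universal property and its explicit form is cumbersome. However, $2$-truncation guarantees that any candidate filling $3$-cell is uniquely determined, so only the \emph{existence} of an appropriate pasting equation need be verified at each step, and this existence is precisely the content of the hexagon/pentagon axioms for $\cF$ (and the monoidality axioms for $\cT$ and $\tau$).
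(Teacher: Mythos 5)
Your proposal is correct and follows essentially the same route as the paper: the final clause via Corollary \ref{2CategoriesOfMonoidsEquivalent}, multiplication and unit given by $(\otimes_\cC,\chi)$ and the unit data of $\cF$, the algebra pentagonator assembled from $\alpha_\cC$ and the hexagonator $\omega$ of $\cF$, homomorphism data from the tensorator of $\cT$ and the monoidality of $\tau$, with $2$-truncation reducing each axiom to the existence of a pasting equation supplied by the monoidal $2$-functor/transformation axioms. The paper merely organizes this dictionary as explicit tables (matching D\'ecoppet's algebra axioms to Schommer-Pries's monoidal functor axioms, with the pentagonator and middle $2$-unitor of $\cC$ as the witnessing $3$-cells) and defers the two hardest pasting-diagram checks to the appendix.
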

\begin{proof}
	The second part of the lemma follows from Corollary \ref{2CategoriesOfMonoidsEquivalent}. We will compare the definition of algebra, algebra 1-morphism and algebra 2-morphism in a weak-2 category given in section 3.1 of \cite{decoppet2023finite} with the definitions of monoidal 2-functors, 2-transformations, and modifications from pages 90-98 of \cite{schommer-pries-thesis}. We present this comparison as a series of tables, two each for 0-,1-, and 2-morphisms. One table provides the correspondence of data, and the other the correspondence of axioms. The far right column of this latter table provides the nontrivial data in $\cC$ that allows a 3-morphism between the two modifications making up a given axiom. The most complex axioms are the first axiom for a functor/algebra object and the first axiom for a natural transformation/algebra 1-morphism. We provide explicit pasting diagram verifications of these axioms in Appendix \ref{2CatAppendix}; the others are simpler and left to the reader. 
	
 In many cases, the universal property of the $2$-Deligne tensor product is implicitly used to replace instances of $\times$ with $\boxdot$. The notation in this section is locally inherited from the two authors whose works we are comparing; in particular there may be notation conflicts between the second column and the first.
	
	We mention that the cells which are required to be invertible are as a consequence of Lemma \ref{EquivalenceInTwoSlice}.
	\begin{center}
\textbf{0-cells:}
	\end{center}
\textit{Data:}
\begin{center}
\begin{tabular}{|c|c|}
	\hline 
	Monoidal 2-functor & Corresponding algebra in $\TwoSlice$ \\
	\hline
	Underlying functor $\cF \colon \cC \to \TwoVec$ & Object $(\cC, \cF)$ \\
	\hline 
	Tensorator $\chi \colon \boxtimes \circ (\cF \times \cF) \Rightarrow \cF \circ \otimes_\cC$ & Multiplication morphism $m \coloneqq (\otimes_\cC, \chi)$ \\
	\hline
	Unitor $\iota \colon I_{\TwoVec} \Rightarrow \cF \circ I_\cC$ & Unit morphism $i \coloneqq (I_\cC, \iota)$ \\ 
	\hline
	 Hexagonator modification $\omega$ & Algebra pentagonator  $\mu^{(\cC, \cF)} \coloneqq (\alpha_\cC, \omega)$ \\
	 \hline
	 Left 2-unitor modification $\gamma$ & Left algebra 2-unitor  $\lambda^{(\cC, \cF)} \coloneqq (\ell_\cC, \gamma)$\\
	 \hline
	 Right 2-unitor modification $\delta$ & Right algebra 2-unitor $\rho^{(\cC, \cF)} \coloneqq (r_\cC, \delta)$ \\
	 \hline
\end{tabular}
\end{center}
That the 2-cells $\mu^{(\cC, \cF)}, \rho^{(\cC, \cF)}, \lambda^{(\cC, \cF)}$ are well typed is a consequence of the fact that the constraint cells for $\TwoSlice$ are induced from the monoidal structure on $\TwoVec$ and the Cartesian product on $\sf{2Cat}$, and was the motivation for the definition of the direction of the morphisms in $\TwoSlice$. \\
\textit{Axioms:}
\begin{center}
	\begin{tabular}{|c|c|c|}
		\hline
	Monoidal 2-functor axiom & Corresponding algebra axiom & Witnessing 3-morphism \\
	\hline	
	1\textsuperscript{st} &(a)& $\Pi_\cC$, the pentagonator of $\cC$. \\
	\hline
	2\textsuperscript{nd} & (b)& $\mu_\cC$, the middle 2-unitor of $\cC$.\\
	\hline
	\end{tabular}
\end{center}
 In the 3-functor axioms, the region corresponding to the 3-morphism in the first row is the region marked $H\pi$ on the bottom diagram of \cite[17]{MR1261589} (alternatively \cite[68]{MR3076451}). See Lemma \ref{2AlgebraAxiom1Verification} for a more thorough verification. The role of the pentagonator here is that of the associator in Proposition \ref{1MonoidIn1Slice}. To see the second row, invert all but the last 2-morphism in the left expression of D\'ecoppet's axiom (b). The claim then collapses to precisely the statement that the constraint cell $\mu_\TwoSlice$ is induced from $\TwoVec$. 
\begin{center}
\textbf{1-cells}
\end{center}
\textit{Data:}
\begin{center}
\begin{tabular}{|c|c|}
\hline 
Monoidal 2-functor $\cT$ and monoidal natural transformation $\tau$ & Corresponding algebra 1-morphism \\ 
\hline
Underlying functor $\cT$ and natural transformation $\tau$  &   1-morphism $(\cT, \tau)$\\
\hline
Tensorator $\chi_\cT$ of $\cT$ and pentagonator $\Pi$ of $\tau$ & 2-cell $\kappa_{(\cT, \tau)} = (\chi_T, \Pi^{-1}) $\\ 
\hline
Unitor $\iota_T$ of $\cT$ and unitor $M$ of $\tau$ & 2-morphism $(\iota_\cT, M)$\\
\hline
\end{tabular}
\end{center}
The diagram corresponding to $\kappa_{(\cT, \tau)}$ has six natural transformations; the composite of $\chi_{\cF'}$ and $\chi_\cT$ is $\chi_{\cF'\cT}$, so that $\Pi$ is well typed. A similar statement holds for the unit. \\
\textit{Axioms: }
\begin{center}
\begin{tabular}{|c|c|} \hline
	Monoidal transformation axiom & Corresponding algebra 1-morphism axiom \\
	\hline
	$MBTA1$ &  (a) \\
	\hline
	$MBTA2$ &  (b) \\
	\hline
	$MBTA3$ &  (c) 	\\
	\hline 
\end{tabular} 
\end{center}
See Lemma \ref{2Algebra1MorphismAxiom1Verification} for a verification of the first row in the above table; the others are similar.
\begin{center}
\textbf{2-cells}
\end{center}
\textit{Data:}
\begin{center}
\begin{tabular}{|c|c|}
	\hline
Monoidal transformation $\sigma$ and modification $\Sigma$ & Algebra 2-morphism \\
\hline
Underlying morphisms $\sigma$, $\Sigma$ &  2-cell $(\sigma, \Sigma)$\\
\hline
\end{tabular}
\end{center}
\textit{Axioms:}
\begin{center}
\begin{tabular}{|c|c|}
	\hline
	Monoidal modfication axiom & Corresponding Algebra 2-morphism axiom \\
	\hline
	BMBM1 & (a) \\
	\hline
	BMBM2 & (b) \\
	\hline
\end{tabular}
\end{center}
Both axioms involve only two or three 2-cells (after a nudging convention is applied to BMBM1) and are straightforward to check.
\end{proof}
We have just proved:
\begin{cor}[2-Bialgebra Reconstruction] \label{2BialgebraReconstruction}
The functors $\cQ$ and $\Mod(-)$ induce a symmetric monoidal equivalence between $\TwoSlice^{1\op}$ and the 2-category of bialgebra objects in $\TwoVec$. 
\end{cor}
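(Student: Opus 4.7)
The plan is to assemble previously established results; this corollary is essentially a bookkeeping statement that packages them into a single assertion. From Proposition \ref{2MonoidIn2Slice} we know that the 2-category of algebra objects in $\TwoSlice$, taken with respect to the Deligne 2-tensor product monoidal structure, is equivalent to the sub-2-category of $\TwoSlice^{1\op}$ whose objects are pairs $(\cC, \cF)$ with $\cF$ carrying a chosen monoidal structure, whose 1-morphisms are monoidal 1-morphisms, and whose 2-morphisms are monoidal 2-morphisms. Corollary \ref{2CategoriesOfMonoidsEquivalent} provides a symmetric monoidal equivalence between algebras in $\TwoSlice$ and coalgebras in $\TwoAlg$, and Neuchl's theorem identifies the latter with the 2-category of bialgebra objects in $\TwoVec$. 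Composing these two equivalences yields the claimed statement; the contravariance at the level of 1-morphisms is already baked into the signature $\cQ \colon \TwoSlice^{1\op} \to \TwoAlg$ from Lemma \ref{2FunctorAxiomsForQ}.

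For the symmetric monoidal enhancement of the equivalence, I would appeal to the fact that $\cQ$ has already been upgraded to a symmetric monoidal 2-functor as a consequence of Lemma \ref{2EquivLegs} combined with the symmetric monoidal structure on $\Mod(-)$. Taking algebra objects is itself functorial for symmetric monoidal equivalences of symmetric monoidal 2-categories, so $\cQ$ and $\Mod(-)$ descend to a symmetric monoidal equivalence between the corresponding 2-categories of algebra objects, which is the assertion to be proved once the identifications of the previous paragraph are invoked.

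The main obstacle has already been absorbed into Proposition \ref{2MonoidIn2Slice}, whose proof required the extensive tabular comparison between the data and axioms for an algebra object in a weak monoidal 2-category and those for a monoidal 2-functor, together with the pasting-diagram verifications pushed to Appendix \ref{2CatAppendix}. Once that correspondence is in hand, and once one has unwound that the coherence cells in $\TwoSlice$ are themselves induced from those of $\TwoVec$ (so that algebra pentagonators and unitors reduce to hexagonators and 2-unitors of a monoidal 2-functor, without any additional verification), there is no remaining coherence to check for the present corollary: it follows immediately by composition of symmetric monoidal equivalences.
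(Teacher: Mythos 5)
Your proposal is correct and follows exactly the paper's route: the paper's entire proof is the phrase ``We have just proved,'' deferring to Proposition \ref{2MonoidIn2Slice} and Corollary \ref{2CategoriesOfMonoidsEquivalent} (with Neuchl's identification of coalgebras in $\TwoAlg$ with bialgebras in $\TwoVec$), which is precisely the assembly you carry out. Your additional remarks on the symmetric monoidal enhancement via Lemma \ref{2EquivLegs} and the functoriality of taking algebra objects just make explicit what the paper leaves implicit.
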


\begin{remark}
Given suitable ``external'' definitions of braided/symmetric/sylleptic monoidal 2-categories, it is straightforward to add structure to both sides of this equivalence. We will revisit this topic in future work.
\end{remark}

\subsection{The Sweedler isomorphism}
 In order to make computations, we give an analogue of Sweedler notation which is appropriately natural. As expected, there is a contractible space of choices; this categorifies the many equal ways to decompose a tensor as a sum of simple tensors. Fortunately, it is sufficient for our purposes to construct only one. We have the commutative diagram:
\[\begin{tikzcd}
	{\End(\cF)} && {\End(\boxtimes \circ \cF \boxdot \cF)} \\
	{\End(\cF) \boxtimes \End(\cF)} && {\End(\cF \boxdot \cF)}
	\arrow["\Delta"', from=1-1, to=2-1]
	\arrow["{\cQ(\otimes, J)}", from=1-1, to=1-3]
	\arrow["{\tau \boxtimes \tau' \mapsto \tau \boxdot\tau'}"', from=2-1, to=2-3]
	\arrow["{\boxtimes \circ -}"', from=2-3, to=1-3]
\end{tikzcd}.\]
The composite of the bottom and right arrows is the tensorator of $\cQ$. Let $\set{c_i}{i \in \cI}$, be a set of representatives of the isomorphism classes of simple objects of $\End(\cF \boxtimes \cF)$. Choose any inverse $\tilde K$ of the bottom arrow and extend the object function $K(c_i)$ to a functor, isomorphic to but different from $K$, by direct sum. This provides a natural (in $\eta$) decomposition 
\[
\Delta(\eta) \simeq \bigoplus \eta_{(1)} \boxtimes \eta_{(2)}
\] 
Transporting the right object across the tensorator, we obtain a natural isomorphism which has components:
\begin{equation}\label{2SweedlerNotation}
\begin{tikzpicture}
\draw[thick] (0,-1) -- (0,1);
\draw[thick] (.75,-1) -- (.75,1);
\roundNbox{fill=white}{(0,-.333)}{.3}{.05}{.05}{${\eta_{(1)}}$};
\roundNbox{fill=white}{(.75,.333)}{.3}{.05}{.05}{${\eta_{(2)}}$};
	\draw[thick] (4,-1) -- (4,1);
	\draw[thick] (4.75,-1) -- (4.75,1);
	\roundNbox{fill=white}{(4.375,0)}{.3}{.4}{.4}{${\cQ(\otimes, J)}$};
	\node at (2.2, 0) {$\Longrightarrow$};
\end{tikzpicture}
\end{equation}
\subsection{Duals and Antipodes}
In this section we extend the results to include duality. We first recall an abbreviated definition of a 2-\textit{Hopf} algebra from \cite{neuchl}.
\begin{defn} \label{Hopf2Axiom}
	A \textit{2-bialgebra} in $\TwoVec$ is a compatibly monoidal and comonoidal finite semisimple linear category. A \textit{2-Hopf algebra} is a 2-bialgebra $C$ together with a functor $S \colon C \to C$ and two natural isomorphisms $\sigma_1 \colon \otimes \circ (S \boxtimes 1) \circ \Delta \Rightarrow \iota \circ \epsilon$ and $\sigma_2 \colon \iota \circ \epsilon \Rightarrow \otimes \circ (1 \boxtimes S) \circ \Delta$. This can be expressed in the following diagram with familiar outer shape: 
	\[\begin{tikzcd}
		& {C \boxtimes C} & {C \boxtimes C} \\
		C &&& C \\
		& {C \boxtimes C} & {C \boxtimes C}
		\arrow["\Delta", from=2-1, to=1-2]
		\arrow[""{name=0, anchor=center, inner sep=0}, "{S \boxtimes 1}", from=1-2, to=1-3]
		\arrow["\otimes", from=1-3, to=2-4]
		\arrow["\Delta"', from=2-1, to=3-2]
		\arrow[""{name=1, anchor=center, inner sep=0}, "{\iota \epsilon}"', from=2-1, to=2-4]
		\arrow[""{name=2, anchor=center, inner sep=0}, "{1 \boxtimes S}"', from=3-2, to=3-3]
		\arrow["\otimes"', from=3-3, to=2-4]
		\arrow["{\sigma_1}", shorten <=9pt, shorten >=9pt, Rightarrow, from=0, to=1]
		\arrow["{\sigma_2}", shorten <=9pt, shorten >=9pt, Rightarrow, from=1, to=2]
	\end{tikzcd}\]
\end{defn}
Neuchl includes another axiom stating that $\sigma_1$ and $\sigma_2$ satsify a version of the triangle identities for an adjunction, and then observes that it is not really a restriction, i.e, if there exist isomorphisms $\sigma_1$ and $\sigma_2$ as defined, then we can change at most one of them to obtain a pair satisfying the axiom. These axioms, rewritten for the not-necessarily-strict case, state that the following two pasting diagrams have identity components. 

\[\begin{tikzcd}[row sep = small]
	&& {C \boxtimes C} && {C \boxtimes C} \\
	\\
	&& {(C \boxtimes C)\boxtimes C} && {(C \boxtimes C) \boxtimes C} \\
	C &&&&&& C \\
	&& {C \boxtimes (C\boxtimes C)} && {C \boxtimes (C\boxtimes C)} \\
	\\
	&& {C \boxtimes C} && {C\boxtimes C}
	\arrow["\Delta", from=4-1, to=1-3]
	\arrow[""{name=0, anchor=center, inner sep=0}, "{\iota\epsilon \boxtimes 1}", from=1-3, to=1-5]
	\arrow["\otimes", from=1-5, to=4-7]
	\arrow["{\Delta\boxtimes 1}", from=1-3, to=3-3]
	\arrow[""{name=1, anchor=center, inner sep=0}, "{(1 \boxtimes S) \boxtimes1}", swap, from=3-3, to=3-5]
	\arrow["\otimes\boxtimes1", from=3-5, to=1-5]
	\arrow[""{name=2, anchor=center, inner sep=0}, "\alpha", from=3-3, to=5-3]
	\arrow[""{name=3, anchor=center, inner sep=0}, "\alpha", swap, from=3-5, to=5-5]
	\arrow[""{name=4, anchor=center, inner sep=0}, "{1 \boxtimes (S \boxtimes 1)}"', swap, from=5-3, to=5-5]
	\arrow["1\boxtimes\Delta"', from=7-3, to=5-3]
	\arrow["\Delta", swap, from=4-1, to=7-3]
	\arrow[""{name=5, anchor=center, inner sep=0}, "{1 \boxtimes \iota\epsilon}"', from=7-3, to=7-5]
	\arrow["{1 \boxtimes \otimes}", swap, from=5-5, to=7-5]
	\arrow["\otimes"', from=7-5, to=4-7]
	\arrow["{\sigma_2 \boxtimes 1}", shorten <=10pt, shorten >=10pt, Rightarrow, from=0, to=1]
	\arrow["{1 \boxtimes \sigma_1}"', shorten <=10pt, shorten >=10pt, Rightarrow, from=4, to=5]
	\arrow["\alpha", shorten <=17pt, shorten >=17pt, Rightarrow, from=1, to=4]
	\arrow["{}", shorten <=28pt, shorten >=28pt, Rightarrow, from=4-1, to=2]
	\arrow["{}", shorten <=28pt, shorten >=28pt, Rightarrow, from=3, to=4-7]
\end{tikzcd}\]

\[\begin{tikzcd}[row sep = small]
	&& {C \boxtimes C} && {C \boxtimes C} \\
	\\
	&& {C \boxtimes (C\boxtimes C)} && {C \boxtimes (C \boxtimes C)} \\
	C &&&&&& C \\
	&& {(C \boxtimes C)\boxtimes C} && {(C \boxtimes C) \boxtimes C} \\
	\\
	&& {C \boxtimes C} && {C\boxtimes C}
	\arrow["\Delta", from=4-1, to=1-3]
	\arrow[""{name=0, anchor=center, inner sep=0}, "{S \boxtimes \iota\epsilon }", from=1-3, to=1-5]
	\arrow["\otimes", from=1-5, to=4-7]
	\arrow["{1 \boxtimes \Delta}", from=1-3, to=3-3]
	\arrow[""{name=1, anchor=center, inner sep=0}, "{S \boxtimes (1 \boxtimes S)}", swap, from=3-3, to=3-5]
	\arrow["1\boxtimes\otimes", from=3-5, to=1-5]
	\arrow[""{name=2, anchor=center, inner sep=0}, "\alpha", swap, to=3-3, from=5-3]
	\arrow[""{name=3, anchor=center, inner sep=0}, "\alpha", to=3-5, from=5-5]
	\arrow[""{name=4, anchor=center, inner sep=0}, "{(S \boxtimes 1) \boxtimes S}"', swap, from=5-3, to=5-5]
	\arrow["\Delta\boxtimes1"', from=7-3, to=5-3]
	\arrow["\Delta", swap, from=4-1, to=7-3]
	\arrow[""{name=5, anchor=center, inner sep=0}, "{1 \boxtimes \iota\epsilon}"', from=7-3, to=7-5]
	\arrow["{\otimes  \boxtimes 1}", swap, from=5-5, to=7-5]
	\arrow["\otimes"', from=7-5, to=4-7]
	\arrow["{1 \boxtimes \sigma_2}", shorten <=10pt, shorten >=10pt, Rightarrow, from=0, to=1]
	\arrow["{\sigma_1 \boxtimes 1}"', shorten <=10pt, shorten >=10pt, Rightarrow, from=4, to=5]
	\arrow["\alpha", shorten <=17pt, shorten >=17pt, Rightarrow, from=1, to=4]
	\arrow["{}", shorten <=28pt, shorten >=28pt, Rightarrow, from=4-1, to=2]
	\arrow["{}", shorten <=28pt, shorten >=28pt, Rightarrow, from=3, to=4-7]
\end{tikzcd}\]
Here, $\alpha$ is the associator on $\TwoVec$, and the pentagons are the pentagonators for the monoidal and comonoidal structures on $C$.  In order to leverage our constructions from the previous section, it will become necessary to assume the duals on our fusion 2-categories are functorial, but not monoidal. See \cite{DECOPPET2022107029} for a construction of a functorial dual starting with any dual.
\begin{thm} \label{2HopfReconstruction}
	The functors $\cQ$ and $\Mod$ are inverse equivalences between the 2-category of 2-Hopf algebras and the sub 2-category of $\TwoSlice$ consisting of fusion 2-categories.
\end{thm}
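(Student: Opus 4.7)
The plan is to use Corollary \ref{2BialgebraReconstruction} to reduce to the following: upgrading the $2$-bialgebra equivalence to a 2-Hopf algebra equivalence requires only that (i) if $\cC$ is a fusion 2-category, the 2-bialgebra $\End(\cF)$ admits antipode data $(S,\sigma_1,\sigma_2)$ satisfying Neuchl's axioms, and (ii) if $H$ is a 2-Hopf algebra, then $\Mod(H)$ is a fusion 2-category. The uniqueness-up-to-contractible-choice of antipode data (analogous to the uniqueness of duals) means no further compatibility with morphisms or 2-morphisms must be checked. This exactly mimics the structure of the proof of Theorem \ref{1BialgReconstruction}, so the focus is on building $(S,\sigma_1,\sigma_2)$ and then running a 2-categorical snake argument.

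For the construction of $S$, I would follow the 1-categorical recipe. Fixing a choice of functorial (not-necessarily-monoidal) dual $(-)^\vee\colon \cC^{1\op}\to \cC$ on the fusion 2-category $\cC$, the canonical equivalences ${}^\vee(\cF(c))\simeq \cF(c^\vee)$ (coming from the fact that $\cF$ preserves duals up to canonical equivalence) assemble into a 2-morphism $\delta$ in $\TwoSlice$ from $(\cC^{1\op},{}^\vee\cF)$ to $(\cC,\cF\circ(-)^\vee)$. Applying $\cQ$ produces an antialgebra functor $\End(\cF)\to \End({}^\vee\cF)\cong \End(\cF)$, where the final identification uses the double-dual coherence. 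This is the desired $S$. The natural isomorphisms $\sigma_1$ and $\sigma_2$ are obtained by lifting the evaluation and coevaluation in $\cC$ through $\cF$; in Sweedler notation \eqref{2SweedlerNotation}, the defining equations $\otimes\circ (S\boxdot 1)\circ\Delta\simeq \iota\epsilon\simeq \otimes\circ(1\boxdot S)\circ \Delta$ are realized pointwise by the zigzag equivalences, analogous to \eqref{1AntipodeAxiomSnake}.

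To verify these satisfy the Neuchl axioms, I would perform the categorified snake manipulation. The key ingredient is the analogue of \eqref{1CatDualReplacement}, which is a natural equivalence built from the tensorator $\chi$ of $\cF$ together with $\delta$, identifying $\cF(\ev_c)$ with a composite involving $\chi_{c,c^\vee}$ and $\ev_{\cF(c)}$. Naturality of elements of $\End(\cF)$ (applied to the evaluations, as in \eqref{1AntipodeAxiomVerification}) then yields the required isomorphism. The crucial coherence between $\sigma_1$ and $\sigma_2$ (the two displayed pasting diagrams forced to have identity components) reduces to the swallowtail-like coherence of the chosen dual data in $\cC$, transported through $\cF$ via its tensorator and the pentagonator.

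The hard part will be the reverse direction: showing $\Mod(H)$ is a fusion 2-category whenever $H$ is a 2-Hopf algebra. The antipode $S$ together with $\sigma_1,\sigma_2$ should let one equip each module 1-category $(\cM,\rho)$ with a dual by twisting $\rho$ along $S$ and using $\sigma_1,\sigma_2$ to produce evaluation and coevaluation, but checking that the result lies in $\Mod(H)$ and satisfies the 2-categorical triangle equivalences is delicate, since it requires the full coherence between $\sigma_1$, $\sigma_2$, and the comonoidal constraints. Fortunately, because $\Mod\circ \cQ \simeq \id$ on $\TwoSlice$, one may simply transport the duals present in the fusion 2-category $\cC$ along this equivalence to obtain duals on $\Mod(\End(\cF))$, and check only that the data so transported agrees (up to invertible 3-cells, which exist uniquely) with the duals built intrinsically from $(S,\sigma_1,\sigma_2)$; this sidesteps the need to verify the swallowtail identity by hand.
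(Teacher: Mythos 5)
Your forward direction (constructing $\cS$ on $\End(\cF)$ from a functorial dual on $\cC$ via the comparison 1-morphism $\delta$, realizing $\sigma_1,\sigma_2$ pointwise by zig-zags in Sweedler notation, and reducing the Neuchl coherence to the swallowtail/coherence-for-duals statement) is essentially the paper's argument, as is the appeal to contractibility of the space of antipodes to avoid checking compatibility with 1- and 2-morphisms.

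The gap is in your reverse direction. You propose to show $\Mod(H)$ is fusion by transporting duals ``present in the fusion 2-category $\cC$'' along $\Mod\circ\cQ\simeq\id$, but when the input is an abstract 2-Hopf algebra $H$ there is no fusion 2-category $\cC$ in sight: the only candidate is $\Mod(H)$ itself (via $H\simeq\End(\textbf{Forget}_{\Mod(H)})$ from the bialgebra equivalence), and whether \emph{that} has duals is exactly what you are trying to prove. The transport argument is therefore circular in this direction. The paper instead constructs the duals on $\Mod(H)$ directly: for a module $\rho\colon C\to\End(\cM)$ it sets $\rho^*\coloneqq(\rho\circ\cS)^*$ and ${}^*\rho\coloneqq{}^*(\rho\circ\cS^{-1})$, observes that these are again monoidal functors (hence objects of $\Mod(H)$) because the antipode is an algebra antihomomorphism and so is the dual functor on $\TwoVec$, and then reads off the evaluation and coevaluation directly from the structure isomorphisms $\sigma_1,\sigma_2$ of the antipode. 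The ``delicate'' membership check you flag is thus a one-line consequence of antimultiplicativity, and no transport is needed. If you replace your last paragraph with this direct twisting construction, the proof aligns with the paper's.
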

\begin{proof}
	From $(\cC, \cF)$ in $\TwoSlice$ with a left dual 2-functor, we have the morphism
\[\begin{tikzcd}
	{\cC^{0,1\op{}}} && \cC \\
	\\
	& \TwoVec
	\arrow["{c \mapsto ^*c}", from=1-1, to=1-3]
	\arrow["\cF", from=1-3, to=3-2]
	\arrow[""{name=0, anchor=center, inner sep=0}, "{^*\cF}"', from=1-1, to=3-2]
	\arrow["\delta"', shorten <=22pt, shorten >=22pt, Rightarrow, from=0, to=1-3]
\end{tikzcd}\]
	which induces a map $\End(\cF) \mapsto \End(^*(-) \circ \cF)$. Whiskering with the right dual functor and using the isomorphism $((^*(-))^* \circ \cF) \Rightarrow \cF$ \cite[Lemma 1.1.5]{weakfusion}, we have morphism $\cS \colon \End(\cF) \to \End(\cF)$, with 1-cell components agreeing with that for 1-categories:
	\begin{equation*}
		\cS(\eta)_{c} = (\delta_c^{-1}\eta_{{}^{*}c}\delta_c)^*
	\end{equation*}
	Denoting interchangers by $\phi$ and instances of $\delta$ by $\bullet$, the naturator of $\cS(\eta)$ is: 
	\begin{align}\label{2SnakeNaturator}
		\tikzmath{
			\draw[thick] (0,-1) -- (0, 2.5)  arc (180:0:.5cm) -- (1,.5)  arc (-180:0:.5cm) -- (2, 4);
			\roundNbox{fill=white}{(1,1.5)}{.3}{.05}{.05}{$\eta$};
			\roundNbox{fill=white}{(0,-.5)}{.3}{.05}{.05}{${\cF f}$};
			\filldraw[black] (1,2) circle (1.5pt);
			\filldraw[black] (1,1) circle (1.5pt);
		}
		~~&\underset{\phi}{\Rightarrow}~~
		\tikzmath{
			\draw[thick] (0,-1) -- (0, 2.5)  arc (180:0:.5cm) -- (1,.5)  arc (-180:0:.5cm) -- (2, 4);
			\roundNbox{fill=white}{(1,1)}{.3}{.05}{.05}{$\eta$};
			\roundNbox{fill=white}{(0,2)}{.3}{.05}{.05}{${\cF f}$};
			\filldraw[black] (1,1.5) circle (1.5pt);
			\filldraw[black] (1,.5) circle (1.5pt);
		}
		~~\underset{}{\Rightarrow}~~
		\tikzmath{
			\draw[thick] (0,-1) -- (0, 2.5)  arc (180:0:.5cm) -- (1,.5)  arc (-180:0:.5cm) -- (2, 4);
			\roundNbox{fill=white}{(1,1)}{.3}{.05}{.05}{$\eta$};
			\roundNbox{fill=white}{(1,2)}{.3}{.25}{.25}{${^*\cF f}$};
			\filldraw[black] (1,1.5) circle (1.5pt);
			\filldraw[black] (1,.5) circle (1.5pt);
		}
		~~\underset{\text{nat.~} \bullet}{\Rightarrow}~~
		\tikzmath{
			\draw[thick] (0,-1) -- (0, 2.5)  arc (180:0:.5cm) -- (1,.5)  arc (-180:0:.5cm) -- (2, 4);
			\roundNbox{fill=white}{(1,1)}{.3}{.05}{.05}{$\eta$};
			\roundNbox{fill=white}{(1,1.75)}{.3}{.25}{.25}{${\cF (^*f)}$};
			\filldraw[black] (1,2.5) circle (1.5pt);
			\filldraw[black] (1,.5) circle (1.5pt);
		} \nonumber \\
		~~\underset{\text{nat.~} \eta}{\Rightarrow}~~
		\tikzmath{
			\draw[thick] (0,-1) -- (0, 2.5)  arc (180:0:.5cm) -- (1,.5)  arc (-180:0:.5cm) -- (2, 4);
			\roundNbox{fill=white}{(1,1.75)}{.3}{.05}{.05}{$\eta$};
			\roundNbox{fill=white}{(1,1)}{.3}{.25}{.25}{${\cF (^*f)}$};
			\filldraw[black] (1,2.5) circle (1.5pt);
			\filldraw[black] (1,.5) circle (1.5pt);
		} 
		~~&\underset{\text{nat.~} \delta}{\Rightarrow}~~
		\tikzmath{
			\draw[thick] (0,-1) -- (0, 2.5)  arc (180:0:.5cm) -- (1,.5)  arc (-180:0:.5cm) -- (2, 4);
			\roundNbox{fill=white}{(1,2)}{.3}{.05}{.05}{$\eta$};
			\roundNbox{fill=white}{(1,1)}{.3}{.25}{.25}{${^*\cF f}$};
			\filldraw[black] (1,2.5) circle (1.5pt);
			\filldraw[black] (1,1.5) circle (1.5pt);
		} 
		~~\underset{}{\Rightarrow}~~
		\tikzmath{
			\draw[thick] (0,-1) -- (0, 2.5)  arc (180:0:.5cm) -- (1,.5)  arc (-180:0:.5cm) -- (2, 4);
			\roundNbox{fill=white}{(1,2)}{.3}{.05}{.05}{$\eta$};
			\roundNbox{fill=white}{(2,1)}{.3}{.05}{.05}{${\cF f}$};
			\filldraw[black] (1,2.5) circle (1.5pt);
			\filldraw[black] (1,1.5) circle (1.5pt);
		}
		~~\underset{\phi}{\Rightarrow}~~
		\tikzmath{
			\draw[thick] (0,-1) -- (0, 2.5)  arc (180:0:.5cm) -- (1,.5)  arc (-180:0:.5cm) -- (2, 4);
			\roundNbox{fill=white}{(1,1.5)}{.3}{.05}{.05}{$\eta$};
			\roundNbox{fill=white}{(2,3.5)}{.3}{.05}{.05}{${\cF f}$};
			\filldraw[black] (1,2) circle (1.5pt);
			\filldraw[black] (1,1) circle (1.5pt);
		} 
	\end{align}
	The unlabeled 2-cells are built from cusps and interchangers as follows: The first is an instance of the general 2-isomorphism:
	\begin{equation}\label{CuspTransferEquation}
		\tikzmath{
			\draw[thick] (0,0) -- (0, 3.5) arc (180:0:.5cm) -- (1,0);
			\roundNbox{fill=white}{(0,3)}{.3}{.05}{.05}{${g}$};
		} ~~\underset{\text{cusp}}{\Rightarrow}~~
		\tikzmath{
			\draw[thick] (-2,0) -- (-2,2) arc (180:0:.5cm) -- (-1,.5) arc (-180:0:.5cm) -- (0, 3.5) arc (180:0:.5cm) -- (1,0);
			\roundNbox{fill=white}{(0,3)}{.3}{.05}{.05}{${g}$};
		}
		~~\underset{\phi}{\Rightarrow}~~
		\tikzmath{
			\draw[thick] (-2,0) -- (-2,3.5) arc (180:0:.5cm) -- (-1,.5) arc (-180:0:.5cm) -- (0, 2) arc (180:0:.5cm) -- (1,0);
			\roundNbox{fill=white}{(0,1.5)}{.3}{.05}{.05}{${g}$};
		}
		~~=~~
		\tikzmath{
			\draw[thick] (0,0) -- (0, 3.5) arc (180:0:.5cm) -- (1,0);
			\roundNbox{fill=white}{(1,3)}{.3}{.05}{.05}{${^*g}$};
		}
		,
	\end{equation}and the second one is similar. Next, any two choices of duals for $X$, and canonical isomorphism $\bullet$ between them, we have the 2-isomorphism pictured below, with the duals depicted by the dotted and dashed strings.  
\begin{equation}\label{EvaluationReplacement}
\tikzmath{
	\draw[thick] (0,0) -- (0,.5);
	\draw[thick, dashed] (0, .5) arc (180:0:.5cm);
	\draw[thick, dotted] (1, .5) -- (1, 0);
	\filldraw[black] (1,.5) circle (1.5pt);
} 
\quad \coloneqq \quad 
\tikzmath{
	\draw[thick, dashed] (0,0)--(0, .5) arc (180:0:.5cm) -- (1, -1.5) arc (-180:0:0.5cm);
	\draw[thick, black] (2, -1.5) -- (2, -.5);
	\draw[thick, dotted] (2, -.5) arc (180:0:.5cm) -- (3, -2);
} 
\quad \underset{\text{cusp}}{\Rightarrow} \quad 
\tikzmath{
	\draw[thick] (0,0) -- (0,.5);
	\draw[thick, dotted] (0, .5) arc (180:0:.5cm) -- (1, 0);
}
\end{equation}
There is of course a similar one for the coevaluation. These morphisms commute with \eqref{CuspTransferEquation} by naturality of the interchanger.
	Then we have the natural isomorphism:
	\begin{equation} \label{2AntipodeAxiomSnake}
		\mu \circ (1 \otimes \cS)\circ \Delta(\eta)_X \cong 
		\tikzmath{
			\draw[thick] (0,-1) -- (0, 1.5)  arc (180:0:.5cm) -- (1,.5)  arc (-180:0:.5cm) -- (2, 2.666);
			\roundNbox{fill=white}{(1,1)}{.3}{.05}{.05}{${\eta_{(2)}}$};
			\roundNbox{fill=white}{(0,-.5)}{.3}{.05}{.05}{${\eta_{(1)}}$};
			\filldraw[black] (1,1.5) circle (1.5pt);
			\filldraw[black] (1,.5) circle (1.5pt);
		} \quad, 
	\end{equation}
	which appears different from  $\eqref{1AntipodeAxiomSnake}$ since the interchanger is required to move $\eta_{(1)}$ past the coevaluation. The naturator simply moves $f$ past $\eta_{(1)}$ and then repeats the movie $\eqref{2SnakeNaturator}$. 
	We must next finally define an invertible modification $ \Omega \colon \mu \circ (1 \otimes S)\circ \Delta(\eta)\Rrightarrow \iota \circ \epsilon(\eta)$. To do so, we use the definition of Sweedler notation, the canonical 2-morphisms \eqref{EvaluationReplacement}, and the naturality of $\eta$ as follows.
	\begin{align}\label{OmegaDefinition}
		\Omega_X \coloneqq&
		\tikzmath{
			\draw[thick] (0,-1) -- (0, 2.5)  arc (180:0:.5cm) -- (1,.5)  arc (-180:0:.5cm) -- (2, 4);
			\roundNbox{fill=white}{(1,1.5)}{.3}{.05}{.05}{${\eta_{(2)}}$};
			\roundNbox{fill=white}{(0,-.5)}{.3}{.05}{.05}{${\eta_{(1)}}$};
			\filldraw[black] (1,2) circle (1.5pt);
			\filldraw[black] (1,1) circle (1.5pt);
		}
		~~\underset{\phi}{\Rightarrow}~~
		\tikzmath{
			\draw[thick] (0,-1) -- (0, 2.5)  arc (180:0:.5cm) -- (1,.5)  arc (-180:0:.5cm) -- (2, 4);
			\roundNbox{fill=white}{(1,1.5)}{.3}{.05}{.05}{${\eta_{(2)}}$};
			\roundNbox{fill=white}{(0,1)}{.3}{.05}{.05}{${\eta_{(1)}}$};
			\filldraw[black] (1,2) circle (1.5pt);
			\filldraw[black] (1,.5) circle (1.5pt);
		}
		~~\underset{}{\Rightarrow}~~
		\tikzmath{
			\draw[thick] (0,-1) -- (0, 2.5)  arc (180:0:.5cm) -- (1,.5)  arc (-180:0:.5cm) -- (2, 4);
			\roundNbox{fill=white}{(.5,1.25)}{.4}{.5}{.5}{${\Delta(\eta)}$};
			\filldraw[black] (1,2) circle (1.5pt);
			\filldraw[black] (1,.5) circle (1.5pt);
		}
		~~\underset{}{\Rightarrow}~~
		\tikzmath{
			\draw[thick] (0,-1) -- (0,2);
			\draw[thick] (.2,2) -- (.2,2.7) arc (180:0:.3cm) -- (.8, 2); 
			\draw[thick] (1,2) -- (1,.5)  arc (-180:0:.5cm) -- (2, 4);
			\roundNbox{fill=white}{(.5,1.25)}{.4}{.5}{.5}{${\Delta(\eta)}$};
			\roundNbox{fill=white}{(.5,2.25)}{.4}{.5}{.5}{${J}$};
			\filldraw[black] (1,.5) circle (1.5pt);
		} \\
		~~\underset{\text{def}}{\Rightarrow}~~&
		\tikzmath{
			\draw[thick] (0,-1) -- (0,1.5);
			\draw[thick] (.2,1.5) -- (.2,2.7) arc (180:0:.3cm) -- (.8, 1.5); 
			\draw[thick] (1,1.5) -- (1,.5)  arc (-180:0:.5cm) -- (2, 4);
			\roundNbox{fill=white}{(.5,2.25)}{.4}{.5}{.5}{${\eta}$};
			\roundNbox{fill=white}{(.5,1.25)}{.4}{.5}{.5}{${J}$};
			\filldraw[black] (1,.5) circle (1.5pt);
		} 
		~~\underset{\text{nat.~} \eta}{\Rightarrow}~~ 
		\tikzmath{
			\draw[thick] (0,-1) -- (0,1.5);
			\draw[thick] (.2,1.5) -- (.2,2.7) arc (180:0:.3cm) -- (.8, 1.5); 
			\draw[thick] (1,1.5) -- (1,.5)  arc (-180:0:.5cm) -- (2, 4);
			\roundNbox{fill=white}{(.5,3.5)}{.3}{0}{0}{${\eta}$};
			\roundNbox{fill=white}{(.5,1.25)}{.4}{.5}{.5}{${J}$};
			\filldraw[black] (1,.5) circle (1.5pt);
		}
		~~\underset{}{\Rightarrow}~~
		\tikzmath{
			\draw[thick] (0,-1) -- (0, 2.5)  arc (180:0:.5cm) -- (1,.5)  arc (-180:0:.5cm) -- (2, 4);
			\roundNbox{fill=white}{(.5,3.5)}{.3}{0}{0}{${\eta}$};
			\filldraw[black] (1,2) circle (1.5pt);
			\filldraw[black] (1,1) circle (1.5pt);
		}
		~~\underset{\text{cusp}}{\Rightarrow}~~
		\tikzmath{
			\draw[thick] (1, -1) -- (1, 4);
			\roundNbox{fill=white}{(.5,1.5)}{.3}{0}{0}{${\eta}$};
		}   \nonumber
	\end{align}

	We verify in Propositions \ref{2AntipodeAxiomVerification} and \ref{2AntipodeAxiomNaturality} that this morphism satisfies the modification axiom, and is natural in $X$; that is the modifications above form a natural transformation between the functors $(\mu \circ (1 \otimes S)\circ \Delta)$ and $\iota \circ \epsilon$.
	
	
	
	Likewise, from the antipode $\cS$ on a 2-Hopf algebra, given $\rho \colon C \to \End(M)$ we obtain the following  on $\Mod(C)$:
	\begin{align*}
		\rho^* &\coloneqq~ (\rho \circ \cS)^*\\
		^*\rho &\coloneqq ~^*(\rho \circ \cS^{-1})
	\end{align*}
	Since the antipode is always an algebra antihomomorphism \cite[47]{neuchl}, these composites are monoidal functors.
	The natural transformations given as part of the antipode provide the remaining duality data. Since the space of antipodes on a 2-bialgebra is contractible, see \cite[Lemma 3.16]{neuchl}, we are finished. 
\end{proof}
\begin{remark}

The Hopf axiom \eqref{Hopf2Axiom} is satisfied when the cusps are chosen to satisfy the swallowtail equations. In this framework, the fact that every dual can be made coherent \cite[Corollary 2.8]{pstrągowski2014dualizable} corresponds precisely to the fact that any natural isomorphism constructed above can be modified to satisfy \eqref{Hopf2Axiom} as stated by Neuchl. 

\end{remark}
We are now able to construct the double of a Hopf category. For any monoidal 2-category $\cC$, there exists a Drinfeld center $\cZ(\cC)$ \cite{Baez1995HigherDA}, which is finite semisimple if $\cC$ is \cite{decoppet2023drinfeld}. Additionally, $\cZ(\cC)$ comes equipped with a locally faithful, monoidal forgetful functor to $\cC$. 
\begin{construction}
The quantum double of a Hopf category $C$ is the Hopf category $\End(\tilde \cF)$, where $\tilde \cF$ is the composite fiber functor $\cZ(\Mod(C)) \to \Mod(C) \to \TwoVec$. 
\end{construction}


\appendix

\section{Calculations for 2-categories}\label{2CatAppendix}
In this appendix we explicitly define the symmetric monoidal 2-category structure on $\TwoSlice$ and verify that $\cQ$ is a symmetric monoidal functor, as well as the antipode axiom. 
\subsection{The monoidal 2-category 2-slice}
To start, we work with any finite semisimple 2-category $\cV$. We work in a slightly more general context so that these results will apply to the underlying 2-functors in the enriched case. 

\begin{defn} \label{2SliceDefn}
	$\TwoSlice_\cV$ is the 3-category with the following $\Hom$-sets.  
	\begin{itemize}
		\item Objects $(\cC, \cF)$ where $\cC$ is a finite semisimple 2-category and $\cF$ is a locally faithful 2-functor $\cC \to \cV$. 
		\item A 1-morphism from $(\cC, \cF)$ to $(\cD, \cG)$ is a pair $(\cT, \tau)$ where $\cT \colon \cC \to \cD$ is a pair $(\cT, \tau)$ where $\cT$ is a 2-functor and $\cT \colon \cF \Rightarrow \cD$ and $\tau$ is a natural equivalence $\cF \Rightarrow \cG\cT$. 
		\item A 2-morphism from $(\cT, \tau)$ to $(\cT', \tau')$ is a pair $(\sigma, \Sigma)$ where $\sigma$ is a natural transformation $\cT' \Rightarrow \cT$ and $\Sigma$ is an invertible modification as below:
		\[\begin{tikzcd}
			\cC && \cD && \cC && \cD \\
			&& {} && {} \\
			& \cV &&&& \cV
			\arrow["\cT", from=1-1, to=1-3]
			\arrow[""{name=0, anchor=center, inner sep=0}, "\cF"', from=1-1, to=3-2]
			\arrow["\cG", from=1-3, to=3-2]
			\arrow["\cG", from=1-7, to=3-6]
			\arrow[""{name=1, anchor=center, inner sep=0}, "{\cT'}"', curve={height=12pt}, from=1-5, to=1-7]
			\arrow[""{name=2, anchor=center, inner sep=0}, "\cF"', from=1-5, to=3-6]
			\arrow[""{name=3, anchor=center, inner sep=0}, "\cT", curve={height=-12pt}, from=1-5, to=1-7]
			\arrow["\Sigma", triple, from=2-3, to=2-5]
			\arrow["\tau"', shorten <=16pt, shorten >=16pt, Rightarrow, from=0, to=1-3]
			\arrow["{\tau'}"'{pos=0.3}, shift right=4, shorten <=8pt, shorten >=24pt, Rightarrow, from=2, to=1-7]
			\arrow["\sigma"', shorten <=3pt, shorten >=3pt, Rightarrow, from=1, to=3]
		\end{tikzcd}\]
		which is also expressible as
		\[\begin{tikzcd}
			\cT && {\cG\cT'} \\
			\\
			& \cG\cT
			\arrow["\cG\sigma"', Rightarrow, to=3-2, swap, from=1-3]
			\arrow["\tau'", Rightarrow, from=1-1, to=1-3]
			\arrow[""{name=0, anchor=center, inner sep=0}, "\tau"', Rightarrow, from=1-1, to=3-2]
			\arrow["\Sigma"', triple, shorten <=19pt, shorten >=19pt, from=0, to=1-3]
		\end{tikzcd}\]
		\item A 3-morphism from $(\Sigma, \sigma)$ to $(\Sigma', \sigma')$ is a modification $\Theta \colon \sigma' \Rrightarrow \sigma$ such that: 
\[\begin{tikzcd}
	\cT && {\cG\cT'} && \cT && \cG\cT' \\
	&&& {=} \\
	& \cG\cT &&&& \cG\cT
	\arrow["\cG\sigma"', Rightarrow, from=3-2, to=1-3]
	\arrow["{\tau'}", Rightarrow, from=1-1, to=1-3]
	\arrow[""{name=0, anchor=center, inner sep=0}, "\tau"', Rightarrow, from=1-1, to=3-2]
	\arrow[""{name=1, anchor=center, inner sep=0}, "\tau"', Rightarrow, from=1-5, to=3-6]
	\arrow["{\tau'}", Rightarrow, from=1-5, to=1-7]
	\arrow[""{name=2, anchor=center, inner sep=0}, "{\cG\sigma'}"'{pos=0.7}, curve={height=15pt}, Rightarrow, from=1-7, to=3-6]
	\arrow[""{name=3, anchor=center, inner sep=0}, "\cG\sigma", curve={height=-15pt}, Rightarrow, from=1-7, to=3-6]
	\arrow["\Sigma"', shorten <=19pt, triple, shorten >=19pt, from=0, to=1-3]
	\arrow["{\Sigma'}"{pos=0.3}, shift left=3, triple, shorten <=14pt, shorten >=24pt, from=1, to=1-7]
	\arrow["{\cG\Theta}", triple, shorten <=5pt, shorten >=5pt, from=2, to=3]
\end{tikzcd}\]
	\end{itemize}
	The directions of composition may at first seem strange; compare \cite{Jones_2022}, Definition 3.2. We declare at this point that composition of 3-morphisms is composition of modifications. 
\end{defn}
Before defining the rest of the composition rules, we need the following:
\begin{lem}
	$\TwoSlice_\cV$ is 2-truncated. 
\end{lem}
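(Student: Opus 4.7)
The plan is to mirror the argument of Lemma \ref{1Slice1Truncated} one categorical level up, using local faithfulness of $\cG$ in place of faithfulness of $F$, and local semisimplicity of $\cD$ in place of the balanced property of a semisimple $1$-category.

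Suppose $\Theta, \Theta' \colon (\sigma, \Sigma) \Rrightarrow (\sigma', \Sigma')$ are two $3$-morphisms. The defining axiom of a $3$-morphism (the pasting equality displayed in Definition \ref{2SliceDefn}) can be rewritten, after whiskering with $\tau$ on the right, as the equation of $3$-cells
\[
(\cG\Theta \ast \id_\tau) \circ \Sigma' = \Sigma
\]
in the $3$-category of $2$-functors into $\cV$. Since $\tau$ is a natural equivalence and $\Sigma'$ is an invertible modification, we may solve for
\[
\cG\Theta = \bigl(\Sigma \circ (\Sigma')^{-1}\bigr) \ast \id_{\tau^{-1}},
\]
modulo the invertible unit modification of the adjoint equivalence $\tau \dashv \tau^{-1}$. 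The same computation with $\Theta'$ in place of $\Theta$ produces the \emph{same} right-hand side. Hence $\cG\Theta = \cG\Theta'$.

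Now local faithfulness of $\cG$ means that, for every pair of objects in $\cC$, the induced functor on Hom-categories is faithful on $2$-morphisms. Evaluating the equality $\cG\Theta = \cG\Theta'$ component-wise (i.e., at each $c \in \cC$) gives $\cG(\Theta_c) = \cG(\Theta'_c)$ as $2$-cells in $\cV$; faithfulness then yields $\Theta_c = \Theta'_c$ for every $c$, so $\Theta = \Theta'$. This proves uniqueness.

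For invertibility, observe that the explicit formula for $\cG\Theta$ above exhibits it as a composite of invertible modifications ($\Sigma$, $(\Sigma')^{-1}$, and the unit of $\tau \dashv \tau^{-1}$), hence each component $\cG(\Theta_c)$ is invertible in $\cV$. Since $\cD$ is locally semisimple, each Hom-category $\cD(a,b)$ is a semisimple $1$-category, which is balanced, and a faithful linear functor from a balanced category reflects isomorphisms (Lemma \ref{FaithfulBalancedImpliesConservative}). Applying this to the restriction of $\cG$ at the appropriate Hom-categories, $\Theta_c$ is invertible for each $c$, and so $\Theta$ is an invertible modification.

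The only subtle point is bookkeeping the whiskering with $\tau$ and its inverse in the first step; once the axiom is massaged into an equation of the form ``$\cG\Theta$ equals a fixed expression in already-given invertible data,'' both uniqueness and invertibility follow formally from local faithfulness and local semisimplicity exactly as in the $1$-categorical case. I expect no serious obstacle beyond writing out the pasting diagram manipulation cleanly.
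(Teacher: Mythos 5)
Your proof is correct and follows essentially the same route as the paper: the paper's (one-line) argument likewise solves the 3-morphism axiom for $\cG\Theta$ pointwise as $\Sigma'\Sigma^{-1}$, then invokes local faithfulness of $\cG$ for uniqueness and semisimplicity of the Hom-categories (via Lemma \ref{FaithfulBalancedImpliesConservative}) for invertibility, exactly mirroring Lemma \ref{1Slice1Truncated}. Your version just spells out the whiskering bookkeeping, and correctly identifies local semisimplicity of $\cD$ (rather than $\cV$) as the hypothesis needed to reflect invertibility.
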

\begin{proof}
	$\cG(\Theta)$ can be written pointwise as $\Sigma'\Sigma^{-1}$. Since $\cG$ was locally faithful and $\cV$ is locally semisimple, $\Theta$ is also an isomorphism, and uniquely determined. 
\end{proof}
\begin{construction}
	We now define the remaining two compositions and two natural isomorphisms required to make $\TwoSlice_\cV$ a 2-category. We will not mention the 3-morphisms, as all their assignments are forced. 
	The vertical composition of two composable 2-morphisms $(\Sigma, \sigma) \colon (\cT, \tau) \Rightarrow (\cT', \tau')$ and $(\Sigma', \sigma') \colon (\cT', \tau') \Rightarrow (\cT'', \tau'')$ is $\sigma'\sigma$ along with the whiskered modification:
	\[\begin{tikzcd}
		\cG\cT && {\cG\cT'} && {\cG\cT''} \\
		\\
		{} && \cT && {}
		\arrow["\cG\sigma", Rightarrow, to=1-1, from=1-3]
		\arrow["{\tau'}", Rightarrow, from=3-3, to=1-3]
		\arrow["\tau"', swap, Rightarrow, from=3-3, to=1-1]
		\arrow["{\tau''}", swap, Rightarrow, from=3-3, to=1-5]
		\arrow["{\cG\sigma'}", Rightarrow, from=1-5, to=1-3]
		\arrow["\Sigma"'{pos=0.8}, triple, shorten <=55pt, shorten >=6pt, from=3-1, to=1-3]
		\arrow["{\Sigma'}"'{pos=0.2}, triple, shorten <=6pt, shorten >=55pt, from=1-3, to=3-5]
	\end{tikzcd}\]
	We next define the horizontal composition functors on 1-morphisms by
	\begin{equation*}
		\begin{tikzcd}
			\cC && \cD \\
			\\
			& \cV
			\arrow["\cT", from=1-1, to=1-3]
			\arrow[""{name=0, anchor=center, inner sep=0}, "\cF"', from=1-1, to=3-2]
			\arrow["\cG", from=1-3, to=3-2]
			\arrow["\tau"', shorten <=16pt, shorten >=16pt, Rightarrow, from=0, to=1-3]
		\end{tikzcd} \circ 
		\begin{tikzcd}
			\cD && \cE \\
			\\
			& \cV
			\arrow["\cK", from=1-1, to=1-3]
			\arrow[""{name=0, anchor=center, inner sep=0}, "\cG"', from=1-1, to=3-2]
			\arrow["\cH", from=1-3, to=3-2]
			\arrow["\kappa"', shorten <=16pt, shorten >=16pt, Rightarrow, from=0, to=1-3]
		\end{tikzcd}
		\coloneqq
		\begin{tikzcd}
			\cC && \cD && \cE \\
			\\
			{} && \cV && {}
			\arrow["\cT", from=1-1, to=1-3]
			\arrow["\cF"', from=1-1, to=3-3]
			\arrow["\cG", from=1-3, to=3-3]
			\arrow["\tau"{pos=0.7}, shorten <=49pt, shorten >=6pt, Rightarrow, from=3-1, to=1-3]
			\arrow["\cK", from=1-3, to=1-5]
			\arrow["\cH", from=1-5, to=3-3]
			\arrow["\kappa"{pos=0.3}, shorten <=6pt, shorten >=49pt, Rightarrow, from=1-3, to=3-5]
		\end{tikzcd}
	\end{equation*}
	and on 2-morphisms by 
	\begin{equation}\label{2Slice2MorHorizontalComposition}
		\hspace{-.5cm}
		\begin{tikzcd}
			\cT && {\cG\cT'} \\
			\\
			& \cG\cT
			\arrow["\cG\sigma"', Rightarrow, to=3-2, swap, from=1-3]
			\arrow["\tau'", Rightarrow, from=1-1, to=1-3]
			\arrow[""{name=0, anchor=center, inner sep=0}, "\tau"', Rightarrow, from=1-1, to=3-2]
			\arrow["\Sigma"', triple, shorten <=19pt, shorten >=19pt, from=0, to=1-3]
		\end{tikzcd}
		\circ 
		\begin{tikzcd}
			\cK && {\cH\cK'} \\
			\\
			& \cH\cK
			\arrow["\cH\lambda"', Rightarrow, to=3-2, swap, from=1-3]
			\arrow["\kappa'", Rightarrow, from=1-1, to=1-3]
			\arrow[""{name=0, anchor=center, inner sep=0}, "\kappa"', Rightarrow, from=1-1, to=3-2]
			\arrow["\Lambda"', triple, shorten <=19pt, shorten >=19pt, from=0, to=1-3]
		\end{tikzcd} \coloneqq 
		\hspace{-.75cm}\begin{tikzcd}
			\cF && {\cG\cT'} && {\cH\cK'\cT'} \\
			\\
			{} && \cG\cT && {\cH\cK'\cT} \\
			\\
			&& {} && \cH\cK\cT
			\arrow["\tau"', Rightarrow, from=1-1, to=3-3]
			\arrow["\kappa\cT"', Rightarrow, from=3-3, to=5-5]
			\arrow["{\tau'}", Rightarrow, from=1-1, to=1-3]
			\arrow["\cG\sigma", Rightarrow, from=1-3, to=3-3]
			\arrow["{\kappa'\cT}", Rightarrow, from=3-3, to=3-5]
			\arrow["\cH\lambda\cT", Rightarrow, from=3-5, to=5-5]
			\arrow["\cH\cK\sigma", Rightarrow, from=1-5, to=3-5]
			\arrow["{\kappa'\cT'}", Rightarrow, from=1-3, to=1-5]
			\arrow["{\kappa'_{\sigma}}", triple, shorten <=26pt, shorten >=26pt, from=3-3, to=1-5]
			\arrow["\Sigma"{pos=0.7}, triple ,shorten <=51pt, shorten >=6pt, from=3-1, to=1-3]
			\arrow["\Lambda\cT"{pos=0.7}, triple, shorten <=51pt, shorten >=7pt, from=5-3, to=3-5]
		\end{tikzcd}
	\end{equation} The modification $\kappa'_\sigma$ has component at $c$ the naturality 2-cell $\kappa'_{\sigma_c}$. This equation has the consequence that when horizontally composing two morphisms in $\TwoSlice_\cV$ and converting the resulting whiskered composite to a pasting diagram, an extra region appears! The pasting theorem for bicategories implies this region is generally safe to omit, but when choosing a bracketing for a diagram, it may appear. 
	The unit 1,2 and 3- morphisms are those with identity components. 
	
	The associator and unitors for 1-composition are inherited pointwise from $\cV$, and consequently satisfy the requisite axioms. 
\end{construction}
We will frequently use the following lemma, sometimes without mention. It is proven similarly to lemma \ref{EquivalenceInSlice}, by choosing biadjoint biequivalence data for $\cT$ and adjoint equivalence data for $\tau$ and $\sigma$.
\begin{lem}\label{EquivalenceInTwoSlice}
A morphism $(\cT, \tau)$ is an equivalence in $\Slice$ iff $\cT$ is an equivalence. A 2-morphism $(\sigma, \Sigma)$ is an isomorphism iff $\sigma$ is an equivalence.
\end{lem}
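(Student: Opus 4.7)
\emph{Plan.} The argument mirrors that of Lemma \ref{EquivalenceInSlice} one categorical level up. The forgetful 3-functor $\cU \colon \TwoSlice_\cV \to \ThreeVec$ sending $(\cC,\cF)\mapsto \cC$, $(\cT,\tau)\mapsto \cT$, $(\sigma,\Sigma)\mapsto \sigma$ evidently preserves equivalences and invertible 2-cells, so the ``only if'' directions of both claims are automatic: if $(\cT,\tau)$ is an equivalence then $\cT = \cU(\cT,\tau)$ is, and similarly for $(\sigma,\Sigma)$.

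For the converse at the level of 1-morphisms, suppose $\cT$ is an equivalence of 2-categories. I would fix a pseudoinverse $\cT^{-1} \colon \cD \to \cC$ together with a biadjoint biequivalence: invertible 2-natural transformations $\eta \colon \id_\cC \simeq \cT^{-1}\cT$ and $\epsilon \colon \cT \cT^{-1} \simeq \id_\cD$ satisfying the swallowtail modifications. I also fix an adjoint inverse $\tau^{-1} \colon \cG\cT \simeq \cF$ to the given equivalence $\tau$. Define the candidate inverse $(\cT^{-1}, \tau') \colon (\cD,\cG) \to (\cC,\cF)$ by taking $\tau'$ to be the composite equivalence
\[
\cG \;\xRightarrow{\;\cG\epsilon^{-1}\;}\; \cG\cT\cT^{-1} \;\xRightarrow{\;\tau^{-1}\cT^{-1}\;}\; \cF\cT^{-1}.
\]
To witness that this is indeed a pseudoinverse in $\TwoSlice_\cV$, the underlying composites $\cT\cT^{-1}$ and $\cT^{-1}\cT$ are equivalent to identity functors via $\epsilon$ and $\eta$ respectively, and the required modifications $\Sigma$ and $\Sigma'$ filling the slice triangles are obtained by pasting the definition of $\tau'$ against $\tau$ and applying the swallowtail identities to collapse the resulting $\eta$–$\epsilon$ composite to an identity. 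The non-triviality is purely bookkeeping: all cells involved are invertible by construction, so one appeals to Lemma \ref{2FunctorAxiomsForQ}-style diagram chases.

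For the 2-morphism converse, suppose $\sigma \colon \cT' \simeq \cT$ is an equivalence of 2-natural transformations. Pick an adjoint equivalence inverse $\sigma^{-1}$. The desired inverse 2-morphism $(\sigma^{-1}, \Sigma^{-1}) \colon (\cT',\tau') \Rightarrow (\cT,\tau)$ has its modification $\Sigma^{-1}$ defined by whiskering $\Sigma$ by $\cG\sigma^{-1}$ and composing with the unit/counit modifications of the chosen adjoint equivalence between $\cG\sigma$ and $\cG\sigma^{-1}$. Verification that $\Sigma \circ \Sigma^{-1}$ and $\Sigma^{-1} \circ \Sigma$ are identity modifications is forced by the 2-truncation of $\TwoSlice_\cV$ together with local semisimplicity (the argument of Lemma \ref{1Slice1Truncated}).

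\emph{Main obstacle.} The only substantive difficulty is coherence bookkeeping: one must be careful that the biadjoint biequivalence data chosen for $\cT$ and the adjoint equivalence data chosen for $\tau$ and $\sigma$ fit together so that the constructed modifications $\Sigma^{\pm 1}$ satisfy the axioms of Definition \ref{2SliceDefn}. This is where the swallowtail identities are used, and where the 2-truncation of $\TwoSlice_\cV$ is most helpful, as it guarantees any two candidates for $\Sigma^{-1}$ are automatically equal and hence one need only exhibit a single suitable choice rather than verify a universal property.
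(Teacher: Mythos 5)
Your proposal is correct and follows exactly the route the paper indicates: the paper's (one-sentence) proof is precisely ``choose biadjoint biequivalence data for $\cT$ and adjoint equivalence data for $\tau$ and $\sigma$,'' which is what you do, with the 2-truncation of $\TwoSlice_\cV$ handling the remaining coherence. Your explicit formula for $\tau'$ and the use of the swallowtail identities are a reasonable fleshing-out of the details the paper leaves implicit.
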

\begin{construction} We next construct the monoidal structure on $\TwoSlice_\cV$ using the Deligne 2-tensor product $\boxdot$. This process is exactly akin to constructing the monoidal 1-category structure on $\Vec/A$ where $A$ is an algebra or the 2-category structure on $\TwoVec/C$ when $C$ is monoidal. Let $(\cC, \cF)$ and $(\cD, \cG)$ be objects in $\TwoSlice$. The functor $\cF \times \cG$ is bilinear and so we obtain a functor $\cF \boxdot \cG \colon \cC \boxdot \cD \to \cV \boxdot \cV$. Composing with the induced morphism tensor product, we get an assignment on objects 
$$((\cC, \cF) \boxdot (\cD, \cG)) \coloneqq (\cC \boxdot \cD, \otimes_\cV (\cF \boxdot\cG)) $$
Continuing, given a pair of morphisms: 
	\begin{equation*}
	\begin{tikzcd}
		\cC && \cC' \\
		\\
		& \cV
		\arrow["\cT", from=1-1, to=1-3]
		\arrow[""{name=0, anchor=center, inner sep=0}, "\cF"', from=1-1, to=3-2]
		\arrow["{\cF'}", from=1-3, to=3-2]
		\arrow["\tau"', shorten <=16pt, shorten >=16pt, Rightarrow, from=0, to=1-3]
	\end{tikzcd}\quad,\quad 
	\begin{tikzcd}
		\cD && \cD' \\
		\\
		& \cV
		\arrow["\cK", from=1-1, to=1-3]
		\arrow[""{name=0, anchor=center, inner sep=0}, "\cG"', from=1-1, to=3-2]
		\arrow["{\cG'}", from=1-3, to=3-2]
		\arrow["\kappa"', shorten <=16pt, shorten >=16pt, Rightarrow, from=0, to=1-3]
	\end{tikzcd}
\end{equation*}
we get a functor $\cT \boxtimes \cK$ by universal property.
Since $\tau \times \kappa$ is a natural transformation $\cF \times \cG \Rightarrow \cF'\cT \times \cG'\cK = (\cF' \times \cG')(\cT \times \cK)$ we induce $\tau \boxdot \kappa$ filling the obvious triangle, and we may repeat this argument essentially unchanged to induce both the vertical and horizontal functoriality constraints of $\boxdot$. In order for $\boxdot$ to be well defined, we must have that the product of two faithful functors is again faithful. This follows from Theorem 4.6 of \cite{decoppet2023drinfeld} as we are working over an algebraically closed field. We turn our attention now to the morphisms that make up the monoidal structure.

The associator is induced as follows: There is a canonical equivalence of 2- categories \cite[Lemma 5.1]{decoppet2023finite},
$$(\cC \boxdot \cC') \boxdot \cC'' \sim_{\cC, \cC', \cC''} \cC \boxdot (\cC' \boxdot \cC'')$$

and so we can define the associator 1-morphism as: 
\[\begin{tikzcd}
	{\cC(\cC'\cC'')} &&&& {\cC(\cC'\cC'')} \\
	& {} && {} \\
	& {(\cV\cV)\cV} && {(\cV\cV)\cV} \\
	& \cV\cV && \cV\cV \\
	&& \cV
	\arrow["{\sim_{\cV, \cV, \cV}}", from=3-2, to=3-4]
	\arrow["\boxtimes1", from=3-2, to=4-2]
	\arrow["\boxtimes", from=4-2, to=5-3]
	\arrow["1\boxtimes"', from=3-4, to=4-4]
	\arrow["\boxtimes"', from=4-4, to=5-3]
	\arrow["{\alpha_\cV}", shift left=3, shorten <=25pt, shorten >=25pt, Rightarrow, from=4-2, to=4-4]
	\arrow["{\cF(\cF'\cF'')}", from=1-1, to=3-2]
	\arrow["{\sim_{\cC, \cC',\cC''}}"', from=1-1, to=1-5]
	\arrow[curve={height=40pt}, from=1-1, to=5-3]
	\arrow["{\cF(\cF'\cF'')}"', from=1-5, to=3-4]
	\arrow["{\sim_{\cF, \cF',\cF''}}", shorten <=31pt, shorten >=31pt, Rightarrow, from=2-2, to=2-4]
	\arrow[curve={height=-40pt}, from=1-5, to=5-3]
\end{tikzcd}\]

We have omitted the $\boxdot$ symbol for brevity. The unlabeled arrows are the functors defined by the tensor product; they are equal to the composite with which they bound a region after precomposing with the universal functor into the product; but the induced transformation may be nonidentity. The other cells in the diagram are all induced by universal property and the fact that the 2-deligne tensor product is natural. While the morphisms $\sim_{\cC, \cC', \cC''}$ are a good candidate to begin defining a monoidal structure on $\ThreeVec$, we make no claims about the morphisms in the above diagram beyond existence and naturality (as morphisms in $\TwoSlice$, so up to necessarily invertible modification). 

The unit object is $\id_\cV$ with unitality transformations and modifications induced by whiskering with those of $\cV$ in a similar fashion. The unit and pentagonator modifications are also induced this way, with their unitality naturality properties inherited from those of $\cV$.
 
All axioms are checked simultaneously the same way. Let $M_1$ and $M_2$ be the two modifications corresponding to any axiom, with source, target and regions all corresponding to coherence morphisms. By construction, the 2-morphism $(M_1M_2^{-1})_{(\cC, \cF)}$ has underlying modification induced from whiskering by the naturality 2-morphisms of $\boxdot$, then applying the appropriate coherence data in $\cV$ pointwise. This last operation is an identity operation since $\cV$ is a monoidal 2-category. Therefore there is a 3-morphism between $M_1$ and $M_2$ and we are done.
\end{construction}
\begin{rem}
The above argument uses the fact that $\TwoSlice_\cV$ is truncated in an essential way and therefore does not use the full power of the 3-universal property of $\boxdot$, which is presumably needed to show that $\ThreeVec$ is a (symmetric) monoidal 3-category. 
\end{rem}
The same argument shows the following: 
\begin{prop} If $\cV$ is braided/sylleptic/symmetric monoidal, then so is $\TwoSlice_\cV$. 
\end{prop}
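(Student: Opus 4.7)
The plan is to mimic verbatim the strategy used in the preceding construction of the monoidal structure on $\TwoSlice_\cV$, which is exactly what the author alludes to by ``the same argument shows''. Given a braiding equivalence $\beta_\cV \colon \otimes_\cV \Rightarrow \otimes_\cV \circ \text{swap}$ on $\cV$, I would define the braiding 1-morphism in $\TwoSlice_\cV$ at a pair $((\cC, \cF), (\cD, \cG))$ as the pair consisting of the swap equivalence $\cC \boxdot \cD \to \cD \boxdot \cC$ (itself induced from $\cC \times \cD \to \cD \times \cC$ via the universal property of $\boxdot$), together with the natural equivalence
\[ \otimes_\cV \circ (\cF \boxdot \cG) \Rightarrow \otimes_\cV \circ (\cG \boxdot \cF) \circ \text{swap} \]
induced by whiskering $\beta_\cV$ against $\cF \boxdot \cG$ and again appealing to the universal property. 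The hexagonator modifications, syllepsis modification, and (for the symmetric case) the symmetry modification are induced pointwise from the corresponding coherence data on $\cV$ in the same manner; at each stage the relevant 2-naturality of $\boxdot$ supplies the missing data and the universal property converts it into a modification in $\TwoSlice_\cV$.

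For the axiom verifications, I would invoke the truncation argument that concludes the monoidal construction. Any axiom in $\TwoSlice_\cV$ amounts to asking for a 3-morphism between two modifications whose underlying data are both obtained by whiskering the same coherence data of $\cV$ against $\cF \boxdot \cG \boxdot \cdots$. Since $\cV$ satisfies its own braided/sylleptic/symmetric axioms by hypothesis, pointwise the two sides agree; by local faithfulness of the fiber functors and the 2-truncation lemma for $\TwoSlice_\cV$ established immediately after Definition \ref{2SliceDefn}, the required 3-morphism exists uniquely and is invertible, completing the check.

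The only real obstacle is organizational: for the sylleptic and symmetric layers one must carefully track the compatibility between the swap 2-functor on Deligne products and the coherence data on $\cV$ through several layers of induced natural equivalence and modification. However, since every axiom reduces to one already satisfied in $\cV$, and $\TwoSlice_\cV$ is 2-truncated, no new 3-cell content arises, and the verification is essentially identical for all three layers of structure. In particular, as in the remark following the monoidal construction, this argument does not require the full 3-universal property of $\boxdot$---only the 2-categorical portion used to define the monoidal structure already suffices.
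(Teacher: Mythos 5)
Your proposal is correct and follows the same route as the paper: the paper's proof is the one-line instruction to induce the braided/sylleptic/symmetric data by composing with that of $\cV$ and to check the axioms by the same truncation argument used for the monoidal structure, which is precisely what you carry out in more detail. Your added observations about the swap functor on Deligne products and the sufficiency of the 2-categorical portion of the universal property are consistent with the paper's construction and the remark following it.
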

\begin{proof}
Induce the desired data by composing with that of $\cV$, and check the axioms by the argument as before. 
\end{proof}
\subsection{The functor $\cQ$}
We now assume $\cV = \TwoVec$.  
\begin{lem} \label{ModSymmetric2Monoidal}
	There is a 2-functor $\Mod(-) \colon \TwoAlg \to \TwoSlice^{1op}$. It is monoidal/braided/sylleptic/symmetric monoidal if $\cV$ is, and therefore canonically takes comonoid objects to monoid objects.
\end{lem}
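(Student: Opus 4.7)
The plan is to construct $\Mod(-)$ level by level, then induce the symmetric monoidal structure from the universal property of the Deligne 2-tensor product together with the symmetric monoidal structure on $\TwoVec$, exactly mirroring how the monoidal structure on $\TwoSlice_\cV$ was built. First I would define the assignment on objects by $A \mapsto (\Mod(A), \mathbf{Forget}_A)$; that the forgetful functor is locally faithful is standard, so the image lies in $\TwoSlice$. On 1-morphisms, a monoidal functor $\phi \colon A \to B$ induces the restriction 2-functor $\phi^\ast \colon \Mod(B) \to \Mod(A)$, which commutes strictly with the forgetful functors; viewed in $\TwoSlice^{1\mathrm{op}}$, this is a 1-morphism from $\Mod(A)$ to $\Mod(B)$, paired with the identity natural equivalence. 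On 2-morphisms, a monoidal transformation $\phi \Rightarrow \phi'$ gives a transformation $(\phi')^\ast \Rightarrow \phi^\ast$ paired with the identity modification. Since the target $\TwoSlice$ is 2-truncated, the coherent 2-functoriality of this assignment reduces to strict functoriality of the restriction construction, which is immediate.

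Next I would install the monoidal structure. The tensorator is the equivalence $\Mod(A) \boxdot \Mod(B) \simeq \Mod(A \boxtimes B)$ furnished by the universal property theorem of $\boxdot$ recalled earlier, and this equivalence is natural in $A$ and $B$ by the same theorem; it intertwines the forgetful functors essentially by construction since $\mathbf{Forget}_{A\boxtimes B}$ corresponds, under the universal property, to $\otimes_\TwoVec \circ (\mathbf{Forget}_A \boxdot \mathbf{Forget}_B)$. The unitor is the canonical equivalence $\mathrm{id}_\TwoVec \simeq (\Mod(\bbK), \mathbf{Forget}_\bbK)$. The associator, pentagonator, and left/right 2-unitors of the monoidal 2-functor $\Mod(-)$ are then induced by whiskering the associator and pentagonator of $\TwoVec$ against the appropriate $\boxdot$-components, precisely as in the construction of the monoidal structure on $\TwoSlice_\cV$.

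The monoidal 2-functor axioms for $\Mod(-)$ then follow by the same truncation argument used in the construction of $\TwoSlice_\cV$: for any axiom to be verified, both sides reduce to modifications induced by composing $\boxdot$-naturality cells with the corresponding coherence modifications of $\TwoVec$, and these coincide in $\TwoVec$ by the monoidal 2-category axioms for $\TwoVec$; the truncation of $\TwoSlice$ then provides a (necessarily unique) 3-cell witnessing equality of the two modifications. Upgrading to braided/sylleptic/symmetric is carried out by inducing the requisite braiding/syllepsis/symmetry data from that of $\TwoVec$ via the universal property of $\boxdot$, with axioms verified by the same argument.

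The final sentence is then a formal consequence: any contravariant symmetric monoidal 2-functor sends comonoid objects to monoid objects, since the data defining a comonoid structure is pulled back under the tensorator and unitor into data defining a monoid structure in the target, and all axioms transport accordingly. Applied to $\Mod(-) \colon \TwoAlg \to \TwoSlice^{1\mathrm{op}}$, a comonoid in $\TwoAlg$ (i.e.\ a 2-bialgebra in $\TwoVec$) becomes an algebra in $\TwoSlice$. The main technical obstacle is organizing the coherence data for $\Mod(-)$ without drowning in pasting diagrams, but the 2-truncation of $\TwoSlice$ combined with the inducing-from-$\TwoVec$ strategy bypasses any explicit coherence check, so the work is really just assembling the universal-property-induced cells and invoking the truncation argument.
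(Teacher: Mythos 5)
Your overall strategy is the same as the paper's: treat the underlying 2-functor as routine, induce the tensorator and unitor from the universal property of $\boxdot$, kill all coherence axioms by combining the 2-truncation of $\TwoSlice$ with the fact that the relevant modifications are induced pointwise from $\TwoVec$, and deduce the comonoid-to-monoid statement formally from contravariant symmetric monoidality. The one place you diverge is the only place the paper does real work: the tensorator. You take the equivalence $\Mod(A)\boxdot\Mod(B)\simeq\Mod(A\boxtimes B)$ \emph{together with its naturality} as already furnished, but the paper's proof explicitly flags that this equivalence is known to exist ``just not naturally,'' and the substance of the proof is the construction of a natural comparison functor: since $\Mod(A)\boxdot\Mod(B)$ is locally separable, a functor out of it is induced from a bilinear functor on $\Mod(A)\times\Mod(B)$, which in turn (because $\Mod(A)$ and $\Mod(B)$ are Cauchy completions of the one-object 2-categories on $A$ and $B$) is determined by its values on the generators; choosing $-\boxtimes B$ and $A\boxtimes -$ yields a manifestly natural $\chi_{A,B}\colon \Mod(A)\boxdot\Mod(B)\to\Mod(A\boxtimes B)$, afterwards promoted to an adjoint equivalence in $\TwoSlice$, with the comparison of forgetful functors supplied by the 2-universal property. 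Quoting the naturality clause of the recalled theorem assumes essentially what this step is meant to establish, so you should either reproduce this Cauchy-completion argument or cite a source that provides a genuinely natural family of equivalences. Everything else in your proposal---the strict compatibility of restriction functors with the forgetful functors, the truncation argument for the monoidal 2-functor axioms, inducing the braided/sylleptic/symmetric data from $\TwoVec$, and the formal transport of comonoid structure---matches the paper.
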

\begin{proof}
The construction of the underlying functor is routine. We provide the tensorator here. We need a natural map $\Mod(C) \boxdot \Mod(D) \to \Mod(C \boxtimes D)$, commuting appropriately with the forgetful functor, up to invertible natural transformation.

We simply make explicit the conclusion of Corollaries 3.8, 3.9 of \cite{décoppet20212deligne}, along with the following discussion. We know that $\Mod(C) \boxdot \Mod(D) \simeq \Mod(C \boxtimes D)$ already, just not naturally. However as $\Mod(C) \boxdot \Mod(D)$ is locally separable (since it is equivalent to a separable 2-category), a functor $\Mod(C) \boxdot \Mod(D)$ can be induced by universal property from $\Mod(C) \times \Mod(D)$, and this in turn by a pair of functors from $\Mod(C)$ and $\Mod(D)$. These in turn are specified by their values on $C$ and $D$ (since $\Mod(C)$ and $\Mod(D)$ are Cauchy completions). We use the functors $- \boxtimes D$ and $C \boxtimes -$, respectively. The induced map $\chi_{C, D} \colon \Mod(C) \boxdot \Mod(D) \to \Mod(C \boxtimes D)$ is the tensorator, and is manifestly natural. The unitor can be induced similarly, and these functors equipped with the data of adjoint equivalences (in $\TwoSlice_\cV$).

The 2-universal property of the Deligne 2-tensor product induces the requisite natural transformation between forgetful functors. All the modifications required for a braided monoidal functor are induced by universal property (potentially using the braiding on $\cV$), and the modification equations hold pointwise in $\cV$.  
\end{proof}
\begin{lem} \label{2AlgebraAxiom1Verification}
	The morphisms constructed in \ref{2MonoidIn2Slice} satisfy the first algebra object axiom(\cite[20]{decoppet2023finite}), up to a necessarily unique 3-morphism, as a direct consequence of the first monoidal 2-functor axiom (\cite[17]{MR1261589}). 
\end{lem}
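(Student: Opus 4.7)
The plan is to demonstrate that the first algebra object axiom for the pair $\mu^{(\cC, \cF)} = (\alpha_\cC, \omega)$, when expanded as a pasting equation in $\TwoSlice$, decomposes componentwise into two assertions: a pure $\cC$-side equation witnessed by the pentagonator $\Pi_\cC$, and a $\TwoVec$-side equation that is literally the first monoidal 2-functor axiom (HTA1 of Gordon--Power--Street, equivalently the axiom displayed on page 17 of \cite{MR1261589}) for $\omega$. The whole proof rests on the key structural observation made in the construction of the monoidal structure on $\TwoSlice$ (and flagged in Remark \ref{InducedFromHigherMonoidalStructure}) that every coherence 1-, 2-, and 3-cell of $\TwoSlice$ is inherited componentwise from the corresponding cell of $\TwoVec$ together with a purely categorical piece coming from the Cartesian structure on $\sf{2Cat}$.

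First I would write out Decoppet's first algebra axiom (a) for a monoid in a monoidal 2-category, substituting $m = (\otimes_\cC, \chi)$ and $\mu^{(\cC, \cF)} = (\alpha_\cC, \omega)$ at every 1- and 2-cell, and using that horizontal composition in $\TwoSlice$ is the pair of horizontal compositions as in \eqref{2Slice2MorHorizontalComposition}. This yields a pasting diagram of 2-cells in $\TwoSlice$ whose source and target are two 2-morphisms in $\TwoSlice$, and hence pairs: a 2-morphism in $\cC$ and a natural 2-cell between functors into $\TwoVec$ coming from $\cF$.

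Second I would match components. On the $\cC$-side, after applying the universal property of $\boxdot$ to translate $\times$ into $\boxdot$ wherever needed, the two composite 2-cells are built entirely from whiskerings of $\alpha_\cC$ and are related by the non-abelian $4$-cocycle witnessed by $\Pi_\cC$; this is exactly the 3-morphism recorded in the witnessing-column of the table. On the $\TwoVec$-side, the two composite 2-cells are assembled from whiskerings of $\chi$, whiskerings of $\omega$ (which appear precisely where $\alpha_\cC$ has been transported through $\cF$), and coherence 2-cells of $\TwoVec$ induced from the Deligne $2$-tensor product. Because all $\TwoSlice$-coherence pieces are induced from $\TwoVec$-coherence pieces, these latter cells assemble exactly into the $\TwoVec$-pentagonator configuration appearing on one side of HTA1.

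The main obstacle will be keeping the bookkeeping of all the interchangers and associators straight when passing between the $\TwoSlice$-pasting diagram and the separate $\cC$- and $\TwoVec$-pasting diagrams, since several regions of the diagram represent whiskered naturality cells induced by the universal property of $\boxdot$ rather than by a direct assignment. I will manage this using the coherence theorem for $2$-categories \cite[Theorem 3.6.6]{johnson20202dimensional} as invoked in the conventions, so that all unitors and compositors may be suppressed. Once the two diagrams are identified, the fact that $\omega$ satisfies HTA1 by hypothesis yields the required 3-cell, and uniqueness is automatic: any two such 3-cells have equal $\TwoSlice$-image, which by the 2-truncation of $\TwoSlice$ (and local faithfulness of $\cF$, as in the argument giving 2-truncation) forces them to agree. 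The witnessing 3-morphism is therefore $\Pi_\cC$, as asserted in the table.
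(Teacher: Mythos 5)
Your proposal follows essentially the same route as the paper: expand both sides of D\'ecoppet's axiom (a) into explicit pasting diagrams in $\TwoVec$ via the horizontal-composition formula \eqref{2Slice2MorHorizontalComposition}, supply $\Pi_\cC$ as the underlying modification of the witnessing 3-cell on the $\cC$-component (the paper inserts its image $\cF(\Pi_\cC)$ into the region marked $\star\star$ and relocates $\Pi_{\TwoVec}$ to the region marked $\star$, since the two composite 2-cells are not parallel as written), and then recognize the resulting equation as exactly the first monoidal 2-functor axiom for $\omega$, with uniqueness from 2-truncation. This matches the paper's argument in both decomposition and key inputs.
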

\begin{proof}
	We use a minimal amount of parentheses while remaining unambiguous about the order of tensor products, mostly omitting them around arguments of functions; i.e $\cF a(bc) \cF d$ means $(\cF(a) \boxtimes (\cF(b) \boxtimes \cF(c))) \boxtimes \cF(d)$. We expand the first expression of axiom (a) into a pasting diagram in $\cV$ and obtain: 
	\[\hspace{-1cm}\begin{tikzcd}[column sep = small]
		&& {\cF((ab)c)d}\\
		& {\cF (ab)c\cF d} && {\cF(a(bc))d} & \\
		{(\cF ab \cF c) \cF d} && {\cF a(bc)\cF d} && {\cF a((bc)d)} \\
		{((\cF a\cF b) \cF c) \cF d} && {(\cF a \cF bc)\cF d} && {\cF a \cF((bc)d)} & {\cF a (b (c d))} \\
		& {(\cF a(\cF b \cF c))\cF d} && {\cF a (\cF bc\cF d)} && {\cF a \cF b (c d)} \\
		&\star& {\cF a ((\cF b \cF c) \cF d)} && {\cF a (\cF b \cF c d))} \\
		&&& {\cF a (\cF b (\cF c \cF d))}
		\arrow[""{name=0, anchor=center, inner sep=0}, "\chi1", from=4-3, to=3-3]
		\arrow["{(\cF\alpha)1}"', from=2-2, to=3-3]
		\arrow["\chi"', from=3-3, to=2-4]
		\arrow["{\cF(\alpha1)}", from=1-3, to=2-4]
		\arrow["\chi", from=2-2, to=1-3]
		\arrow["\alpha"', from=4-3, to=5-4]
		\arrow[from=5-4, to=4-5]
		\arrow[""{name=1, anchor=center, inner sep=0}, "\chi", from=4-5, to=3-5]
		\arrow["{(1\chi)1}"', from=5-2, to=4-3]
		\arrow["\alpha"', from=5-2, to=6-3]
		\arrow["\chi1", from=3-1, to=2-2]
		\arrow[""{name=2, anchor=center, inner sep=0}, "{(\chi 1)1}", from=4-1, to=3-1]
		\arrow["\alpha1"', from=4-1, to=5-2]
		\arrow["{\cF(1\alpha)}", from=3-5, to=4-6]
		\arrow["1F\alpha"', from=4-5, to=5-6]
		\arrow["\chi"', from=5-6, to=4-6]
		\arrow["1\alpha"', from=6-3, to=7-4]
		\arrow["{1(1\chi)}"', from=7-4, to=6-5]
		\arrow[""{name=3, anchor=center, inner sep=0}, "1\chi"', from=6-5, to=5-6]
		\arrow["{\cF(\alpha)}", from=2-4, to=3-5]
		\arrow["\cong"{description}, draw=none, from=2-2, to=2-4]
		\arrow[""{name=4, anchor=center, inner sep=0}, "{1(\chi1)}"', from=6-3, to=5-4]
		\arrow["{\underset{\alpha_\chi}{\Rightarrow}}"{description}, draw=none, from=5-2, to=5-4]
		\arrow["\cong"{description}, draw=none, from=4-5, to=4-6]
		\arrow["{\underset{\omega1}{\Rightarrow}}"{description}, draw=none, from=2, to=0]
		\arrow["{\underset{\omega1}{\Rightarrow}}"{description}, draw=none, from=0, to=1]
		\arrow["{\underset{1\omega}{\Rightarrow}}"', draw=none, from=4, to=3]
	\end{tikzcd},\]
	where we have designated some regions with stars. The areas marked with $\cong$ appear as coherences from the equation for horizontal composition. \eqref{2Slice2MorHorizontalComposition}.
	Repeating for the second expression, we acquire
	\[\begin{tikzcd}[column sep=small]
		&&& {\star \star} \\
		&& {\cF((ab)c)d} & {\cF(ab)(cd)} & {\cF a (b (c d))} \\
		& {\cF (ab)c\cF d} & {\cF ab\cF cd} && {\cF a \cF b (c d)} \\
		{(\cF ab \cF c) \cF d} & {\cF a b(\cF c \cF  d)} & {(\cF a \cF b)\cF cd} & {\cF a (\cF b \cF c d))} \\
		{((\cF a\cF b) \cF c) \cF d} & {(\cF a \cF b)(\cF c \cF  d)} & {\cF a (\cF b (\cF c \cF d))} \\
		{(\cF a(\cF b \cF c))\cF d} & {\cF a ((\cF b \cF c) \cF d)}
		\arrow[""{name=0, anchor=center, inner sep=0}, "\chi", from=3-2, to=2-3]
		\arrow["\alpha"', from=6-1, to=6-2]
		\arrow["\chi1", from=4-1, to=3-2]
		\arrow[""{name=1, anchor=center, inner sep=0}, "{(\chi 1)1}", from=5-1, to=4-1]
		\arrow[""{name=2, anchor=center, inner sep=0}, "\alpha1"', from=5-1, to=6-1]
		\arrow[""{name=3, anchor=center, inner sep=0}, "\chi"', from=3-5, to=2-5]
		\arrow[""{name=4, anchor=center, inner sep=0}, "1\alpha"', from=6-2, to=5-3]
		\arrow["1\chi"', from=4-4, to=3-5]
		\arrow[""{name=5, anchor=center, inner sep=0}, "{\chi(11)}"', from=5-2, to=4-2]
		\arrow[""{name=6, anchor=center, inner sep=0}, "11\chi"', from=5-2, to=4-3]
		\arrow["\alpha", from=5-1, to=5-2]
		\arrow["\alpha"', from=5-2, to=5-3]
		\arrow[""{name=7, anchor=center, inner sep=0}, "\chi1"', from=4-3, to=3-3]
		\arrow["1\chi", from=4-2, to=3-3]
		\arrow["\alpha", from=4-1, to=4-2]
		\arrow[""{name=8, anchor=center, inner sep=0}, swap, "\chi"', from=3-3, to=2-4]
		\arrow["{\cF(\alpha)}"', swap,from=2-4, to=2-5]
		\arrow["\alpha"', from=4-3, to=4-4]
		\arrow["{\underset{\phi}{\Rightarrow}}"{description}, draw=none, from=4-2, to=4-3]
		\arrow["{\cF(\alpha)}", from=2-3, to=2-4]
		\arrow[""{name=9, anchor=center, inner sep=0}, "{1(1\chi)}"', from=5-3, to=4-4]
		\arrow["{\underset{\omega}{\Rightarrow}}"{description}, draw=none, from=0, to=8]
		\arrow["{\underset{\omega}{\Rightarrow}}"{description}, draw=none, from=7, to=3]
		\arrow["{\underset{\alpha_\chi}{\Rightarrow}}"{description}, draw=none, from=1, to=5]
		\arrow["{\underset{\Pi_\TwoVec}{\Rightarrow}}"{description}, draw=none, from=2, to=4]
		\arrow["{\underset{\alpha_\chi}{\Rightarrow}}"{description}, draw=none, from=6, to=9]
	\end{tikzcd}\]
	
	These 2-morphisms are not even parallel! By the definition of 3-morphisms in $\TwoSlice$, we may insert the image of the pentagonator of $\cC$ into the region marked $\star\star$ in the second figure.  After moving the pentagonator of $\cV$ to the region marked $\star$ in the first diagram; we have the two expressions in the monoidal 2-functor axiom. 
\end{proof}
\begin{lem} \label{2Algebra1MorphismAxiom1Verification}
The morphisms constructed in \ref{2MonoidIn2Slice} satsify the first axiom for a 1-morphism of algebra objects(\cite[21]{decoppet2023finite}), up to a necessarily unique 3-morphism, as a direct consequence of the first monoidal natural transformation axiom (\cite[95]{schommer-pries-thesis}). 
\end{lem}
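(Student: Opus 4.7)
The plan is to follow the same recipe used in Lemma \ref{2AlgebraAxiom1Verification}: unfold D\'ecoppet's axiom (a) for a 1-morphism of algebra objects into a pair of pasting diagrams valued in $\TwoVec$, and match them against the two composites making up the monoidal natural transformation axiom MBTA1. The algebra 1-morphism structure on $(\cT, \tau)$ is $(\kappa_{(\cT,\tau)}, (\iota_\cT, M)) = ((\chi_\cT, \Pi^{-1}), (\iota_\cT, M))$, while the algebra structures on $(\cC, \cF)$ and $(\cC', \cF')$ are determined by the monoidal data $(\chi_\cF, \iota_\cF, \omega_\cF, \dots)$ and $(\chi_{\cF'}, \iota_{\cF'}, \omega_{\cF'}, \dots)$ via Proposition \ref{2MonoidIn2Slice}.

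First I would unpack, on both sides of axiom (a), each whiskering of $\kappa_{(\cT,\tau)}$ against the multiplication morphisms of source and target using the horizontal composition formula \eqref{2Slice2MorHorizontalComposition}. As in the previous lemma, this introduces extra regions filled by the naturator 2-cells $(\chi_{\cF'})_{\sigma}$ of the target tensorator; these regions are typed purely in $\TwoVec$ and combine with the underlying modification data of $\Pi$. What remains on each side is a pasting diagram in $\TwoVec$ whose boundary words are built from $\cF$, $\cF'\cT$, their tensorators, and instances of the associator $\alpha_\TwoVec$.

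Second, I would identify the two resulting pasting diagrams with the two composites appearing in MBTA1. The cells that are not already pieces of $\chi_\cT$, $\chi_\cF$, $\chi_{\cF'}$ or $\tau$ are pointwise instances of coherence data in $\TwoVec$ (unitors, associators, hexagonator), together with a single instance of the pentagonator $\Pi_{\cC'}$ sitting in the analog of the starred region from Lemma \ref{2AlgebraAxiom1Verification}. By 2-truncation of $\TwoSlice$, every 3-cell between parallel 2-cells in $\TwoVec$ that is required to rearrange these coherence regions exists and is uniquely determined, so these regions can be freely absorbed. After this absorption the diagrams are precisely the two sides of MBTA1 whiskered pointwise in $\cC$, which gives the desired 3-morphism.

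The main obstacle is bookkeeping rather than any conceptual difficulty: keeping the many whiskerings typed correctly, tracking which additional regions enter from \eqref{2Slice2MorHorizontalComposition}, and arranging them so that MBTA1 becomes literally visible. Because $\TwoSlice$ is 2-truncated the connecting 3-morphism is automatically unique once parallelism of the underlying 2-cells is verified, so no further coherence has to be checked by hand.
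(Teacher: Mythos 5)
Your strategy is the same as the paper's: unpack both sides of D\'ecoppet's axiom (a) into pasting diagrams in $\TwoVec$ via the horizontal composition formula \eqref{2Slice2MorHorizontalComposition} (which is what produces the extra naturality regions $\tau_\chi$, $\tau_\alpha$), then recognize the two resulting composites as the two sides of MBTA1 up to insertion of coherence cells, with uniqueness of the connecting 3-morphism supplied by 2-truncation. Two details come out differently when the diagrams are actually drawn, and you should correct them. First, the leftover starred region is \emph{not} a pentagonator: that was the situation in the 0-cell axiom of Lemma \ref{2AlgebraAxiom1Verification}, where the pentagonator of $\cC$ goes into one diagram and the pentagonator of $\TwoVec$ into the other. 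Here the two pasting diagrams differ from the two sides of MBTA1 by an \emph{identical} region involving the mate of the associator, inserted at the starred location in \emph{both} diagrams, so it contributes nothing to the comparison. Second, there is a direction mismatch you do not address: the monoidal transformation axiom is a 2-morphism from a composite of two 1-morphisms to a composite of three, whereas the algebra 1-morphism axiom runs the other way, so it is the \emph{inverses} of the pasted 2-morphisms that equal the two expressions in MBTA1. Neither point derails the argument, but both must be accounted for to make the identification with MBTA1 literal rather than approximate.
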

\begin{proof}
We use similar conventions as in the previous proof. The first pasting diagram is:
\[\begin{tikzcd}
	\star && {(\cF'\cT a \cF'\cT b) \cF c} \\
	&&& {(\cF'\cT a \cF'\cT b) \cF'\cT c} \\
	\\
	{(\cF'\cT a \cF b)\cF c} && {\cF'\cT ab\ \cF c} && {\cF'\cT ab \cF'\cT c} \\
	& {\cF ab\cF c } && {\cF'\cT a\cF'\cT bc} \\
	{(\cF a\cF b)\cF c} && {\cF (ab)c} \\
	&&&& {\cF'\cT (ab)c} \\
	{\cF a (\cF b \cF c)} && {\cF a(bc)} \\
	& {\cF a\cF bc} &&& {\cF'\cT a(bc)}
	\arrow[""{name=0, anchor=center, inner sep=0}, "\alpha", from=6-1, to=8-1]
	\arrow["1\chi", from=8-1, to=9-2]
	\arrow["\chi", from=9-2, to=8-3]
	\arrow["\tau", from=8-3, to=9-5]
	\arrow[""{name=1, anchor=center, inner sep=0}, "\cF\alpha"', from=6-3, to=8-3]
	\arrow[""{name=2, anchor=center, inner sep=0}, "{\cF'\cT \alpha}", from=7-5, to=9-5]
	\arrow["\tau"', from=6-3, to=7-5]
	\arrow["\chi1"', from=6-1, to=5-2]
	\arrow["\chi"', from=5-2, to=6-3]
	\arrow["{(\tau1)1}"', from=6-1, to=4-1]
	\arrow["{(1\tau)1}"', from=4-1, to=1-3]
	\arrow["\tau1"', from=5-2, to=4-3]
	\arrow["{\chi_{\cF'\cT}}", from=5-4, to=7-5]
	\arrow["1\tau", from=4-3, to=5-4]
	\arrow[""{name=3, anchor=center, inner sep=0}, "{\chi_{\cF'\cT}1}", from=1-3, to=4-3]
	\arrow["{(11)\tau}"', from=1-3, to=2-4]
	\arrow[""{name=4, anchor=center, inner sep=0}, "{\chi_{\cF'\cT}1}", from=2-4, to=4-5]
	\arrow["1\tau"', from=4-3, to=4-5]
	\arrow["{\underset{\Pi^{-1}1}{\Rightarrow}}"{description}, draw=none, from=4-1, to=4-3]
	\arrow["{\underset{\Pi^{-1}}{\Rightarrow}}"{description}, draw=none, from=5-2, to=5-4]
	\arrow["{\chi_{\cF'\cT}}", from=4-5, to=7-5]
	\arrow["{\underset{\phi_{\tau,\chi}}{\Rightarrow}}"{description}, draw=none, from=5-4, to=4-5]
	\arrow["{\underset{\omega}{\Rightarrow}}"{description}, draw=none, from=0, to=1]
	\arrow["{\underset{\tau_\chi}{\Rightarrow}}"{description}, draw=none, from=3, to=4]
	\arrow["{\underset{\tau_\alpha}{\Rightarrow}}"{description}, draw=none, from=1, to=2]
\end{tikzcd}\]
where both the regions involving naturality of $\tau$ come from \eqref{2Slice2MorHorizontalComposition}. The other pasting diagram is 

\[\vspace{-.5cm}\begin{tikzcd}
	{(\cF a\cF b)\cF c} \\
	{\cF a(\cF b\cF c)} && {(\cF'\cT a \cF b)\cF c} & {(\cF'\cT a \cF'\cT b)\cF c} & \star \\
	{\cF a \cF bc} && {\cF'\cT a (\cF b\cF c)} & {\cF'\cT a (\cF'\cT b\cF c)} & {(\cF'\cT a \cF'\cT b)\cF'\cT c} \\
	&& {\cF'\cT a \cF bc} & {\cF'\cT a (\cF'\cT b\cF'\cT c)} \\
	{\cF a(bc)} &&&& {(\cF'\cT ab)\cF'\cT c} \\
	&& {\cF'\cT a \cF'\cT bc} \\
	{\cF'\cT a(bc)} &&& {\cF'\cT (ab)c}
	\arrow["\alpha", from=1-1, to=2-1]
	\arrow["1\chi", from=2-1, to=3-1]
	\arrow["\chi", from=3-1, to=5-1]
	\arrow["\tau", from=5-1, to=7-1]
	\arrow["\tau1", from=3-1, to=4-3]
	\arrow[""{name=0, anchor=center, inner sep=0}, "1\tau", from=4-3, to=6-3]
	\arrow["{\chi_{\cF'\cT}}"', from=6-3, to=7-1]
	\arrow["1\chi", from=3-3, to=4-3]
	\arrow["{\tau(11)}", from=2-1, to=3-3]
	\arrow["{(\tau1)1}", from=1-1, to=2-3]
	\arrow[""{name=1, anchor=center, inner sep=0}, "\alpha", from=2-3, to=3-3]
	\arrow["{(1\tau)1}", from=2-3, to=2-4]
	\arrow["{1(\tau1)}", from=3-3, to=3-4]
	\arrow["{(11)\tau}", from=2-4, to=3-5]
	\arrow[""{name=2, anchor=center, inner sep=0}, "\alpha", from=2-4, to=3-4]
	\arrow["{1(1\tau)}", from=3-4, to=4-4]
	\arrow[""{name=3, anchor=center, inner sep=0}, "{\chi_{F'T}}", from=4-4, to=6-3]
	\arrow["\alpha", from=3-5, to=4-4]
	\arrow["{\chi_{\cF'\cT}1}", from=3-5, to=5-5]
	\arrow["{\chi_{\cF'\cT}}"{description}, from=5-5, to=7-4]
	\arrow["{\cF'\cT(\alpha)}"', from=7-4, to=7-1]
	\arrow["{\underset{\phi_{\chi, \tau}}{\Rightarrow}}"{description}, draw=none, from=3-1, to=3-3]
	\arrow[draw=none, from=2-1, to=2-3]
	\arrow["{\underset{\alpha_{\tau11}}{\Rightarrow}}"{description}, draw=none, from=2-1, to=2-3]
	\arrow["{\underset{\alpha_{11\tau}}{\Rightarrow}}"{description}, draw=none, from=3-4, to=3-5]
	\arrow["{\underset{1\Pi^{-1}}{\Rightarrow}}"{description}, draw=none, from=4-3, to=4-4]
	\arrow["{\underset{\Pi^{-1}}{\Rightarrow}}"{description}, draw=none, from=5-1, to=0]
	\arrow["{\underset{\alpha_{1\tau1}}{\Rightarrow}}"{description}, draw=none, from=1, to=2]
	\arrow["{\underset{\omega_{\cF'\cT}}{\Rightarrow}}"{description}, draw=none, from=3, to=5-5]
\end{tikzcd}\]

The inverse of the 2-morphisms associated to these pasting diagrams are precisely the two expressions in the monoidal transformation axiom, up to the insertion of an identical region involving the mate of the associator in both diagrams, at the locations marked with $\star$.  The inverse appears here as the monoidal transformation axiom requires a 2-morphism from a composite of two 1-morphisms to a composite of three, but the 1-cell axiom is the other way around.
\end{proof}

\subsection{The Modification Axiom}
Here we concern ourselves with the two axioms that must be proven about the morphisms $\Omega(\eta)$. In this section we have $f \colon b \to c \in \cC$, $\eta, \eta' \in \End(\cF)$ and $\Theta \colon \eta \to \eta'$.  
We will use the following labels for invertible 2-morphisms to save space. 
\definecolor{NewYellow}{HTML}{800020}
\[\begin{tabular}{|c|c|}
	\hline 
Symbol & Meaning \\
	\hline
	$\diamondsuit$ & The 2-morphism \eqref{CuspTransferEquation} moving $f$ around a cap (or cup).  \\
	\hline 
	$\phi$ & Interchangers  \\
	\hline 
	$\clubsuit$ & The 2-morphism \eqref{EvaluationReplacement} $ \ev_{\cF(-)}(1\delta) \Rightarrow \cF(\ev_-)$ \\
	\hline 
	$\spadesuit$ & The Sweedler isomorphism \eqref{2SweedlerNotation}   \\
	\hline 
\end{tabular}\]
No distinction is made between a 2-morphism and its inverse. In all cases, the precise location and direction a 2-morphism was applied will be clear from the source and target, and in most instances there is only one option. In addition, no confusion should arise from symbols which have multiple associated types of 2-morphisms, as only one will be possible at a time. 
In some cases multiple arrows will be used at once, and vertically or horizontally stacked. The symbols: 
\[\Centerstack{ $\phi$ \\ $\Longrightarrow$ \\ + \\ $\Longrightarrow$ \\ $\spadesuit$} \quad \quad \text{and} \quad \quad \phi{\Big\Downarrow}   \color{black}{~~+~~} \Big\Downarrow \spadesuit\] 
both mean that an interchanger was applied first, then the Sweedler isomorphism.
\begin{prop}\label{2AntipodeAxiomVerification}
	The morphism \eqref{OmegaDefinition} satisfies the modification axiom. 
\end{prop}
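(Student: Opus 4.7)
The strategy is to verify the modification axiom by showing that $\Omega_\eta$, viewed as a family of 2-cells indexed by $X \in \cC$, is compatible with the naturators of its source and target 2-natural transformations along any $f \colon X \to Y$ in $\cC$. Since $\Omega_X$ is built as a vertical composite of instances of $\phi$, $\spadesuit$, $\clubsuit$, $\diamondsuit$, a naturality-of-$\eta$ step, and a final cusp move, each of which is natural in its 1-morphism inputs, the composite $\Omega$ should be compatible with $\cF f$ by stacking these individual naturality squares.

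Concretely, I would draw two parallel columns. The left column applies $\Omega_X$ and then the naturator of $\iota \epsilon(\eta)$ along $f$, which is essentially $\cF f$ acting on the identity strand. The right column applies the naturator of $\mu(1 \otimes \cS)\Delta(\eta)$ along $f$, whose expansion is read off from the Sweedler decomposition of $\Delta(\eta)$ together with the naturator \eqref{2SnakeNaturator} for $\cS(\eta_{(2)})$, and then applies $\Omega_Y$. Between the columns I build a grid of squares, one per elementary move in \eqref{OmegaDefinition}. The $\phi$ rows commute by interchanger naturality; the $\spadesuit$ rows commute because the Sweedler decomposition is natural, so $\Delta$ is compatible with $\cF f$ on each strand; the $\clubsuit$ and $\diamondsuit$ rows commute by naturality of the cusps and dual functors on $\TwoVec$; and the naturality-of-$\eta$ row commutes by applying the naturator of $\eta$ itself against $f$ in place of the previously chosen morphism.

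The main obstacle is bookkeeping: once the Sweedler step splits a single strand into $\eta_{(1)}, \eta_{(2)}$, these strands are rearranged by subsequent cusp and interchanger moves, and the naturator \eqref{2SnakeNaturator} of $\cS(\eta_{(2)})$ is itself a long composite of the same elementary 2-morphisms used to build $\Omega$. The key identification is that pushing $f$ past a cup inside $\Omega$ is exactly the data produced by \eqref{2SnakeNaturator} applied to $\cS(\eta_{(2)})$, while pushing $f$ past $\spadesuit$ recovers the functoriality of $\Delta$. With these pairings in place the grid closes and $\Omega$ satisfies the modification axiom; the proof requires no new structural input beyond naturality of each named 2-morphism, and is long but mechanical.
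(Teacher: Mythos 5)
Your proposal is correct and follows essentially the same route as the paper: the proof there is precisely a grid of commuting squares pairing each elementary step of $\Omega$ (the $\phi$, $\spadesuit$, $\clubsuit$, $\diamondsuit$, naturality-of-$\eta$, and cusp moves) against the corresponding step of the naturator of $\mu \circ (1 \boxtimes \cS) \circ \Delta(\eta)$, with each region closed by naturality of the named 2-morphism, locality, or the defining equations of $\clubsuit$ and $\diamondsuit$. You have also correctly identified the main bookkeeping point, namely that the naturator \eqref{2SnakeNaturator} of $\cS(\eta_{(2)})$ supplies exactly the cells needed to push $f$ past the cups and caps inside $\Omega$.
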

\renewcommand{\roundNbox}[6]{
	\draw[rounded corners=5pt, very thick, #1] ($#2+(-#3,-#3)+(-#4,0)$) rectangle ($#2+(#3,#3)+(#5,0)$);
	\coordinate (ZZa) at ($#2+(-#4,0)$);
	\coordinate (ZZb) at ($#2+(#5,0)$);
	\node[label={[yshift=-0.5cm]#6}] at ($1/2*(ZZa)+1/2*(ZZb)$) {};
}
\begin{proof} We simply make rigorous the statement that ``each step of $\Omega$ is natural''.
	We write $\cQ(\otimes, J)(\eta) = \cQ_J(\eta)$ and likewise $f\otimes 1 = f1$. Observe the following diagram: 
	\[\hspace{-1.25cm}\begin{tikzpicture}[baseline= (a).base] \node[scale=.8] (a) at (0,0){
			\begin{tikzcd}
				\tikzmath{
					\draw[thick] (0,-2) -- (0, 2.5)  arc (180:0:.5cm) -- (1,.5)  arc (-180:0:.5cm) -- (2, 4);
					\roundNbox{fill=white}{(1,1)}{.3}{.05}{.05}{${\eta_{(2)}}$};
					\roundNbox{fill=white}{(0,-.5)}{.3}{.05}{.05}{${\eta_{(1)}}$};
					\roundNbox{fill=white}{(0,-1.5)}{.3}{.05}{.05}{${\cF f}$};
					\filldraw[black] (1,1.5) circle (1.5pt);
					\filldraw[black] (1,.5) circle (1.5pt);
				}
				\ar[r ,thick, Rightarrow, shift left = 1em, "(\eta_{(1)})_f"]
			  	\ar[r, "{+}"{description}, draw=none]
				\ar[r,thick, Rightarrow, shift right = 1em, swap, "\phi"]
				\ar[d,thick, Rightarrow, shift left = 1em, "\spadesuit"]
				\ar[d, "{+}"{description}, draw=none]
				\ar[d,thick, Rightarrow, shift right = 1em, swap, "\phi"]
				& 
					\tikzmath{
					\draw[thick] (0,-2) -- (0, 2.5)  arc (180:0:.5cm) -- (1,.5)  arc (-180:0:.5cm) -- (2, 4);
					\roundNbox{fill=white}{(1,1)}{.3}{.05}{.05}{${\eta_{(2)}}$};
					\roundNbox{fill=white}{(0,-.5)}{.3}{.05}{.05}{${\eta_{(1)}}$};
					\roundNbox{fill=white}{(0,1.8)}{.3}{.05}{.05}{${\cF f}$};
					\filldraw[black] (1,2.5) circle (1.5pt);
					\filldraw[black] (1,.5) circle (1.5pt);
				}
\ar[r, thick, Rightarrow, shift left = 3em, "\phi" ]
\ar[r, thick, Rightarrow,  "\diamondsuit"]
\ar[r,thick, Rightarrow, shift right = 3em, "(\delta^{-1})_f" ]
\ar[r, "{+}"{description}, shift left = 2em, draw=none]
\ar[r, "{+}"{description},  shift right = 1em, draw=none]
				\ar[d,thick, Rightarrow, shift left = 1em, "\spadesuit"]
\ar[d, "{+}"{description}, draw=none]
\ar[d,thick, Rightarrow, shift right = 1em, swap, "\phi"]
				& 
				\tikzmath{
					\draw[thick] (0,-2) -- (0, 2.5)  arc (180:0:.5cm) -- (1,.5)  arc (-180:0:.5cm) -- (2, 4);
					\roundNbox{fill=white}{(1,1)}{.3}{.05}{.05}{${\eta_{(2)}}$};
					\roundNbox{fill=white}{(0,-.5)}{.3}{.05}{.05}{${\eta_{(1)}}$};
					\roundNbox{fill=white}{(1,1.8)}{.3}{.2}{.2}{${\cF (^*f)}$};
					\filldraw[black] (1,2.5) circle (1.5pt);
					\filldraw[black] (1,.5) circle (1.5pt);
				}
\ar[r, thick, Rightarrow, shift left = 1em,	"(\eta_{(2)})_f"]
\ar[r, thick, Rightarrow, shift right = 2em, "\delta_f"]
\ar[r, "{+}"{description}, draw=none]
				\ar[d,thick, Rightarrow, shift left = 1em, "\spadesuit"]
\ar[d, "{+}"{description}, draw=none]
\ar[d,thick, Rightarrow, shift right = 1em, swap, "\phi"]
			& 
			\tikzmath{
				\draw[thick] (0,-2) -- (0, 2.5)  arc (180:0:.5cm) -- (1,.5)  arc (-180:0:.5cm) -- (2, 4);
				\roundNbox{fill=white}{(1,2)}{.3}{.05}{.05}{${\eta_{(2)}}$};
				\roundNbox{fill=white}{(0,-.5)}{.3}{.05}{.05}{${\eta_{(1)}}$};
				\roundNbox{fill=white}{(1,1)}{.3}{.2}{.2}{${^*\cF f}$};
				\filldraw[black] (1,2.5) circle (1.5pt);
				\filldraw[black] (1,1.5) circle (1.5pt);
			}
				\ar[r, "\diamondsuit", Rightarrow]
				\ar[d,thick, Rightarrow, shift left = 1em, "\spadesuit"]
\ar[d, "{+}"{description}, draw=none]
\ar[d,thick, Rightarrow, shift right = 1em, swap, "\phi"]
		& 
		\tikzmath{
			\draw[thick] (0,-2) -- (0, 2.5)  arc (180:0:.5cm) -- (1,.5)  arc (-180:0:.5cm) -- (2, 4);
			\roundNbox{fill=white}{(1,2)}{.3}{.05}{.05}{${\eta_{(2)}}$};
			\roundNbox{fill=white}{(0,-.5)}{.3}{.05}{.05}{${\eta_{(1)}}$};
			\roundNbox{fill=white}{(2,1)}{.3}{.05}{.05}{${\cF f}$};
			\filldraw[black] (1,2.5) circle (1.5pt);
			\filldraw[black] (1,1.5) circle (1.5pt);
		}
\ar[r, "\phi", Rightarrow]
\ar[d,thick, Rightarrow, shift left = 1em, "\spadesuit"]
\ar[d, "{+}"{description}, draw=none]
\ar[d,thick, Rightarrow, shift right = 1em, swap, "\phi"]
	& 
	\tikzmath{
		\draw[thick] (0,-2) -- (0, 2.5)  arc (180:0:.5cm) -- (1,.5)  arc (-180:0:.5cm) -- (2, 4);
		\roundNbox{fill=white}{(1,2)}{.3}{.05}{.05}{${\eta_{(2)}}$};
		\roundNbox{fill=white}{(0,-.5)}{.3}{.05}{.05}{${\eta_{(1)}}$};
		\roundNbox{fill=white}{(2,3.5)}{.3}{.05}{.05}{${\cF f}$};
		\filldraw[black] (1,2.5) circle (1.5pt);
		\filldraw[black] (1,1.5) circle (1.5pt);
	} 
				\ar[d,thick, Rightarrow, shift left = 1em, "\spadesuit"]
\ar[d, "{+}"{description}, draw=none]
\ar[d,thick, Rightarrow, shift right = 1em, swap, "\phi"]
\\
\tikzmath{
	\draw[thick] (0,-1) -- (0, 2.5)  arc (180:0:.5cm) -- (1,.5)  arc (-180:0:.5cm) -- (2, 4);
	\roundNbox{fill=white}{(.5,1.1)}{.4}{.5}{.5}{${\cQ_J(\eta)}$};
	\roundNbox{fill=white}{(0,-.5)}{.3}{.05}{.05}{${\cF f}$};
	\filldraw[black] (1,1.7) circle (1.5pt);
	\filldraw[black] (1,.5) circle (1.5pt);
}
\ar[r, "\phi" ,thick, Rightarrow, shift left = 1em  ]
\ar[r, thick, Rightarrow,  shift right =2em, "{\cQ_J(\eta)_{f  1}}"]
\ar[r, "{+}"{description}, draw=none]
\ar[d, "\clubsuit", swap ,thick, Rightarrow]
& 
\tikzmath{
	\draw[thick] (0,-1) -- (0, 2.5)  arc (180:0:.5cm) -- (1,.5)  arc (-180:0:.5cm) -- (2, 4);
	\roundNbox{fill=white}{(.5,1.1)}{.4}{.5}{.5}{${{\cQ_J(\eta)}}$};
	\roundNbox{fill=white}{(0,1.9)}{.3}{.05}{.05}{${\cF f}$};
	\filldraw[black] (1,2.5) circle (1.5pt);
	\filldraw[black] (1,.5) circle (1.5pt);
}
\ar[r, "\phi" ,thick, Rightarrow, shift left = 3em,  ]
\ar[r, thick, Rightarrow, "\diamondsuit"]
\ar[r, thick, Rightarrow, shift right = 3em, "(\delta^{-1})_f" ]
\ar[r, "{+}"{description}, shift left = 2em, draw=none]
\ar[r, "{+}"{description},  shift right = 1em, draw=none]
\ar[d, "\clubsuit", swap ,thick, Rightarrow]
& 
\tikzmath{
	\draw[thick] (0,-1) -- (0, 2.5)  arc (180:0:.5cm) -- (1,.5)  arc (-180:0:.5cm) -- (2, 4);
	\roundNbox{fill=white}{(.5,1.1)}{.4}{.5}{.5}{${{\cQ_J(\eta)}}$};
				\roundNbox{fill=white}{(1,1.9)}{.3}{.2}{.2}{${\cF (^*f)}$};
\filldraw[black] (1,2.5) circle (1.5pt);
\filldraw[black] (1,.5) circle (1.5pt);
}
\ar[r, thick, Rightarrow, shift left = 1em,	"{\cQ_J(\eta)}_{1  ^*f}"]
\ar[r, thick, Rightarrow, shift right = 2em, "\delta_f"]
\ar[r, "{+}"{description}, draw=none]
\ar[d, "\clubsuit", swap ,thick, Rightarrow]
& 
\tikzmath{
	\draw[thick] (0,-1) -- (0, 2.5)  arc (180:0:.5cm) -- (1,.5)  arc (-180:0:.5cm) -- (2, 4);
	\roundNbox{fill=white}{(.5,1.9)}{.4}{.5}{.5}{${{\cQ_J(\eta)}}$};
	\roundNbox{fill=white}{(1,.8)}{.3}{.2}{.2}{${^*\cF f}$};
	\filldraw[black] (1,2.5) circle (1.5pt);
	\filldraw[black] (1,1.3) circle (1.5pt);
}
\ar[r, "\diamondsuit", Rightarrow]
\ar[d, "\clubsuit", swap ,thick, Rightarrow]
& 
\tikzmath{
	\draw[thick] (0,-1) -- (0, 2.5)  arc (180:0:.5cm) -- (1,.5)  arc (-180:0:.5cm) -- (2, 4);
	\roundNbox{fill=white}{(.5,1.9)}{.4}{.5}{.5}{${{\cQ_J(\eta)}}$};
\filldraw[black] (1,2.5) circle (1.5pt);
\filldraw[black] (1,1.3) circle (1.5pt);
	\roundNbox{fill=white}{(2,.8)}{.3}{.05}{.05}{${\cF f}$};
}
\ar[r, "\phi", Rightarrow]
\ar[d, "\clubsuit", swap ,thick, Rightarrow]
& 
\tikzmath{
	\draw[thick] (0,-1) -- (0, 2.5)  arc (180:0:.5cm) -- (1,.5)  arc (-180:0:.5cm) -- (2, 4);
	\roundNbox{fill=white}{(.5,1.9)}{.4}{.5}{.5}{${{\cQ_J(\eta)}}$};
\filldraw[black] (1,2.5) circle (1.5pt);
\filldraw[black] (1,1.3) circle (1.5pt);
	\roundNbox{fill=white}{(2,3.5)}{.3}{.05}{.05}{${\cF f}$};
} 
\ar[d, "\clubsuit", swap ,thick, Rightarrow]
\\
\tikzmath{
	\draw[thick] (0,-1) -- (0, 1);
	\draw[thick] (1, 1)-- (1,.5)  arc (-180:0:.5cm) -- (2, 4.5);
	\draw[thick] (.3, 1) -- (.3,3);
	\draw[thick] (.7, 1) -- (.7,3);
	\roundNbox{fill=white}{(.5,2.25)}{.3}{.5}{.5}{${\eta}$};
	\roundNbox{fill=white}{(.5,3.25)}{.4}{.5}{.5}{${\cF(\ev)}$};
	\roundNbox{fill=white}{(.5,1.1)}{.3}{.5}{.5}{${J}$};
	\roundNbox{fill=white}{(0,-.5)}{.3}{.05}{.05}{${\cF f}$};
	\filldraw[black] (1,.5) circle (1.5pt);
}
\ar[r, "\phi" ,thick, Rightarrow, shift left = 3em,  ]
\ar[r, thick, Rightarrow, "{\cQ_J(\eta)}_{f  1}"]
\ar[r, thick, Rightarrow, shift right = 3em, "(J^{-1})_{f1}" ]
\ar[r, "{+}"{description}, shift left = 2em, draw=none]
\ar[r, "{+}"{description},  shift right = 1em, draw=none]
\ar[d, thick, Rightarrow, "\eta_{\ev}"]
& 
\tikzmath{
\draw[thick] (0,-1) -- (0, 2.5);
\draw[thick] (1, 2.5)-- (1,.5)  arc (-180:0:.5cm) -- (2, 4.5);
\draw[thick] (.3, 2.7) -- (.3,3.5);
\draw[thick] (.7, 2.7) -- (.7,3.5);
\roundNbox{fill=white}{(.5,3.5)}{.4}{.5}{.5}{${\cF(\ev)}$};
\roundNbox{fill=white}{(.5,2.65)}{.3}{.5}{.5}{${J}$};
	\roundNbox{fill=white}{(.5,1.1)}{.4}{.5}{.5}{${{\cQ_J(\eta)}}$};
	\roundNbox{fill=white}{(0,1.9)}{.3}{.05}{.05}{${\cF f}$};
	\filldraw[black] (1,.5) circle (1.5pt);
}
\ar[r,thick, Rightarrow,  shift right =2em, "\diamondsuit_\star"]
\ar[d, thick, Rightarrow, swap, shift right = 1em,  "({\cQ_J(\eta)}_{f  1})^{-1}"]
\ar[d, "{+}"{description}, draw=none]
\ar[d, thick, Rightarrow, shift left = 1em,  "\eta_{\ev}"]
& 
\tikzmath{
\draw[thick] (0,-1) -- (0, 2.5);
\draw[thick] (1, 2.5)-- (1,.5)  arc (-180:0:.5cm) -- (2, 4.5);
\draw[thick] (.3, 2.7) -- (.3,3.5);
\draw[thick] (.7, 2.7) -- (.7,3.5);
\roundNbox{fill=white}{(.5,3.5)}{.4}{.5}{.5}{${\cF(\ev)}$};
\roundNbox{fill=white}{(.5,2.65)}{.3}{.5}{.5}{${J}$};
	\roundNbox{fill=white}{(.5,1.1)}{.4}{.5}{.5}{${{\cQ_J(\eta)}}$};
	\roundNbox{fill=white}{(1,1.9)}{.3}{.2}{.2}{${\cF (^*f)}$};
	\filldraw[black] (1,.5) circle (1.5pt);
}
\ar[r, thick, Rightarrow, shift left = 1em,	"{\cQ_J(\eta)}_{1  ^*f}"]
\ar[r, thick, Rightarrow, shift right = 2em, "\delta_f"]
\ar[r, "{+}"{description}, draw=none]
\ar[d, thick, Rightarrow, swap, shift right = 1em,  "{\cQ_J(\eta)}_{1  ^*f}"]
\ar[d, "{+}"{description}, draw=none]
\ar[d, thick, Rightarrow, shift left = 1em,  "\eta_{\ev}"]
& 
\tikzmath{
	\draw[thick] (0,-1) -- (0, 2);
	\draw[thick] (1, 2)-- (1,.5)  arc (-180:0:.5cm) -- (2, 4.5);
	\draw[thick] (.3, 1.75) -- (.3,3.5);
	\draw[thick] (.7, 1.75) -- (.7,3.5);
	\roundNbox{fill=white}{(.5,2.75)}{.3}{.5}{.5}{${\eta}$};
	\roundNbox{fill=white}{(.5,3.6)}{.4}{.5}{.5}{${\cF(\ev)}$};
	\roundNbox{fill=white}{(.5,2)}{.3}{.5}{.5}{${J}$};
	\filldraw[black] (1,1.5) circle (1.5pt);
	\roundNbox{fill=white}{(1,1)}{.3}{.2}{.2}{${^*\cF f}$};
} 
\ar[r, "\diamondsuit", Rightarrow]
\ar[d, thick, Rightarrow, "\eta_{\ev}"]
& 
\tikzmath{
	\draw[thick] (0,-1) -- (0, 2);
	\draw[thick] (1, 2)-- (1,.5)  arc (-180:0:.5cm) -- (2, 4.5);
	\draw[thick] (.3, 1.75) -- (.3,3.5);
	\draw[thick] (.7, 1.75) -- (.7,3.5);
	\roundNbox{fill=white}{(.5,2.75)}{.3}{.5}{.5}{${\eta}$};
	\roundNbox{fill=white}{(.5,3.6)}{.4}{.5}{.5}{${\cF(\ev)}$};
	\roundNbox{fill=white}{(.5,2)}{.3}{.5}{.5}{${J}$};
	\filldraw[black] (1,1.5) circle (1.5pt);
	\roundNbox{fill=white}{(2,1)}{.3}{.05}{.05}{${\cF f}$};
} 
\ar[r, "\phi", Rightarrow]
\ar[d, thick, Rightarrow, "\eta_{\ev}"]
& 
\tikzmath{
	\draw[thick] (0,-1) -- (0, 1);
	\draw[thick] (1, 1)-- (1,.5)  arc (-180:0:.5cm) -- (2, 4.5);
	\draw[thick] (.3, .75) -- (.3,2.5);
	\draw[thick] (.7, .75) -- (.7,2.5);
	\roundNbox{fill=white}{(.5,1.75)}{.3}{.5}{.5}{${\eta}$};
	\roundNbox{fill=white}{(.5,2.6)}{.4}{.5}{.5}{${\cF(\ev)}$};
	\roundNbox{fill=white}{(.5,1)}{.3}{.5}{.5}{${J}$};
	\filldraw[black] (1,.5) circle (1.5pt);
	\roundNbox{fill=white}{(2,3.5)}{.3}{.05}{.05}{${\cF f}$};
} 
\ar[d, thick, Rightarrow, "\eta_{\ev}"]
\\
\tikzmath{
	\draw[thick] (0,-1) -- (0, 1);
	\draw[thick] (1, 1)-- (1,.5)  arc (-180:0:.5cm) -- (2, 4.5);
	\draw[thick] (.3, 1) -- (.3,2);
	\draw[thick] (.7, 1) -- (.7,2);
	\roundNbox{fill=white}{(.5,3.25)}{.3}{0}{0}{${\eta}$};
	\roundNbox{fill=white}{(.5,2.25)}{.4}{.5}{.5}{${\cF(\ev)}$};
	\roundNbox{fill=white}{(.5,1.1)}{.3}{.5}{.5}{${J}$};
	\roundNbox{fill=white}{(0,-.5)}{.3}{.05}{.05}{${\cF f}$};
	\filldraw[black] (1,.5) circle (1.5pt);
}
\ar[r, "\phi" ,thick, Rightarrow]
\ar[d, "\clubsuit", swap ,thick, Rightarrow]
& 
\tikzmath{
	\draw[thick] (0,-1) -- (0, 2);
	\draw[thick] (1, 2)-- (1,.5)  arc (-180:0:.5cm) -- (2, 4.5);
	\draw[thick] (.3, 1.75) -- (.3,2.8);
	\draw[thick] (.7, 1.75) -- (.7,2.8);
	\roundNbox{fill=white}{(.5,3.75)}{.3}{0}{0}{${\eta}$};
	\roundNbox{fill=white}{(.5,2.9)}{.4}{.5}{.5}{${\cF(\ev)}$};
	\roundNbox{fill=white}{(.5,2)}{.3}{.5}{.5}{${J}$};
	\filldraw[black] (1,.5) circle (1.5pt);
	\roundNbox{fill=white}{(0,1.1)}{.3}{.2}{.2}{${(\cF f)}$};
} 
\ar[r, thick, Rightarrow,  "\diamondsuit_\star"]
\ar[d, "\phi" ,thick, Rightarrow, shift left = 1em]
\ar[d, "{+}"{description}, draw=none]
\ar[d, "\clubsuit", swap,thick, Rightarrow, shift right = 1em]
& 
\tikzmath{
	\draw[thick] (0,-1) -- (0, 2);
	\draw[thick] (1, 2)-- (1,.5)  arc (-180:0:.5cm) -- (2, 4.5);
	\draw[thick] (.3, 1.75) -- (.3,2.8);
	\draw[thick] (.7, 1.75) -- (.7,2.8);
	\roundNbox{fill=white}{(.5,3.75)}{.3}{0}{0}{${\eta}$};
	\roundNbox{fill=white}{(.5,2.9)}{.4}{.5}{.5}{${\cF(\ev)}$};
	\roundNbox{fill=white}{(.5,2)}{.3}{.5}{.5}{${J}$};
	\filldraw[black] (1,.5) circle (1.5pt);
	\roundNbox{fill=white}{(1,1.1)}{.3}{.2}{.2}{${\cF (^*f)}$};
} 
\ar[r, thick, Rightarrow, "\delta_f"]
\ar[d, "\clubsuit", swap ,thick, Rightarrow]
& 
\tikzmath{
	\draw[thick] (0,-1) -- (0, 2);
	\draw[thick] (1, 2)-- (1,.5)  arc (-180:0:.5cm) -- (2, 4.5);
	\draw[thick] (.3, 1.75) -- (.3,2.8);
\draw[thick] (.7, 1.75) -- (.7,2.8);
\roundNbox{fill=white}{(.5,3.75)}{.3}{0}{0}{${\eta}$};
\roundNbox{fill=white}{(.5,2.9)}{.4}{.5}{.5}{${\cF(\ev)}$};
	\roundNbox{fill=white}{(.5,2)}{.3}{.5}{.5}{${J}$};
	\filldraw[black] (1,1.5) circle (1.5pt);
	\roundNbox{fill=white}{(1,1)}{.3}{.2}{.2}{${^*\cF f}$};
} 
\ar[r, "\diamondsuit", Rightarrow]
\ar[d, "\clubsuit", swap ,thick, Rightarrow]
& 
\tikzmath{
	\draw[thick] (0,-1) -- (0, 2);
	\draw[thick] (1, 2)-- (1,.5)  arc (-180:0:.5cm) -- (2, 4.5);
	\draw[thick] (.3, 1.75) -- (.3,2.8);
	\draw[thick] (.7, 1.75) -- (.7,2.8);
	\roundNbox{fill=white}{(.5,3.75)}{.3}{0}{0}{${\eta}$};
	\roundNbox{fill=white}{(.5,2.9)}{.4}{.5}{.5}{${\cF(\ev)}$};
	\roundNbox{fill=white}{(.5,2)}{.3}{.5}{.5}{${J}$};
	\filldraw[black] (1,1.5) circle (1.5pt);
	\roundNbox{fill=white}{(2,1)}{.3}{.05}{.05}{${\cF f}$};
} 
\ar[r, "\phi", Rightarrow]
\ar[d, "\clubsuit", swap ,thick, Rightarrow]
& 
\tikzmath{
	\draw[thick] (0,-1) -- (0, 1);
\draw[thick] (1, 1)-- (1,.5)  arc (-180:0:.5cm) -- (2, 4.5);
\draw[thick] (.3, .75) -- (.3,1.75);
\draw[thick] (.7, .75) -- (.7,1.75);
\roundNbox{fill=white}{(.5,2.75)}{.3}{0}{0}{${\eta}$};
\roundNbox{fill=white}{(.5,1.9)}{.4}{.5}{.5}{${\cF(\ev)}$};
	\roundNbox{fill=white}{(.5,1)}{.3}{.5}{.5}{${J}$};
	\filldraw[black] (1,.5) circle (1.5pt);
	\roundNbox{fill=white}{(2,3.5)}{.3}{.05}{.05}{${\cF f}$};
} 
\ar[d, "\clubsuit", swap ,thick, Rightarrow]
\\
~&~&~&~&~&~
\end{tikzcd}}; 
\end{tikzpicture}\]
	\[\hspace{-1.25cm}\begin{tikzpicture}[baseline= (a).base] \node[scale=.8] (a) at (0,0){
		\begin{tikzcd}
			\tikzmath{
				\draw[thick] (0,-1) -- (0, 2) arc (+180:0:.5cm) -- (1,.5)  arc (-180:0:.5cm) -- (2, 4.5);
				\roundNbox{fill=white}{(.5,3)}{.3}{0}{0}{${\eta}$};
				\roundNbox{fill=white}{(0,-.5)}{.3}{.05}{.05}{${\cF f}$};
				\filldraw[black] (1,2) circle (1.5pt);
				\filldraw[black] (1,.5) circle (1.5pt);
			}
			\ar[r, "\phi" ,thick, Rightarrow]
			\ar[d,thick, Rightarrow, "\cF(\text{cusp})"]
			& 
					\tikzmath{
			\draw[thick] (0,-1) -- (0, 2) arc (+180:0:.5cm) -- (1,.5)  arc (-180:0:.5cm) -- (2, 4.5);
			\roundNbox{fill=white}{(.5,3)}{.3}{0}{0}{${\eta}$};
			\roundNbox{fill=white}{(0,1.25)}{.3}{.05}{.05}{${\cF f}$};
			\filldraw[black] (1,2) circle (1.5pt);
			\filldraw[black] (1,.5) circle (1.5pt);
		}
			\ar[r, "\phi" ,thick, Rightarrow, shift left = 3em,  ]
			\ar[r, ,thick, Rightarrow, "\diamondsuit"]
			\ar[r, thick, Rightarrow, shift right = 3em, "(\delta^{-1})_f" ]
			\ar[r, "{+}"{description}, shift left = 2em, draw=none]
			\ar[r, "{+}"{description},  shift right = 1em, draw=none]
			& 
			\tikzmath{
				\draw[thick] (0,-1) -- (0, 2) arc (+180:0:.5cm) -- (1,.5)  arc (-180:0:.5cm) -- (2, 4.5);
				\roundNbox{fill=white}{(.5,3)}{.3}{0}{0}{${\eta}$};
				\filldraw[black] (1,2) circle (1.5pt);
				\filldraw[black] (1,.5) circle (1.5pt);
				\roundNbox{fill=white}{(1,1.1)}{.3}{.2}{.2}{${\cF(^*f)}$};
			} 
			\ar[r, thick, Rightarrow, "\delta_f"]
			& 
			\tikzmath{
					\draw[thick] (0,-1) -- (0, 2) arc (+180:0:.5cm) -- (1,.5)  arc (-180:0:.5cm) -- (2, 4.5);
	\roundNbox{fill=white}{(.5,3)}{.3}{0}{0}{${\eta}$};

	\filldraw[black] (1,2) circle (1.5pt);
				\filldraw[black] (1,1.5) circle (1.5pt);
				\roundNbox{fill=white}{(1,1)}{.3}{.2}{.2}{${^*\cF f}$};
			} 
			\ar[r, "\diamondsuit", Rightarrow]
			& 
			\tikzmath{
							\draw[thick] (0,-1) -- (0, 2) arc (+180:0:.5cm) -- (1,.5)  arc (-180:0:.5cm) -- (2, 4.5);
			\roundNbox{fill=white}{(.5,3)}{.3}{0}{0}{${\eta}$};
			\filldraw[black] (1,2) circle (1.5pt);
				\filldraw[black] (1,1.5) circle (1.5pt);
				\roundNbox{fill=white}{(2,1)}{.3}{.05}{.05}{${\cF f}$};
			} 
			\ar[r, "\phi", Rightarrow]
			& 
			\tikzmath{
				\draw[thick] (0,-1) -- (0, 2) arc (+180:0:.5cm) -- (1,.5)  arc (-180:0:.5cm) -- (2, 4.5);
\roundNbox{fill=white}{(.5,3)}{.3}{0}{0}{${\eta}$};

\filldraw[black] (1,2) circle (1.5pt);
				\filldraw[black] (1,.5) circle (1.5pt);
				\roundNbox{fill=white}{(2,3.5)}{.3}{.05}{.05}{${\cF f}$};
			} 
			\ar[d,thick, Rightarrow, "\cF(\text{cusp})"]
			\\
		    \tikzmath{
	\draw[thick] (1,0) -- (1, 2);
	\roundNbox{fill=white}{(1,.5)}{.3}{.05}{.05}{${\cF f}$};
	\roundNbox{fill=white}{(0,1.5)}{.3}{0}{0}{${\eta}$};
}
		\ar[rrrrr, "\phi", thick, Rightarrow]&~&~&~&~&
		    \tikzmath{
		    \draw[thick] (1,0) -- (1, 2);
		    \roundNbox{fill=white}{(1,1.5)}{.3}{.05}{.05}{${\cF f}$};
		    \roundNbox{fill=white}{(0,.5)}{.3}{0}{0}{${\eta}$};
	    	}
		\end{tikzcd}}; 
\end{tikzpicture}\]

The morphisms $\diamondsuit_\star$ are the images of the 2-morphism moving $f$ around a cup in $\cC$. 

The top-right path of this diagram is the naturator for $\mu \circ (1 \boxtimes \cS) \circ \Delta(\eta)$, followed by $\Omega_X$, and the other outside path is $\Omega_Y$ followed by the interchanger, i.e the naturator for $\iota \circ \epsilon(\eta)$. We now turn our attention to why this diagram commutes. 

All regions in the between the first and second row commute by naturality of the interchanger and the tensorator for $\cQ$; some even by locality. The passage from the second line to the third may be initially disorienting; the definition of ${\cQ_J(\eta)}$ was used to simplify some of the resulting diagrams. In any case, the first, third, fourth and fifth regions between these rows commute by locality. The second follows from the definitions of $\clubsuit$ and $\diamondsuit_\star$. The first square between the third and 4th row commutes trivially, the second by naturality of $J$ and $\eta$, and the remainder by locality.  The regions between fourth and fifth row commute for the same reasons as those between the second and third. The bottom region commutes upon canceling the morphisms $\delta_f$, expanding the definition of $\diamondsuit$, and using naturality of the interchanger. 
\end{proof}
\begin{prop} \label{2AntipodeAxiomNaturality}
	The modifications $\Omega(\eta)$ are natural in $\eta$, i.e form a natural transformation $\mu \circ (1 \boxtimes \cS) \circ \delta \Rightarrow \iota \circ \epsilon(\eta)$. 
\end{prop}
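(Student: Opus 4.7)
The plan is to show the naturality square
\[
\iota\epsilon(\Theta) \circ \Omega(\eta) \;=\; \Omega(\eta') \circ (\mu \circ (1 \boxtimes \cS) \circ \Delta)(\Theta)
\]
by building a rectangular pasting diagram whose six horizontal strips are the six constituent 2-morphisms in the definition \eqref{OmegaDefinition} of $\Omega$, with the two columns indexed by $\eta$ and $\eta'$ and joined along each strip by $\Theta$. This mirrors the layout of the pasting diagram in the proof of Proposition \ref{2AntipodeAxiomVerification}, but with the 3-cell $\Theta$ playing the role that the 1-cell $\cF(f)$ played there.

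Each interior square then commutes for an essentially tautological reason. The interchanger strips commute because interchangers are natural in their 2-cell arguments. The Sweedler strip commutes by naturality of $\spadesuit$ in $\eta$, which was built into its construction in \eqref{2SweedlerNotation} as a natural isomorphism between the two functors $\eta \mapsto \bigoplus \eta_{(1)} \boxtimes \eta_{(2)}$ and $\cQ(\otimes, J)$ on $\End(\cF)$. The strips involving only the image of $\cF$, the duality unit $\delta$, the $\clubsuit$-isomorphism, and the cusp 2-morphism do not reference $\eta$ at all, so they commute with $\Theta$ trivially. Finally, the strip that slides $\eta$ past the tensorator $J$ commutes with $\Theta$ because $\Theta$, being a modification, is compatible with the naturator of $J$.

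Because $\Theta$ is a 3-cell rather than a 1-cell like $\cF(f)$, the diagram is strictly simpler than the one in Proposition \ref{2AntipodeAxiomVerification}: there are no $\cF({}^*f)$-reorganizations, no $\delta$-naturality squares, and no cusp-transfer $\diamondsuit$-moves to perform. Every interior square reduces to the assertion ``this step of $\Omega$ is natural in $\eta$'', and the enumeration above checks each case.

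The only non-trivial ingredient is the naturality of the Sweedler isomorphism $\spadesuit$, and this was guaranteed the moment $\spadesuit$ was constructed via the chosen pseudoinverse $\tilde K$ in \eqref{2SweedlerNotation}. There is no conceptual obstacle beyond the bookkeeping of the resulting pasting diagram.
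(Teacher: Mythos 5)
Your proposal is correct and matches the paper's proof essentially verbatim: both assemble a two-row pasting diagram with rows indexed by $\eta$ and $\eta'$ joined by $\Theta$, and both justify the interior squares by naturality of the Sweedler isomorphism and the interchanger, locality for the cells not referencing $\eta$, the definition of $\cQ(\otimes,J)(\Theta)$, and the modification axiom for $\Theta$. One small imprecision: the square for the step sliding $\eta$ past $\cF(\ev)$ commutes because $\Theta$ is compatible with the naturators of $\eta$ and $\eta'$ (the modification axiom applied at $\ev$), not because of any compatibility with the naturator of $J$ --- but this is the ingredient you are clearly invoking, so the argument stands.
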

\begin{proof}
We have the diagram: 
\[\hspace{-1.25cm}\begin{tikzpicture}[baseline= (a).base] \node[scale=.8] (a) at (0,0){
		\begin{tikzcd}
				\tikzmath{
				\draw[thick] (0,-2) -- (0, 2.5)  arc (180:0:.5cm) -- (1,.5)  arc (-180:0:.5cm) -- (2, 4);
				\roundNbox{fill=white}{(1,1)}{.3}{.05}{.05}{${\eta_{(2)}}$};
				\roundNbox{fill=white}{(0,-.5)}{.3}{.05}{.05}{${\eta_{(1)}}$};
				\roundNbox{fill=white}{(0,-1.5)}{.3}{.05}{.05}{${\cF f}$};
				\filldraw[black] (1,1.5) circle (1.5pt);
				\filldraw[black] (1,.5) circle (1.5pt);
			}
			\ar[r, "\spadesuit" ,thick, Rightarrow, shift left = 1em]
			\ar[r, "{+}"{description}, draw=none]
			\ar[r, swap, "\phi",thick, Rightarrow, shift right = 1em]
			\ar[d,thick, Rightarrow, shift left = 1em,  "\Theta_{(2)}"]
			\ar[d, "{+}"{description}, draw=none]
			\ar[d,thick, Rightarrow, shift right = 1em, swap, "\Theta_{(1)}"]
			&
			\tikzmath{
				\draw[thick] (0,-2) -- (0, 2.5)  arc (180:0:.5cm) -- (1,.5)  arc (-180:0:.5cm) -- (2, 4);
				\roundNbox{fill=white}{(.5,1.1)}{.4}{.5}{.5}{${{\cQ_J(\eta)}}$};
				\roundNbox{fill=white}{(0,-.5)}{.3}{.05}{.05}{${\cF f}$};
				\filldraw[black] (1,1.7) circle (1.5pt);
				\filldraw[black] (1,.5) circle (1.5pt);
			}
			\ar[r, "\clubsuit" ,thick, Rightarrow]
			\ar[d ,thick, Rightarrow, "\cQ_J(\Theta)"]
			& 
			\tikzmath{
				\draw[thick] (0,-2) -- (0, 1);
				\draw[thick] (1, 1)-- (1,.5)  arc (-180:0:.5cm) -- (2, 4);
				\draw[thick] (.3, 1) -- (.3,3);
				\draw[thick] (.7, 1) -- (.7,3);
				\roundNbox{fill=white}{(.5,2.25)}{.3}{.5}{.5}{${\eta}$};
				\roundNbox{fill=white}{(.5,3.25)}{.4}{.5}{.5}{${\cF(\ev)}$};
				\roundNbox{fill=white}{(.5,1.1)}{.3}{.5}{.5}{${J}$};
				\roundNbox{fill=white}{(0,-.5)}{.3}{.05}{.05}{${\cF f}$};
				\filldraw[black] (1,.5) circle (1.5pt);
			}
		\ar[r, Rightarrow, "\eta_{\ev}"]
					\ar[d ,thick, Rightarrow, "\Theta"]
		&
		\tikzmath{
			\draw[thick] (0,-2) -- (0, 1);
			\draw[thick] (1, 1)-- (1,.5)  arc (-180:0:.5cm) -- (2, 4);
			\draw[thick] (.3, 1) -- (.3,2);
			\draw[thick] (.7, 1) -- (.7,2);
			\roundNbox{fill=white}{(.5,3.25)}{.3}{0}{0}{${\eta}$};
			\roundNbox{fill=white}{(.5,2.25)}{.4}{.5}{.5}{${\cF(\ev)}$};
			\roundNbox{fill=white}{(.5,1.1)}{.3}{.5}{.5}{${J}$};
			\roundNbox{fill=white}{(0,-.5)}{.3}{.05}{.05}{${\cF f}$};
			\filldraw[black] (1,.5) circle (1.5pt);
		} 
		\ar[r, "\clubsuit", Rightarrow]
							\ar[d ,thick, Rightarrow, "\Theta"]
		&
		\tikzmath{
			\draw[thick] (0,-2) -- (0, 2) arc (+180:0:.5cm) -- (1,.5)  arc (-180:0:.5cm) -- (2, 4);
			\roundNbox{fill=white}{(.5,3)}{.3}{0}{0}{${\eta}$};
			\roundNbox{fill=white}{(0,-.5)}{.3}{.05}{.05}{${\cF f}$};
			\filldraw[black] (1,2) circle (1.5pt);
			\filldraw[black] (1,.5) circle (1.5pt);
		} 
		\ar[r, Rightarrow, "F(\text{cusp})"]
							\ar[d ,thick, Rightarrow, "\Theta"]
		& 
		\tikzmath{
			\draw[thick] (1, -2) -- (1, 4);
			\roundNbox{fill=white}{(0,2.333)}{.3}{0}{0}{${\eta}$};
			\roundNbox{fill=white}{(1,.667)}{.3}{.05}{.05}{${\cF f}$};
		}
	   \ar[d ,thick, Rightarrow, "\Theta"]
		\\ 
	   \tikzmath{
	   	\draw[thick] (0,-2) -- (0, 2.5)  arc (180:0:.5cm) -- (1,.5)  arc (-180:0:.5cm) -- (2, 4);
	   	\roundNbox{fill=white}{(1,1)}{.3}{.05}{.05}{${\eta'_{(2)}}$};
	   	\roundNbox{fill=white}{(0,-.5)}{.3}{.05}{.05}{${\eta'_{(1)}}$};
	   	\roundNbox{fill=white}{(0,-1.5)}{.3}{.05}{.05}{${\cF f}$};
	   	\filldraw[black] (1,1.5) circle (1.5pt);
	   	\filldraw[black] (1,.5) circle (1.5pt);
	   }
	   \ar[r, "\spadesuit" ,thick, Rightarrow, shift left = 1em]
	   \ar[r, "{+}"{description}, draw=none]
	   \ar[r, swap, "\phi",thick, Rightarrow, shift right = 1em]
	   &
	   \tikzmath{
	   	\draw[thick] (0,-2) -- (0, 2.5)  arc (180:0:.5cm) -- (1,.5)  arc (-180:0:.5cm) -- (2, 4);
	   	\roundNbox{fill=white}{(.5,1.1)}{.4}{.5}{.5}{${\cQ_J(\eta')}$};
	   	\roundNbox{fill=white}{(0,-.5)}{.3}{.05}{.05}{${\cF f}$};
	   	\filldraw[black] (1,1.7) circle (1.5pt);
	   	\filldraw[black] (1,.5) circle (1.5pt);
	   }
	   \ar[r, "\clubsuit" ,thick, Rightarrow]
	   & 
	   \tikzmath{
	   	\draw[thick] (0,-2) -- (0, 1);
	   	\draw[thick] (1, 1)-- (1,.5)  arc (-180:0:.5cm) -- (2, 4);
	   	\draw[thick] (.3, 1) -- (.3,3);
	   	\draw[thick] (.7, 1) -- (.7,3);
	   	\roundNbox{fill=white}{(.5,2.25)}{.3}{.5}{.5}{${\eta'}$};
	   	\roundNbox{fill=white}{(.5,3.25)}{.4}{.5}{.5}{${\cF(\ev)}$};
	   	\roundNbox{fill=white}{(.5,1.1)}{.3}{.5}{.5}{${J}$};
	   	\roundNbox{fill=white}{(0,-.5)}{.3}{.05}{.05}{${\cF f}$};
	   	\filldraw[black] (1,.5) circle (1.5pt);
	   }
	   \ar[r, Rightarrow, "\eta'_{\ev}"]
	   &
	   \tikzmath{
	   	\draw[thick] (0,-2) -- (0, 1);
	   	\draw[thick] (1, 1)-- (1,.5)  arc (-180:0:.5cm) -- (2, 4);
	   	\draw[thick] (.3, 1) -- (.3,2);
	   	\draw[thick] (.7, 1) -- (.7,2);
	   	\roundNbox{fill=white}{(.5,3.25)}{.3}{0}{0}{${\eta'}$};
	   	\roundNbox{fill=white}{(.5,2.25)}{.4}{.5}{.5}{${\cF(\ev)}$};
	   	\roundNbox{fill=white}{(.5,1.1)}{.3}{.5}{.5}{${J}$};
	   	\roundNbox{fill=white}{(0,-.5)}{.3}{.05}{.05}{${\cF f}$};
	   	\filldraw[black] (1,.5) circle (1.5pt);
	   } 
	   \ar[r, "\clubsuit", Rightarrow]
	   &
	   \tikzmath{
	   	\draw[thick] (0,-2) -- (0, 2) arc (+180:0:.5cm) -- (1,.5)  arc (-180:0:.5cm) -- (2, 4);
	   	\roundNbox{fill=white}{(.5,3)}{.3}{0}{0}{${\eta'}$};
	   	\roundNbox{fill=white}{(0,-.5)}{.3}{.05}{.05}{${\cF f}$};
	   	\filldraw[black] (1,2) circle (1.5pt);
	   	\filldraw[black] (1,.5) circle (1.5pt);
	   } 
	   \ar[r, Rightarrow, "F(\text{cusp})"]
	   & 
	   \tikzmath{
	   	\draw[thick] (1, -2) -- (1, 4);
	   	\roundNbox{fill=white}{(0,2.333)}{.3}{0}{0}{${\eta'}$};
	   	\roundNbox{fill=white}{(1,.667)}{.3}{.05}{.05}{${\cF f}$};
	   }
	   \\ 
		\end{tikzcd}}; 
\end{tikzpicture}\]
Here we have been moderately abusive by denoting all the local applications of $\Theta$ with the same label. The leftmost vertical arrow is $\mu \circ (1 \boxtimes \cS) \circ \Delta (\Theta)$, and the rightmost is $\iota \circ \epsilon(\Theta)$. The first region commutes by naturality of the Sweedler isomorphism and the interchanger. The second region commutes by locality and the definition of $\cQ_J(\Theta) := \cQ(J, \otimes)(\Theta)$. The last three regions are commutative since $\Theta$ is a modification. 
\end{proof}
\clearpage
\printbibliography

@article{MR4239374,
	author = {Kong, Liang and Tian, Yin and Zhang, Zhi-Hao},
	date-added = {2022-06-17 17:22:46 -0400},
	date-modified = {2022-06-17 17:22:46 -0400},
	doi = {10.1007/jhep12(2020)078},
	issn = {1126-6708},
	journal = {Journal of High Energy Physics},
	month = {Dec},
	mrclass = {82D03 (18M20 81V27)},
	mrnumber = {4239374},
	note = {\mathscinet{MR4239374} \arxiv{2009.06564} \doi{10.1007/jhep12(2020)078}},
	number = {12},
	pages = {Paper No. 078, 32},
	publisher = {Springer Science and Business Media {LLC}},
	title = {Defects in the 3-dimensional toric code model form a braided fusion 2-category},
	url = {https://doi.org/10.1007%2Fjhep12%282020%29078},
	volume = {2020},
	year = 2020,
	bdsk-url-1 = {https://doi.org/10.1007%2Fjhep12%282020%29078},
	bdsk-url-2 = {https://doi.org/10.1007/jhep12(2020)078}}

@article{weakfusion,
	author = {Thibault D. D{\'{e}}coppet},
	date-added = {2022-06-17 17:22:16 -0400},
	date-modified = {2022-06-17 17:22:16 -0400},
	doi = {https://doi.org/10.48550/arXiv.2103.15150},
	fjournal = {Cahiers de Topologie et G\'eom\'etrie Diff\'erentielle},
	issn = {0008-0004},
	journal = {Cahiers Topologie G\'eom. Diff\'erentielle},
	note = {\doi{https://doi.org/10.48550/arXiv.2103.15150} \arxiv{2103.15150}},
	number = {1},
	pages = {3--24},
	title = {Weak fusion 2-categories},
	volume = {63},
	year = {2022},
	bdsk-url-1 = {https://doi.org/10.48550/arXiv.2103.15150}}

@article{MR4372801,
	author = {D{\'{e}}coppet, Thibault D.},
	date-added = {2022-03-23 12:04:41 -0400},
	date-modified = {2022-08-22 16:13:53 -0400},
	doi = {10.1016/j.jpaa.2022.107029},
	fjournal = {Journal of Pure and Applied Algebra},
	issn = {0022-4049},
	journal = {J. Pure Appl. Algebra},
	mrclass = {18N10 (18M20 18N25)},
	mrnumber = {4372801},
	note = {\mathscinet{MR4372801} \doi{10.1016/j.jpaa.2022.107029} \arxiv{2012.15774}},
	number = {8},
	pages = {Paper No. 107029, 16},
	title = {Multifusion categories and finite semisimple 2-categories},
	url = {https://doi.org/10.1016/j.jpaa.2022.107029},
	volume = {226},
	year = {2022},
	bdsk-url-1 = {https://doi.org/10.1016/j.jpaa.2022.107029}}

@article{PhysRevResearch.2.043086,
	author = {Kong, Liang and Lan, Tian and Wen, Xiao-Gang and Zhang, Zhi-Hao and Zheng, Hao},
	date-added = {2021-09-13 13:31:32 -0400},
	date-modified = {2021-09-13 13:31:54 -0400},
	doi = {10.1103/PhysRevResearch.2.043086},
	issue = {4},
	journal = {Phys. Rev. Research},
	month = {Oct},
	note = {\doi{10.1103/PhysRevResearch.2.043086} \arxiv{2005.14178}},
	numpages = {53},
	pages = {043086},
	publisher = {American Physical Society},
	title = {Algebraic higher symmetry and categorical symmetry: A holographic and entanglement view of symmetry},
	url = {https://link.aps.org/doi/10.1103/PhysRevResearch.2.043086},
	volume = {2},
	year = {2020},
	bdsk-url-1 = {https://link.aps.org/doi/10.1103/PhysRevResearch.2.043086},
	bdsk-url-2 = {https://doi.org/10.1103/PhysRevResearch.2.043086}}

@misc{1509.06811,
	author = {Bruce Bartlett and Christopher L. Douglas and Christopher J. Schommer-Pries and Jamie Vicary},
	date-added = {2020-10-03 15:02:22 -0400},
	date-modified = {2020-10-03 15:03:46 -0400},
	note = {\arxiv{1509.06811}},
	title = {Modular categories as representations of the 3-dimensional bordism 2-category},
	year = {2015}}

@book{MR3076451,
	author = {Gurski, Nick},
	date-added = {2019-12-12 09:14:14 -0500},
	date-modified = {2019-12-12 09:14:40 -0500},
	doi = {10.1017/CBO9781139542333},
	isbn = {978-1-107-03489-1},
	mrclass = {18D05 (18-02)},
	mrnumber = {3076451},
	mrreviewer = {Josep Elgueta},
	note = {\mathscinet{MR3076451} \doi{10.1017/CBO9781139542333}},
	pages = {viii+278},
	publisher = {Cambridge University Press, Cambridge},
	series = {Cambridge Tracts in Mathematics},
	title = {Coherence in three-dimensional category theory},
	url = {https://doi.org/10.1017/CBO9781139542333},
	volume = {201},
	year = {2013},
	bdsk-url-1 = {https://doi.org/10.1017/CBO9781139542333}}

@article{MR1261589,
	author = {Gordon, R. and Power, A. J. and Street, Ross},
	date-added = {2019-10-10 12:41:17 -0400},
	date-modified = {2019-10-10 12:41:33 -0400},
	doi = {10.1090/memo/0558},
	fjournal = {Memoirs of the American Mathematical Society},
	issn = {0065-9266},
	journal = {Mem. Amer. Math. Soc.},
	mrclass = {18D05},
	mrnumber = {1261589},
	mrreviewer = {Kimmo I. Rosenthal},
	note = {\mathscinet{MR1261589} \doi{10.1090/memo/0558}},
	number = {558},
	pages = {vi+81},
	title = {Coherence for tricategories},
	url = {https://doi.org/10.1090/memo/0558},
	volume = {117},
	year = {1995},
	bdsk-url-1 = {https://doi.org/10.1090/memo/0558}}

@misc{DR18,
	author = {Christopher L. Douglas and David J. Reutter},
	date-added = {2019-10-10 12:20:58 -0400},
	date-modified = {2019-10-10 12:21:46 -0400},
	note = {\arxiv{1812.11933}},
	title = {Fusion 2-categories and a state-sum invariant for 4-manifolds},
	year = {2018}}

@book{MR3242743,
	author = {Etingof, Pavel and Gelaki, Shlomo and Nikshych, Dmitri and Ostrik, Victor},
	date-added = {2017-12-13 17:52:52 +0000},
	date-modified = {2017-12-13 17:53:14 +0000},
	isbn = {978-1-4704-2024-6},
	mrclass = {18D10 (16T05)},
	mrnumber = {3242743},
	mrreviewer = {Julien Bichon},
	note = {\mathscinet{MR3242743} \doi{10.1090/surv/205}},
	pages = {xvi+343},
	publisher = {American Mathematical Society, Providence, RI},
	series = {Mathematical Surveys and Monographs},
	title = {Tensor categories},
	url = {https://doi.org/10.1090/surv/205},
	volume = {205},
	year = {2015},
	bdsk-url-1 = {https://doi.org/10.1090/surv/205}}

@article{MR2480712,
	author = {Nikshych, Dmitri},
	coden = {SMATF6},
	date-added = {2013-07-12 15:06:58 +0000},
	date-modified = {2013-07-12 15:07:24 +0000},
	doi = {10.1007/s00029-008-0060-1},
	fjournal = {Selecta Mathematica. New Series},
	issn = {1022-1824},
	journal = {Selecta Math. (N.S.)},
	mrclass = {16W30 (18D10)},
	mrnumber = {2480712 (2009k:16075)},
	mrreviewer = {Ram{\'o}n Gonz{\'a}lez Rodr{\'{\i}}guez},
	note = {\mathscinet{MR2480712}, \doi{10.1007/s00029-008-0060-1}},
	number = {1},
	pages = {145--161},
	title = {Non-group-theoretical semisimple {H}opf algebras from group actions on fusion categories},
	url = {http://dx.doi.org/10.1007/s00029-008-0060-1},
	volume = {14},
	year = {2008},
	bdsk-url-1 = {http://dx.doi.org/10.1007/s00029-008-0060-1}}

@article{MR2677836,
	author = {Etingof, Pavel and Nikshych, Dmitri and Ostrik, Victor},
	date-added = {2013-01-22 02:51:58 +0000},
	date-modified = {2014-02-17 22:52:27 +0000},
	doi = {10.4171/QT/6},
	fjournal = {Quantum Topology},
	issn = {1663-487X},
	journal = {Quantum Topol.},
	mrclass = {18D10 (55S35)},
	mrnumber = {2677836 (2011h:18007)},
	mrreviewer = {Juan Mart{\'{\i}}n Mombelli},
	note = {With an appendix by Ehud Meir, \mathscinet{2677836} \doi{10.4171/QT/6} \arxiv{0909.3140}},
	number = {3},
	pages = {209--273},
	title = {Fusion categories and homotopy theory},
	url = {http://dx.doi.org/10.4171/QT/6},
	volume = {1},
	year = {2010},
	bdsk-url-1 = {http://dx.doi.org/10.4171/QT/6}}

@article{MR1010160,
	author = {Doplicher, Sergio and Roberts, John E.},
	coden = {INVMBH},
	date-added = {2010-08-19 15:38:21 +0900},
	date-modified = {2010-08-19 15:38:21 +0900},
	fjournal = {Inventiones Mathematicae},
	issn = {0020-9910},
	journal = {Invent. Math.},
	mrclass = {22D25 (22D35 46L60 81E05)},
	mrnumber = {MR1010160 (90k:22005)},
	mrreviewer = {G. A. Reid},
	note = {\mathscinet{MR1010160} \doi{10.1007/BF01388849}},
	number = {1},
	pages = {157--218},
	title = {A new duality theory for compact groups},
	volume = {98},
	year = {1989}}

@phdthesis{schommer-pries-thesis,
	author = {Chris Schommer-Pries},
	date-added = {2010-08-19 15:38:21 +0900},
	date-modified = {2010-08-19 15:38:21 +0900},
	school = {UC Berkeley},
	title = {The Classification of Two-Dimensional Extended Topological Field Theories},
	year = {2009}}

@article{DECOPPET2022107029,
title = {Multifusion categories and finite semisimple 2-categories},
journal = {Journal of Pure and Applied Algebra},
volume = {226},
number = {8},
pages = {107029},
year = {2022},
issn = {0022-4049},
doi = {https://doi.org/10.1016/j.jpaa.2022.107029},
url = {https://www.sciencedirect.com/science/article/pii/S0022404922000251},
author = {Thibault D. Décoppet},
keywords = {Multifusion categories, Finite semisimple 2-categories, Karoubi envelopes, Cauchy completions},
abstract = {We give a 3-universal property for the Karoubi envelope of a 2-category. Using this, we show that the 3-categories of finite semisimple 2-categories (as introduced in [1]) and of multifusion categories are equivalent.}
}

@misc{décoppet20212deligne,
      title={The 2-Deligne Tensor Product}, 
      author={Thibault D. Décoppet},
      journal = {Kyoto Journal of Math.},
      year={2021},
      eprint={2103.16880},
      archivePrefix={arXiv},
      primaryClass={math.QA}
}

@misc{pstrągowski2014dualizable,
      title={On dualizable objects in monoidal bicategories, framed surfaces and the Cobordism Hypothesis}, 
      author={Piotr Pstrągowski},
      year={2014},
      eprint={1411.6691},
      archivePrefix={arXiv},
      primaryClass={math.AT}
}

@misc{chen2023categorified,
      title={Categorified Quantum Groups and Braided Monoidal 2-Categories}, 
      author={Hank Chen and Florian Girelli},
      year={2023},
      eprint={2304.07398},
      archivePrefix={arXiv},
      primaryClass={math.QA}
}

@article{DAY199799,
title = {Monoidal Bicategories and Hopf Algebroids},
journal = {Advances in Mathematics},
volume = {129},
number = {1},
pages = {99-157},
year = {1997},
issn = {0001-8708},
doi = {https://doi.org/10.1006/aima.1997.1649},
url = {https://www.sciencedirect.com/science/article/pii/S0001870897916492},
author = {Brian Day and Ross Street}
}

@misc{huang2023tannakakrein,
      title={Tannaka-Krein duality for finite 2-groups}, 
      author={Mo Huang and Zhi-Hao Zhang},
      year={2023},
      eprint={2305.18151},
      archivePrefix={arXiv},
      primaryClass={math.QA}
}

@article{Tannaka,
  title={\"Uber den Dualit\"atssatz der nichtkommutativen topologischen Gruppen},
  author={Tadao Tannaka},
  journal={Tohoku Mathematical Journal, First Series},
  volume={45},
  number={ },
  pages={1-12},
  year={1939},
  doi={}
}

@inproceedings{kreuin1949principle,
  title={A principle of duality for bicompact groups and quadratic block algebras},
  author={Krein, M},
  booktitle={Doklady Akad. Nauk SSSR (NS)},
  volume={69},
  pages={725--728},
  year={1949}
}

@article{crane1994,
	doi = {10.1063/1.530746},
	
	url = {https://doi.org/10.1063%2F1.530746},
	
	year = 1994,
	month = {oct},
	
	publisher = {{AIP} Publishing},
	
	volume = {35},
	
	number = {10},
	
	pages = {5136--5154},
	
	author = {Louis Crane and Igor B. Frenkel},
	
	title = {Four-dimensional topological quantum field theory, Hopf categories, and the canonical bases},
	
	journal = {Journal of Mathematical Physics}
}

@phdthesis{neuchl,
	title={Representation theory of Hopf categories},
	author={Neuchl, Martin},
	year={1997},
	school={Verlag nicht ermittelbar}
}

@article{PhysRevB.96.045136,
	title = {Cheshire charge in (3+1)-dimensional topological phases},
	author = {Else, Dominic V. and Nayak, Chetan},
	journal = {Phys. Rev. B},
	volume = {96},
	issue = {4},
	pages = {045136},
	numpages = {17},
	year = {2017},
	month = {Jul},
	publisher = {American Physical Society},
	doi = {10.1103/PhysRevB.96.045136},
	url = {https://link.aps.org/doi/10.1103/PhysRevB.96.045136}
}

@misc{DecYu,
	title={Fiber 2-Functors and Tambara-Yamagami Fusion 2-Categories}, 
	author={Thibault D. Décoppet and Matthew Yu},
	year={2023},
	eprint={2306.08117},
	archivePrefix={arXiv},
	primaryClass={math.CT}
}

@article{JFR23,
	doi = {10.1090/jams/1023},
	
	url = {https://doi.org/10.1090%2Fjams%2F1023},
	
	year = 2023,
	month = {jan},
	
	publisher = {American Mathematical Society ({AMS})},
	
	author = {Theo Johnson-Freyd and David Reutter},
	
	title = {Minimal nondegenerate extensions}
}

@article{Pfeiffer_2007,
	doi = {10.1016/j.aim.2006.09.014},
	
	url = {https://doi.org/10.1016%2Fj.aim.2006.09.014},
	
	year = 2007,
	month = {jun},
	
	publisher = {Elsevier {BV}
	},
	
	volume = {212},
	
	number = {1},
	
	pages = {62--108},
	
	author = {Hendryk Pfeiffer},
	
	title = {2-Groups, trialgebras and their Hopf categories of representations},
	
	journal = {Advances in Mathematics}
}

@book{Schauenburg1992TannakaDF,
title={Tannaka Duality for Arbitrary Hopf Algebras},
author={Schauenburg, P.},
isbn={9783889271006},
lccn={93139895},
series={Algebra Berichte},
url={https://books.google.com/books?id=fq0rAAAAYAAJ},
year={1992},
publisher={R. Fischer}
}

@unpublished{JFRUpcoming,
	title = {Higher Algebraic Closure},
	author = {Theo Johnson-Freyd and David Reutter},
	note = {In preparation.},
}

@book{rivano1972categories,
	title={Categories Tannakiennes},
	author={Rivano, N.S.},
	series={Lecture notes in mathematics, n.265},
	url={https://books.google.com/books?id=\_JuungEACAAJ},
	year={1972},
	publisher={Springer}
}

@article{Jones_2022,
	doi = {10.1112/jlms.12687},
	
	url = {https://doi.org/10.1112%2Fjlms.12687},
	
	year = 2022,
	month = {nov},
	
	publisher = {Wiley},
	
	volume = {107},
	
	number = {1},
	
	pages = {333--406},
	
	author = {Corey Jones and David Penneys and David Reutter},
	
	title = {A 3-categorical perspective on G-crossed braided categories},
	
	journal = {Journal of the London Mathematical Society}
}

@book{johnson20202dimensional,
author = {Johnson, Niles and Yau, Donald},
title = "{2-Dimensional Categories}",
publisher = {Oxford University Press},
year = {2021},
month = {01},
abstract = "{2-Dimensional Categories provides an introduction to 2-categories and bicategories, assuming only the most elementary aspects of category theory. A review of basic category theory is followed by a systematic discussion of 2-/bicategories; pasting diagrams; lax functors; 2-/bilimits; the Duskin nerve; the 2-nerve; internal adjunctions; monads in bicategories; 2-monads; biequivalences; the Bicategorical Yoneda Lemma; and the Coherence Theorem for bicategories. Grothendieck fibrations and the Grothendieck construction are discussed next, followed by tricategories, monoidal bicategories, the Gray tensor product, and double categories. Completely detailed proofs of several fundamental but hard-to-find results are presented for the first time. With exercises and plenty of motivation and explanation, this book is useful for both beginners and experts.}",
isbn = {9780198871378},
doi = {10.1093/oso/9780198871378.001.0001},
url = {https://doi.org/10.1093/oso/9780198871378.001.0001},
}

@misc{decoppet2023finite,
	title={Finite Semisimple Module 2-Categories}, 
	author={Thibault D. Décoppet},
	year={2023},
	eprint={2107.11037},
	archivePrefix={arXiv},
	primaryClass={math.QA}
}

@Inbook{Deligne2007,
	author="Deligne, P.",
	editor="Cartier, Pierre
	and Katz, Nicholas M.
	and Manin, Yuri I.
	and Illusie, Luc
	and Laumon, G{\'e}rard
	and Ribet, Kenneth A.",
	title="Cat{\'e}gories tannakiennes",
	bookTitle="The Grothendieck Festschrift: A Collection of Articles Written in Honor of the 60th Birthday of Alexander Grothendieck",
	year="2007",
	publisher="Birkh{\"a}user Boston",
	address="Boston, MA",
	pages="111--195",
	abstract="Dans [6], N. Saavedra d{\'e}crit certaines cat{\'e}gories munies d'un produit tensoriel, les cat{\'e}gories tannakiennes (2.8), comme cat{\'e}gories de repr{\'e}sentations de gerbes (cas particulier: repr{\'e}sentations d'un sch{\'e}ma en groupes). Sa d{\'e}monstration est incompl{\`e}te (cf. [2] 3.15). Notre but premier est de la compl{\'e}ter. Je n'ai pas su r{\'e}diger un expos{\'e} court ne donnant que les arguments manquants: bien des id{\'e}es de l'article sont dans [6], dues {\`a} Saavedra et, par son interm{\'e}diaire, {\`a} A. Grothendieck.",
	isbn="978-0-8176-4575-5",
	doi="10.1007/978-0-8176-4575-5_3",
	url="https://doi.org/10.1007/978-0-8176-4575-5_3"
}

@article{Woronowicz1988,
	author = {Woronowicz, S.L.},
	journal = {Inventiones mathematicae},
	keywords = {monoidal -category; Tannaka-Krein duality theorem; matrix pseudogroups},
	number = {1},
	pages = {35-76},
	title = {Tannaka-Krein duality for compact matrix pseudogroups. Twisted SU (N) groups.},
	url = {http://eudml.org/doc/143589},
	volume = {93},
	year = {1988},
}

@inproceedings{Joyal1991ANIT,
author="Joyal, Andr{\'e}
and Street, Ross",
editor="Carboni, Aurelio
and Pedicchio, Maria Cristina
and Rosolini, Guiseppe",
title="An introduction to Tannaka duality and quantum groups",
booktitle="Category Theory",
year="1991",
publisher="Springer Berlin Heidelberg",
address="Berlin, Heidelberg",
pages="413--492",
isbn="978-3-540-46435-8"
}

@article{McCrudden2000CategoriesOR,
	title={Categories of representations of coalgebroids},
	author={Paddy McCrudden},
	journal={Advances in Mathematics},
	year={2000},
	volume={154},
	pages={299-332}
}

@article{Coalgebroids2000BalancedCP,
author = {Mccrudden, Paddy},
journal = {Theory and Applications of Categories [electronic only]},
keywords = {Tannaka-Krein; comodule; coalgebras; coalgebroids; sylleptic monoidal bicategories},
language = {eng},
pages = {71-147},
publisher = {Mount Allison University, Department of Mathematics and Computer Science, Sackville},
title = {Balanced coalgebroids.},
url = {http://eudml.org/doc/120925},
volume = {7},
year = {2000},
}

@article{Baez1995HigherDA,
	title={Higher Dimensional Algebra: I. Braided Monoidal 2-Categories},
	author={John C. Baez and Martin Neuchl},
	journal={Advances in Mathematics},
	year={1995},
	volume={121},
	pages={196-244}
}

@misc{decoppet2023drinfeld,
	title={Drinfeld Centers and Morita Equivalence Classes of Fusion 2-Categories}, 
	author={Thibault D. Décoppet},
	year={2023},
	eprint={2211.04917},
	archivePrefix={arXiv},
	primaryClass={math.QA}
}

@misc{trimbleTetra,
	title={Notes on Tetracategories},
	author={Todd Trimble},
	year={2006},
	url={https://math.ucr.edu/home/baez/trimble/tetracategories.html}
}

@misc{barrett2018gray,
	title={Gray categories with duals and their diagrams}, 
	author={John W. Barrett and Catherine Meusburger and Gregor Schaumann},
	year={2018},
	eprint={1211.0529},
	archivePrefix={arXiv},
	primaryClass={math.QA}
}

@article{DAY199617,
	title = {Enriched Tannaka reconstruction},
	journal = {Journal of Pure and Applied Algebra},
	volume = {108},
	number = {1},
	pages = {17-22},
	year = {1996},
	issn = {0022-4049},
	doi = {https://doi.org/10.1016/0022-4049(95)00039-9},
	url = {https://www.sciencedirect.com/science/article/pii/0022404995000399},
	author = {Brian J. Day},
	abstract = {An abstract approach is made to recent theory on “Tannaka” recovery of coalgebras, bialgebras, and Hopf algebras. It is hoped that the methods used shed some light upon the more quantitative aspects of the mathematics involved.}
}

@book{lam2012lectures,
	title={Lectures on Modules and Rings},
	author={Lam, T.Y.},
	isbn={9781461205258},
	lccn={98018389},
	series={Graduate Texts in Mathematics},
	url={https://books.google.com/books?id=6iLUBwAAQBAJ},
	year={2012},
	publisher={Springer New York}
}
\end{document}